\documentclass[a4paper]{amsart}

\usepackage[utf8]{inputenc} 
\usepackage[british]{babel} 
\usepackage{culmus} 
\usepackage{dsfont}

\usepackage{hyperref}

\begin{hyphenrules}{british}
\end{hyphenrules}

\usepackage{graphicx} 
\usepackage{booktabs} 
\usepackage{array} 
\usepackage{paralist} 
\usepackage{verbatim} 
\usepackage{subfig} 
\usepackage{scalerel,stackengine} 
\usepackage{bmpsize}
\usepackage{amssymb}
\usepackage{mathtools}
\usepackage{amsmath}	
\usepackage{amssymb}
\usepackage{amsthm}
\usepackage[mathcal]{eucal}
\usepackage{graphicx}
\usepackage{faktor} 
\usepackage{mathrsfs} 
\usepackage[all, cmtip]{xy} 
\usepackage{systeme}
\usepackage[titletoc]{appendix}
\usepackage{leftindex}
\usepackage{enumitem}
\usepackage{appendix}
\usepackage{yhmath} 

\theoremstyle{plain}
\newtheorem{theorem}[subsection]{Theorem}
\newtheorem{lemma}[subsection]{Lemma}
\newtheorem{proposition}[subsection]{Proposition}
\newtheorem{corollary}[subsection]{Corollary}

\theoremstyle{definition}
\newtheorem{definition}[subsection]{Definition}

\theoremstyle{remark}

\newtheorem{examples}[subsection]{Examples}
\newtheorem{remark}[subsection]{Remark}
\newtheorem{remarks}[subsection]{Remarks}

\stackMath
\newcommand\reallywidehat[1]{%
\savestack{\tmpbox}{\stretchto{%
  \scaleto{%
    \scalerel*[\widthof{\ensuremath{#1}}]{\kern-.6pt\bigwedge\kern-.6pt}%
    {\rule[-\textheight/2]{1ex}{\textheight}}
  }{\textheight}%
}{0.5ex}}%
\stackon[1pt]{#1}{\tmpbox}%
}
\newcommand{\pullback}[4]{{#1 \leftindex_{#2}{\times}_{#3} #4}}
\newcommand{\rel}[1]{\mathcal{R}_{\mathbb{#1}}} 
\newcommand{\R}[1]{R_{\mathbb{E}}}
\renewcommand{\P}[1]{P_{\mathbb{E}}}
\newcommand{\Ch}[1]{\mathfrak{Ch}_{\mathbb{#1}}} 
\newcommand{\la}{\langle}
\newcommand{\ra}{\rangle}
\usepackage{color}

\DeclareSymbolFont{alephbet}{HE8}{frank}{m}{n}
\SetSymbolFont{alephbet}{bold}{HE8}{frank}{b}{n}
\DeclareMathSymbol{\samech}{\mathord}{alephbet}{"F1}
\DeclareMathSymbol{\mem}{\mathord}{alephbet}{"EE}

\usepackage{chngcntr}
\counterwithin*{equation}{section}

\def\pullback{
 \ar@{-}[]+R+<4pt,-3pt>;[]+RD+<4pt,-6pt>%
 \ar@{-}[]+D+<3pt,-6pt>;[]+RD+<4pt,-6pt>}

\def\pushout{%
  \ar@{-}[]+U+<-2pt,5pt>;[]+LU+<-4pt,5pt>
  \ar@{-}[]+L+<-4pt,2pt>;[]+LU+<-4pt,5pt>}

\begin{document}

\title[The direction of $\mathcal{R}$-full Schreier internal categories]{$\mathcal{R}$-full Schreier internal categories and their directions}

\author[S.~Ambra]{Stefano Ambra}
\address[Stefano Ambra]{Dipartimento di Matematica ``Federigo Enriques'', Universit\`{a} degli Studi di Milano, Via Saldini 50, 20133 Milano, Italy}
\email{stefano.ambra@unimi.it}

\author[A.~Montoli]{Andrea Montoli}
\address[Andrea Montoli]{Dipartimento di Matematica ``Federigo Enriques'', Universit\`{a} degli Studi di Milano, Via Saldini 50, 20133 Milano, Italy}
\email{andrea.montoli@unimi.it}

\author[D.~Rodelo]{Diana Rodelo}
\address[Diana Rodelo]{Department of Mathematics, University of the Algarve, 8005-139 Faro, Portugal and Center for Research and Development in Mathematics and Applications (CIDMA), Department of Mathematics, University of Aveiro, 3810-193 Aveiro, Portugal} \email{drodelo@ualg.pt}

\thanks{This work was supported by the Shota Rustaveli National Science Foundation of Georgia (SRNSFG), through grant FR-24-9660, ``Categorical methods for the study of cohomology theory of monoid-like structures: an approach through Schreier extensions''.
The first and second authors are members of the Gruppo Nazionale per le Strutture Algebriche, Geometriche e le loro Applicazioni (GNSAGA) dell'Istituto Nazionale di Alta Matematica ``Francesco Severi''.
The third author acknowledges financial support by CIDMA (https://ror.org/05pm2mw36)
under the Portuguese Foundation for Science and Technology
(FCT, https://ror.org/00snfqn58), Grants UID/04106/2025 (https://doi.org/10.54499/UID/04106/2025)
and UID/PRR/04106/2025 (https://doi.org/10.54499/UID/PRR/04106/2025).}

\begin{abstract}
We introduce the notion of \emph{$\mathcal{R}$-full Schreier internal category}, which is the monoid analogue of the notion of aspherical abelian groupoid. We associate with every $\mathcal{R}$-full Schreier internal category a \emph{direction}, which is a Schreier split extension with commutative and cancellative kernel. We show that this association is functorial and that such functor is a product preserving cofibration. Thanks to these properties, we equip the connected components of the fibres of such functor with canonical commutative monoid structures. Using the equivalence between Schreier internal categories and crossed semimodules of monoids, we describe these commutative monoids in terms of crossed Schreier extensions.
\end{abstract}

\subjclass[2020]{18G50, 20J06, 20M50, 18E13, 18C05}

\keywords{Schreier internal category, crossed Schreier extension of monoids, direction functor, cohomology monoids, cancellative semimodules}

\maketitle
\section{Introduction}
The classical Eilenberg-Mac Lane cohomology of groups \cite{EML} is known to admit an interpretation in terms of group extensions. Already in \cite{EML} it was shown that the second cohomology group $H^2(G, A, \varphi)$ of a group $G$ with coefficients in a $G$-module $(A, \varphi)$ is isomorphic to the abelian group $OpExt(G, A, \varphi)$ of isomorphism classes of extensions of $G$ by $A$ inducing the action $\varphi.$ Interpretations of higher cohomology groups $H^n(G, A, \varphi)$ in terms of suitable long extensions have been obtained in \cite{Holt, Huebschmann}. In particular, the elements of the third cohomology group $H^3(G, A, \varphi)$ correspond to equivalence classes of crossed extensions, namely exact sequences of groups
\[ \xymatrix{
A \ar@{>->}[r]^-k &B \ar[r]^-{\delta} &C \ar@{->>}[r]^-{\pi} &G
} \]
where the morphism $\delta,$ together with an action $\alpha$ of $C$ on $B,$ forms a crossed module, and such that the induced $G$-action on $A$ coincides with the given action $\varphi.$

These results can be described and understood categorically by means of the so-called \emph{direction functors}. The ($0$-dimensional) direction functor was introduced in \cite{bourn-direction}: it is a functor $d\colon \mathrm{AutMal}({\mathcal{C}}_g)\to\mathbf{Ab}(\mathcal{C})$, where $\mathcal{C}$ is a Barr-exact category~\cite{barr}, $\mathrm{AutMal}({\mathcal{C}}_g)$ is the category of objects with global support equipped with an autonomous Mal'tsev operation, and $\mathbf{Ab}(\mathcal{C})$ is the category of internal abelian groups in $\mathcal{C}$. Due to the good properties of $d$ (which is a conservative, finite product preserving (pseudo)-cofibration), given any $A\in \mathbf{Ab}(\mathcal{C})$, there is an induced abelian group structure on the connected components of the fibre $d^{-1}(A)$. In particular, when $\mathcal{C}=\mathbf{Gp}/G$, where $\mathbf{Gp}$ is the category of groups, $\mathrm{AutMal}({\mathcal{C}}_g)$ is the category of extensions over $G$ with abelian kernel, $\mathbf{Ab}(\mathcal{C})$ is the category of $G$-modules, and $d(\xymatrix@R=10pt{A \ar@{>->}[r]^-k & X \ar@{>>}[r]^f & G})=(A,\ast)$, where $\ast$ is the $G$-action on $A$ given by conjugation in $X$. As a consequence, the abelian group structures of all $G$-modules $(A,\varphi)$ can be lifted to their fibres, which correspond precisely to categories of extensions of $G$ with abelian kernel; thus the abelian group of connected components of $d^{-1}(A,\varphi)$ is isomorphic to $H^2(G,A,\varphi)$.

The second step in the categorical description of cohomology in terms of extensions was accomplished in \cite{aspherical}, where the $1$-dimensional direction functor was introduced: it is now a functor $d_1\colon \mathrm{AAGpd}(\mathcal{C})\to\mathbf{Ab}(\mathcal{C}),$ where $\mathrm{AAGpd}(\mathcal{C})$ is the category of aspherical abelian groupoids in $\mathcal{C}.$ As in the 0-dimensional case, given any $A\in \mathbf{Ab}(\mathcal{C})$, there is an induced abelian group structure on the set of connected components of $d_1^{-1}(A)$. Using the classical equivalence between internal groupoids in the category of groups and crossed modules (established in \cite{BS}), and again by the good properties of the direction functor $d_1$ when $\mathcal{C}=\mathbf{Gp}/G$, one obtains the canonical description of the abelian group structure on the set of equivalence classes of crossed extensions inducing the same action. Thus, given a $G$-module $(A,\varphi)$, the abelian group of connected components of $d_1^{-1}(A,\varphi)$ is isomorphic to $H^3(G,A,\varphi)$. This procedure can be iterated in the higher dimensions, giving a complete description of cohomology in terms of suitable long extensions (see \cite{Bourn-Rodelo}).

In \cite{AMR}, we constructed a variation of the $0$-dimensional direction functor, resulting in a new functor $d,$ suitable to give a categorical description of low-dimensional cohomology of monoids in terms of appropriate extensions. The domain of $d$ is the category $cc\text{-}SExt_M$ whose objects are the Schreier extensions (in the sense of \cite{Redei}) of a monoid $M,$ with commutative and cancellative kernel, and the codomain is the category of internal commutative monoids in $cc\text{-}SExt_M$ (which is nothing but the category of Schreier split extensions of $M,$ with commutative and cancellative kernel). The main difference, with respect to the construction of the direction functor in \cite{bourn-direction}, is that, for an object $f$ in $cc\text{-}SExt_M$, instead of considering the kernel equivalence relation $\mathcal{E}q(f)$ of $f$ we consider a reflexive and transitive (but not necessarily symmetric) subrelation $\mathcal{R}$ of $\mathcal{E}q(f)$. Still, as it is shown in \cite{AMR}, the functor $d$ retains the good properties which are needed to lift the commutative monoid structure of any object in its codomain to the fibres, equipping their connected components with canonical commutative monoid structures. The commutative monoids obtained through this approach are isomorphic to the second cohomology monoids of the cohomology theory introduced by Patchkoria in \cite{Patchkoria77}.

The present paper continues the work done in \cite{AMR}, extending to the case of monoids the construction of the $1$-dimensional direction functor of \cite{aspherical, Bourn-Rodelo}. The main ingredient here is what we call $\mathcal{R}$-full Schreier internal categories, defined by means of the same subrelation of the kernel pair that is used in the construction of $d.$ In the case of monoids, $\mathcal{R}$-full Schreier internal categories play the role which, in the case of groups, is played by the aspherical abelian groupoids of \cite{aspherical}. Using the equivalence, proved in \cite{P-semi}, between Schreier internal categories and crossed semimodules (that are the monoid analogue of crossed modules of groups), we show that the connected components of the fibres of the new $1$-dimensional direction functor $d_1$ can be described as sets of equivalence classes of suitable crossed extensions of monoids. Moreover, $d_1$ is shown to satisfy the same good properties of the $1$-dimensional direction functor considered in \cite{aspherical, Bourn-Rodelo}, a fact which allows, once again, to lift the commutative monoid structure of any object in the codomain of $d_1$ to the fibres, and to equip the connected components of these fibres with canonical commutative monoid structures. This generalizes to monoids the results of \cite{Holt, Huebschmann} on the characterization of $3$-dimensional cohomology of groups in terms of crossed extensions.

\section{Crossed semimodules and crossed Schreier extensions of monoids}
\label{sec:crossed_semimodules}
Recall that, classically, a \emph{crossed module} \cite{whitehead_1, whitehead_2} is a pair $(B\xlongrightarrow{\delta} C,\alpha)$ where $B$ and $C$ are groups, $\delta$ is a group homomorphism and $\alpha$ is a group action of $C$ on $B$ (viz., a group homomorphism $\alpha\colon C\rightarrow\mathrm{Aut}(B),$ or equivalently a map of sets $\alpha\colon C\times B\dashrightarrow B,$ $(c,b)\mapsto c\ast b,$ with the usual axioms) satisfying the following conditions:
\begin{enumerate}
\item $\delta(c\ast b)=c\cdot\delta(b)\cdot c^ {-1}$ for all $b\in B,$ $c\in C$ \ (\emph{pre-crossed module condition});
\item $\delta(b_1)\ast b_2=b_1\cdot b_2\cdot{b_1}^{-1}$ for all $b_1,b_2\in B$ \ (\emph{Peiffer condition}).
\end{enumerate}
Morphisms of crossed modules are defined accordingly, resulting in an equivalence $\mathbf{Xmod}\cong\mathrm{Gpd}(\mathbf{Gp})$ between the category of crossed modules and the category of (internal) groupoids in the category of groups (\cite{BS}).

An exact sequence
\begin{equation}
\label{eqn:crossed_sequence}
\xymatrix{
A \ar@{>->}[r]^-k &B \ar[r]^-{\delta} &C \ar@{->>}[r]^-{\pi} &G
}
\end{equation}
in $\mathbf{Gp}$ such that $(B\xlongrightarrow{\delta} C,\alpha)$ is a crossed module is called a \emph{crossed extension}.

It is not difficult to prove that, given a crossed extension~\eqref{eqn:crossed_sequence}, the kernel $k$ is a central group  homomorphism (so that the group $A$ is abelian), and that the $C$-action $\alpha$ on $B$ restricts to a group action of $C$ on $A;$ as a consequence, there results a group action of $G$ on $A,$ (well) defined by
\begin{equation}
\label{eqn:induced_action_crossed_sequence}
G\rightarrow \mathrm{Aut}(A), \ \ g=\pi(c)\mapsto \alpha(c).
\end{equation}
When $G$ is a group and $\big(A,\omega\colon G\rightarrow\mathrm{Aut}(A)\big)$ is a $G$-module, a crossed extension~\eqref{eqn:crossed_sequence} is called a crossed extension of $G$ by the $G$-module $(A,\omega)$ if the induced action~\eqref{eqn:induced_action_crossed_sequence} coincides with the given action $\omega.$

Then one can show that the classical Eilenberg-Mac Lane third cohomology group $H^3(G,A,\omega)$ \cite{EML} of the (non necessarily abelian) group $G$ with coefficients in the $G$-module $(A,\omega)$ admits a description in terms of (equivalence classes of) crossed extensions of $G$ by $(A,\omega),$ see \cite{Holt, Huebschmann}.

The definition of a crossed module can be generalised to monoids as follows, as it was first observed in \cite{JS}.
\begin{definition}
\label{def:crossed_semimodule}
A \emph{crossed semimodule} is a pair $(B\xlongrightarrow{\delta} C,\alpha)$ where $B$ and $C$ are monoids, $\delta$ is a monoid homomorphism and $\alpha$ is a monoid action of $C$ on $B$ (viz., a monoid homomorphism $\alpha\colon C\rightarrow\mathrm{End}(B),$ or equivalently a map of sets $\alpha\colon C\times B\dashrightarrow B,$ $(c,b)\mapsto c\ast b,$ with the usual axioms) satisfying the following conditions:
\begin{enumerate}
\item $\delta(c\ast b)+c=c+\delta(b)$ \ for all $b\in B,$ $c\in C;$
\item $\delta(b_1)\ast b_2+b_1=b_1+b_2$ \ for all $b_1,b_2\in B.$
\end{enumerate}
(Here, we denote by $+$ the monoid operation on $B$ and $C,$ with neutral element $0,$ though neither are assumed to be commutative.)
\end{definition}
A morphism of crossed semimodules is then a pair of monoid homomorphisms $\varphi,\psi$ such that the diagrams
\begin{equation}
\label{eqn:mor_crossed_semimod}
\begin{aligned}
\xymatrix{
B \ar[d]_-{\varphi} \ar[r]^-{\delta} &C \ar[d]^-{\psi} \\
{B'} \ar[r]_-{\delta'} &{C',}
}
\
\
\xymatrix{
{C\times B} \ar[d]_-{\psi\times\varphi} \ar@{-->}[r]^-{\alpha} &B \ar[d]^-{\varphi} \\
{C'\times B'} \ar@{-->}[r]_-{\alpha'} &{B'}
}
\end{aligned}
\end{equation}
commute (in the categories $\mathbf{Mon}$ of monoids and $\mathbf{Set}$ of sets, respectively).

We end up with a category $\mathbf{Xsmod}$ of crossed semimodules in $\mathbf{Mon},$ and the equivalence $\mathbf{Xmod}\cong \mathrm{Gpd}(\mathbf{Gp})$ is generalised accordingly:
\begin{theorem}[\cite{P-semi}]
\label{thm:equivalence_crossed_semimodules_schreier_categories}
There is an equivalence $\mathbf{Xsmod}\cong\mathrm{SCat}(\mathbf{Mon})$ between the categories of crossed semimodules and of Schreier internal categories in $\mathbf{Mon},$ i.e. internal categories
\begin{equation}
\label{eqn:schreier_internal_category}
\xymatrix{{\underline{X}: X_1\times_{X_0}X_1} \ar[r]^-{m_X} &{X_1} \ar@<.9ex>[r]^-{d_X} \ar@<-.9ex>[r]_-{c_X} &{X_0} \ar[l]|{e_X} }
\end{equation}
such that every $x\in X_1$ can be written as $x=k_d(a)+e_Xd_X(x)$ for a unique $a\in K[d_X],$ where $k_d\colon K[d_X]\rightarrowtail X_1$ is a kernel of $d_X.$ (In the terminology of \cite{MMS}, Definition 2.6, this property is expressed by saying that $(d_X,e_X)$ is a \emph{Schreier point}.)
\end{theorem}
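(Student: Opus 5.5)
We construct functors in both directions and check that they are mutually quasi-inverse, in the spirit of the Brown--Spencer equivalence $\mathbf{Xmod}\cong\mathrm{Gpd}(\mathbf{Gp})$, but taking care with the lack of inverses.

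\emph{From categories to crossed semimodules.} Given a Schreier internal category $\underline{X}$ as in~\eqref{eqn:schreier_internal_category}, set $B=K[d_X]$, $C=X_0$ and $\delta=c_Xk_d$. The action is defined by conjugation-free means: for $c\in C$ and $b\in B$, the element $e_X(c)+k_d(b)$ lies in $X_1$, so the Schreier property gives a unique $a\in K[d_X]$ with $e_X(c)+k_d(b)=k_d(a)+e_Xd_X(e_X(c)+k_d(b))=k_d(a)+e_X(c)$. We put $c\ast b=a$. Uniqueness yields the monoid-action axioms and that each $c\ast-$ is an endomorphism. Applying $c_X$ to the defining equation gives axiom (1) of Definition~\ref{def:crossed_semimodule}.

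For axiom (2) we use the composition $m_X$. Since $m_X$ is a monoid homomorphism on composable pairs, we apply it to the pairs $(k_d(b_1),e_X\delta(b_1))$ and $(e_X\delta(b_1)\text{-shifted}\ k_d(b_2),k_d(b_1))$ and use the identity laws. More concretely, the standard interchange argument gives that, for $x$ in $K[d_X]$ and $y$ with $d_X(y)=c_X(x)$, composition is $m_X(y,x)=y+x-e_Xc_X(x)$ in the group case. In the monoid case the replacement is
\[
m_X\bigl(k_d(b_2)+e_X\delta(b_1),\,k_d(b_1)\bigr)=k_d(b_2)+k_d(b_1),
\]
obtained by writing the pair as the sum $(k_d(b_2),0)+(e_X\delta(b_1),k_d(b_1))$ in $X_1\times_{X_0}X_1$. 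We then rewrite $k_d(b_2)+e_X\delta(b_1)$ in another way using the action, which produces the Peiffer identity. Functoriality on internal functors is immediate.

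\emph{From crossed semimodules to categories.} Given $(B\xrightarrow{\delta}C,\alpha)$, we take $X_1=B\rtimes_\alpha C$, the semidirect product with operation $(b,c)+(b',c')=(b+c\ast b',c+c')$. We set $d(b,c)=c$, $e(c)=(0,c)$ and $c_X(b,c)=\delta(b)+c$. The map $c_X$ is a homomorphism precisely by axiom (1). Composition is $m((b',c'),(b,c))=(b'+b,c)$ whenever $c'=\delta(b)+c$. We must check that $m$ is a homomorphism, which uses axiom (2), and then associativity and the unit laws, which are direct. The point $(d,e)$ is Schreier because $(b,c)=(b,0)+(0,c)$ uniquely.

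\emph{Equivalence.} Starting from a crossed semimodule, the kernel of $d$ is $B$, and the recovered action and $\delta$ are the original ones. Starting from a Schreier category, the map $B\rtimes X_0\to X_1$, $(b,c)\mapsto k_d(b)+e_X(c)$, is a bijection by the Schreier property and a homomorphism by the definition of $\ast$. It commutes with $d,c,e$, and it commutes with $m$ because an internal category structure is unique when it exists: $m$ is determined by the reflexive graph via the homomorphism argument above. Naturality is routine.

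The main obstacle is the determination of $m$ from the graph without subtraction, which is needed both for the Peiffer condition and for uniqueness of the category structure. We handle it by decomposing every composable pair, using the Schreier property, as a sum of a pair of the form $(x,e_Xd_X(x))$ and a pair of the form $(e_Xc_X(x'),x')$. On such pairs $m$ is forced by the identity laws.
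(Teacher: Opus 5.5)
Your proposal is correct and follows the same construction that the paper records from \cite{P-semi}: the semidirect product $B\rtimes_{\alpha}C$ with $p$, $\lambda$, $s$ and $m(b_1,b_2,c)=(b_2+b_1,c)$ in one direction, and $c_Xk_d$ with the Schreier-point action in the other, where your observation that $m_X$ is forced on a Schreier reflexive graph, via $(y,x)=(k_d(a),0)+(e_Xc_X(x),x)$, correctly handles the comparison of compositions. The only imprecision is in the Peiffer step (your first pair $(k_d(b_1),e_X\delta(b_1))$ is not composable unless $\delta(b_1)=0$, and you cannot move $e_X\delta(b_1)$ to the left of an arbitrary $k_d(b_2)$), which is fixed by applying your identity to $\delta(b_1)\ast b_2$, using $k_d(\delta(b_1)\ast b_2)+e_X\delta(b_1)=e_X\delta(b_1)+k_d(b_2)$, and comparing with the decomposition $(e_X\delta(b_1),k_d(b_1))+(k_d(b_2),0)$ to obtain $\delta(b_1)\ast b_2+b_1=b_1+b_2$.
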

The equivalence, at the level of objects, is obtained as follows:
\begin{itemize}
\item a crossed semimodule $(B\xlongrightarrow{\delta} C,\alpha)$ is mapped to the Schreier (internal) category
\begin{equation*}
\xymatrixcolsep{2.5pc}
\xymatrix{{(B\rtimes_{\alpha} C)\times_C (B\rtimes_{\alpha} C)} \ar[r]^-m &{B\rtimes_{\alpha} C} \ar@<.9ex>[r]^-{p} \ar@<-.9ex>[r]_-{\lambda} &{C,} \ar[l]|-s}
\end{equation*}
where $p(b,c)=c,$ $s(c)=(0,c),$ $\lambda(b,c)=\delta(b)+c$ and $m(b_1,b_2,c)=(b_2+b_1,c);$
\item conversely, a Schreier internal category~\eqref{eqn:schreier_internal_category} is mapped to the crossed semimodule
\begin{equation*}
(K[d_X]\xlongrightarrow{c_Xk_d} X_0,\alpha),
\end{equation*}
where $\alpha\colon X_0\times K[d_X]\dashrightarrow K[d_X],$ $(x,a)\mapsto a'$ such that $e_X(x)+k_d(a)=k_d(a')+e_X(x),$ is the action associated with the Schreier point $(d_X,e_X).$
\end{itemize}

Under the previous equivalence, Schreier internal groupoids in $\mathbf{Mon}$ correspond to the crossed semimodules $(B\xlongrightarrow{\delta} C,\alpha)$ such that $B$ is a group, see \cite{MMS}.

Now, consider a sequence
\begin{equation}
\label{eqn:crossed_sequence_monoids}
\xymatrix{
A \ar@{>->}[r]^-k &B \ar[r]^-{\delta} &C \ar@{->>}[r]^-{\pi} &M
}
\end{equation}
in $\mathbf{Mon}$, where $(B\xlongrightarrow{\delta} C,\alpha)$ is a crossed semimodule, $k$ is a kernel of $\delta$, and $\pi$ is a cokernel of $\delta$. As in the case of groups, it is easy to see that $k$ is central, but it is no longer true that $\alpha$ restricts to an action of $C$ on $A.$ It is observed in \cite{AMMPS} that this property is restored if some stronger assumptions hold for the sequence~\eqref{eqn:crossed_sequence_monoids}.

Indeed, recall that a short exact sequence of monoids and monoid homomorphisms $\xymatrix{K \ar@{>->}[r]^-{k} &X \ar@{->>}[r]^-{f} &M}$ is a \emph{Schreier extension} \cite{Redei} if for every $m\in M$ there exists $u_m\in f^{-1}(m)$ such that any $x\in f^{-1}(m)$ decomposes as $x=k(a)+u_m$ for a unique $a\in K,$ depending on $x$ and on $u_m.$ Since this property does not depend on the choice of the kernel $(K,k),$ we shall often simply say that $f$ is a Schreier extension, if the latter holds. The element $u_m$ is called a \emph{representative} of $m,$ and it is easily checked that in this case another element $v_m=k(a)+u_m$ in the fibre $f^{-1}(m)$ is a representative if and only if $a$ is invertible in $K.$

One can prove that Schreier extensions are pullback stable, meaning that if $\xymatrix{{f\colon X} \ar@{->>}[r] &M}$ is a Schreier extension and
\begin{equation*}
\xymatrix{
P \pullback\ar@{->>}[d]_-{\overline{f}} \ar[r]^-{\overline{g}} &X \ar@{->>}[d]^-f \\
Y \ar[r]_g &M
}
\end{equation*}
is any pullback in $\mathbf{Mon},$ then $\overline{f}$ is also a Schreier extension (see \cite{AMMPS}).

Consider then a (regular epimorphism, monomorphism) factorisation of $\delta$ in the sequence \eqref{eqn:crossed_sequence_monoids}:
\begin{equation}
\label{eqn:factorised_crossed_semimodule}
\begin{aligned}
\xymatrix{
A \ar@{>->}[r]^-k &B \ar@{->>}[rd]_-{\rho} \ar[rr]^-{\delta} & \ &C \ar@{->>}[r]^-{\pi} &{M.} \\
 &\ &N \ar@{>->}[ru]_-{\nu}&\ &\ &\
}
\end{aligned}
\end{equation}
We shall say that:
\begin{itemize}
\item the condition $\mathbf{CS}_r$ holds for the sequence \eqref{eqn:crossed_sequence_monoids}, if in diagram \eqref{eqn:factorised_crossed_semimodule} the monoid $N$ is right cancellative and $\xymatrix{N \ar@{>->}[r]^-{\nu} &C \ar@{->>}[r]^-{\pi} &M}$ is a Schreier extension of monoids;
\item the condition $\mathbf{CS}_l$ holds for the sequence \eqref{eqn:crossed_sequence_monoids}, if in diagram~\eqref{eqn:factorised_crossed_semimodule} the monoid $A$ is right cancellative and $\xymatrix{A \ar@{>->}[r]^-{k} &B \ar@{->>}[r]^-{\rho} &N}$ is a Schreier extension of monoids.
\end{itemize}
Then the following is proven in \cite{AMMPS}:
\begin{itemize}
\item if $\mathbf{CS}_r$ holds, the $C$-action $\alpha$ on $B$ restricts to an action of $C$ on $A;$
\item if $\mathbf{CS}_r$ and $\mathbf{CS}_l$ hold, there results an action of $M$ on $A,$ (well) defined by $M\longrightarrow\mathrm{End}(A),$ $m=\pi(c)\mapsto \alpha(c).$
\end{itemize}

\begin{definition}[\cite{AMMPS}]
\label{def:crossed_Schreier_extension}
A sequence \eqref{eqn:crossed_sequence_monoids} is called a \emph{crossed Schreier extension} when both $\mathbf{CS}_r$ and $\mathbf{CS}_l$ hold.
\end{definition}

Our first goal, in the next section, will be to study the meaning of the conditions $\mathbf{CS}_r$ and $\mathbf{CS}_l$ for the Schreier internal category corresponding (by Theorem~\ref{thm:equivalence_crossed_semimodules_schreier_categories}) to the crossed semimodule $\delta$ in~\eqref{eqn:crossed_sequence_monoids}, but before delving into this discussion, we find it convenient to provide some explicit examples showing how these conditions may or may not hold.
\begin{examples}
\begin{enumerate}
\item Consider the natural order $\mathcal{O}_{\mathbb{N}}=\{(x,y)\in\mathbb{N}\times\mathbb{N}:x\leq y\}$ on the monoid $(\mathbb{N},+,0)$ of natural numbers. Then, if $T_{\mathbb{N}}=\{(x,y,z)\in\mathbb{N}\times\mathbb{N}\times\mathbb{N}:x\leq y\leq z\},$ $p_{1,3}$ denotes the projection $(x,y,z)\mapsto (x,z)$ (corresponding to the transitivity of $\leq$), and $p_1,$ $p_2$ and $\Delta_{\mathbb{N}}$ are the projections $(x,y)\mapsto x$ on the first factor, $(x,y)\mapsto y$ on the second factor, and the diagonal morphism $x\mapsto (x,x)$ respectively, the category
\begin{equation*}
\xymatrix{{T_{\mathbb{N}}} \ar[r]^-{p_{1,3}} &{\mathcal{O}_{\mathbb{N}}} \ar@<.9ex>[r]^-{p_1} \ar@<-.9ex>[r]_-{p_2} &{\mathbb{N}} \ar[l]|{\Delta_{\mathbb{N}}} }
\end{equation*}
is a Schreier category in $\mathbf{Mon}$ (see \cite[Example 3.1.2]{schreier_book}). We have in this case $K[p_1]\cong \mathbb{N},$ and the corresponding crossed semimodule is just the identity morphism on $\mathbb{N},$ so that both $\xymatrix{0 \ar[r] &{\mathbb{N}} \ar@{=}[r] &{\mathbb{N}}}$ and $\xymatrix{{\mathbb{N}} \ar@{=}[r] &{\mathbb{N}} \ar[r] &0}$ are Schreier extensions with cancellative kernels.
\item Now let $\mathcal{O}_{\mathbb{Z}}=\{(x,y)\in\mathbb{Z}\times\mathbb{Z}:x\leq y\}$ denote the natural order on $\mathbb{Z}.$ Then
\begin{equation*}
\xymatrix{{T_{\mathbb{Z}}} \ar[r]^-{p_{1,3}} &{\mathcal{O}_{\mathbb{Z}}} \ar@<.9ex>[r]^-{p_1} \ar@<-.9ex>[r]_-{p_2} &{\mathbb{Z}} \ar[l]|{\Delta_{\mathbb{Z}}} }
\end{equation*}
is again a Schreier category in $\mathbf{Mon}$ (\cite[Example 3.1.3]{schreier_book}), and we have $K[p_1]\cong\mathbb{N}.$ The corresponding crossed semimodule is the inclusion morphism $i\colon\mathbb{N}\rightarrowtail\mathbb{Z},$ and since $coker(i)=0,$ the sequence $\xymatrix{ {\mathbb{N}} \ar@{>->}[r]^-i &{\mathbb{Z}} \ar[r] &0}$ is in this case not even exact. Notice that, nonetheless, there is a trivial action of $M=0$ on $A=0=ker(i),$ so that the conditions $\mathbf{CS}_r,$ $\mathbf{CS}_l$ are certainly not necessary for a sequence~\eqref{eqn:crossed_sequence_monoids} to yield an induced action of $M$ on $A.$
\item If $L,$ $M$ are monoids and $L$ is commutative, any central monoid homomorphism $f\colon L\rightarrow M$ is a crossed semimodule with respect to the trivial action $m\ast l=l$ for any $m\in M$ and $l\in L$ (cf.~\cite[Example 3.3.3]{schreier_book}). Thus, in particular, any morphism in the category $\mathbf{CMon}$ of commutative monoids is trivially a crossed semimodule. The resulting sequence
\begin{equation*}
\xymatrix{
{ker(f)} \ar@{>->}[r]^-k &L \ar@{->>}[rd]_-{\rho} \ar[rr]^-f &\ &M \ar@{->>}[r]^-r &{coker(f)} \\
&\ &N \ar@{>->}[ru]_-{\nu} &\ &\
}
\end{equation*}
may or may not satisfy the conditions $\mathbf{CS}_r,$ $\mathbf{CS}_l,$ even if $f$ admits a (normal epimorphism, normal monomorphism) factorisation (i.e., even if $(k,\rho)$ and $(\nu,r)$ are indeed short exact sequences in $\mathbf{Mon}$).

For example, if $(M_2=\{0,1\},\cdot,1)$ is the commutative monoid with neutral element $1$ and $0\cdot 0=0,$ the monoid homomorphism $f\colon(\mathbb{N},\cdot,1)\rightarrow(M_2,\cdot,1)$ given by $f(n)=1$ if $n\neq0$ and $f(0)=0$ is a cokernel of its kernel $i\colon(\mathbb{N}\setminus\{0\},\cdot,1)\rightarrowtail(\mathbb{N},\cdot,1),$ $i(n)=n,$ but the sequence
\begin{equation*}
\xymatrix{{\mathbb{N}\setminus\{0\}}  \ar@{>->}[r]^-i &{\mathbb{N}} \ar@{->>}[r]^-f &{M_2}}
\end{equation*}
is not a Schreier one, so that $\mathbf{CS}_l$ fails (see \cite[Example 2.10.6]{AMR}), whereas for the unique monoid homomorphism $f\colon(\mathbb{N},+,0)\rightarrow C_m(t)$ defined by $f(1)=t$ (where $C_m(t)$ is the multiplicative cyclic group of order $m>1,$ with generator $t$), both $\mathbf{CS}_r$ and $\mathbf{CS}_l$ hold (see \cite[Example 4.2]{P-II}).
\end{enumerate}
\end{examples}
\section{$\mathcal{R}$-full Schreier internal categories in $cc\text{-}SExt_M$}
Using the notation introduced in \cite[Definition 3.2]{AMR}, by a \emph{$cc$-Schreier extension} we shall mean a Schreier extension of monoids
\begin{equation}
\label{eqn:schreier_extension}
\xymatrix{{\mathbb{E}:K} \ar@{>->}[r]^-{k} &X \ar@{->>}[r]^-{f} &M}
\end{equation}
whose kernel $K$ is commutative and cancellative. The category of $cc$-Schreier extensions with fixed codomain $M$ shall be denoted by $cc\text{-}SExt_M,$ the morphisms $\mathbb{E}\rightarrow\mathbb{E}'$ being the monoid homomorphisms $h$ making the diagram
\begin{equation*}
\xymatrix{
X \ar@{->>}[rd]_-f \ar[rr]^-h &\ &{X'} \ar@{->>}[ld]^-{f'} \\
&M
}
\end{equation*}
commute, and preserving the representatives in the sense that if $u_m$ is any representative in $\mathbb{E},$ then $h(u_m)$ is a representative in $\mathbb{E}'.$ It is not difficult to prove that the latter holds as soon as \emph{some} representative of $m$ in $\mathbb{E}$ is mapped by $h$ to a representative in $\mathbb{E}',$ see \cite[Proposition 2.8]{AMR}.

Recall also from \cite[Section 5]{AMR}, that to any $cc$-Schreier extension~\eqref{eqn:schreier_extension} one associates an internal reflexive and transitive relation
\begin{equation*}
\xymatrix{ {\rel{E}:R_{\mathbb{E}}}\ar@<.9ex>[r]^-{r_1} \ar@<-.9ex>[r]_-{r_2} &X \ar[l]|-{s_0}}
\end{equation*}
on $X,$ defined by $(x,y)\in R_{\mathbb{E}}$ if and only if $y=k(a)+x$ for some $a\in K$ (the element $a$ is then necessarily unique, by the cancellativity of $K$), with projections $r_1(x,y)=x,$ $r_2(x,y)=y$ and reflexivity given by the diagonal $s_0(x)=(x,x).$

Observe that there is a trivial inclusion $j\colon R_{\mathbb{E}}\rightarrowtail \mathrm{Eq}(f)=\{(x,y)\in X\times X:f(x)=f(y)\}$ by which $\rel{E}$ is a subobject of the kernel pair relation $\mathcal{E}q(f)$ of $f.$ Then one can prove that $\mathcal{E}q(f)$ is the congruence generated by $\rel{E},$ meaning that $f$ is a coequaliser of $r_1$ and $r_2$ (see \cite[Proposition 5.1]{AMR}), and that $\rel{E}$ is symmetric if and only if $K$ is a group (see \cite[Corollary 5.3]{AMR}).

Finally, the cancellativity of $K$ guarantees that the sequence
\begin{equation*}
\xymatrix{{K} \ar@{>->}[r]^-{\la0,k\ra} &{R_{\mathbb{E}}} \ar@{->>}[r]^-{r_1} &X}
\end{equation*}
is also a Schreier extension of monoids (see \cite[Proposition 5.5]{AMR}).

Now, consider a Schreier internal category
\begin{equation}
\label{eqn:schreier_category_SExt_M_with_Es}
\xymatrix{
{{{\mathbb{E}}_1}\times_{{\mathbb{E}}_0}{{\mathbb{E}}_1}} \ar[r]^-{m_X} &{{\mathbb{E}}_1} \ar@<.9ex>[r]^-{d_X} \ar@<-.9ex>[r]_-{c_X} &{{\mathbb{E}}_0} \ar[l]|-{e_X}
}
\end{equation}
in $cc\text{-}SExt_M,$ where $\mathbb{E}_0$ and $\mathbb{E}_1$ are the $cc$-Schreier extensions
\begin{equation*}
\begin{aligned}
{\xymatrix{{\mathbb{E}_0:K_0} \ar@{>->}[r]^-{k_0} &{X_0} \ar@{->>}[r]^-{f_0} &{M,}}}
\
\
{\xymatrix{{\mathbb{E}_1:K_1} \ar@{>->}[r]^-{k_1} &{X_1} \ar@{->>}[r]^-{f_1} &{M.}}}
\end{aligned}
\end{equation*}
We have therefore a commutative diagram of monoid homomorphisms
\begin{equation}
\begin{aligned}
\label{eqn:schreier_category_SExt_M}
\xymatrix{
{X_1} \ar@{->>}[rd]_-{f_1} \ar@<.9ex>[rr]^-{d_X} \ar@<-.9ex>[rr]_-{c_X} &\ &{X_0} \ar@{->>}[ld]^-{f_0} \ar[ll]|-{e_X} \\
&M
}
\end{aligned}
\end{equation}
such that $\xymatrix{{\underline{X}: X_1\times_{X_0}X_1} \ar[r]^-{m_X} &{X_1} \ar@<.9ex>[r]^-{d_X} \ar@<-.9ex>[r]_-{c_X} &{X_0} \ar[l]|-{e_X}}$ is a Schreier internal category in $\mathbf{Mon},$ and $d_X,$ $c_X$ and $e_X$ are morphisms of Schreier extensions.

The commutativity of~\eqref{eqn:schreier_category_SExt_M} means that we have equalities $f_1e_X=f_0$ and $f_0d_X=f_1=f_0c_X,$ which imply that there is an induced monoid homomorphism $\la d_X,c_X\ra$ from $X_1$ into the kernel pair of $f_0$:
\begin{equation*}
\xymatrix{
{X_1} \ar@/^1pc/[rrd]^-{c_X} \ar@/_1pc/[rdd]_-{d_X} \ar@{.>}[rd]|{\la d_X,c_X\ra} &\ &\ \\
&{\mathrm{Eq}(f_0)} \pullback\ar[d] \ar[r]  &{X_0} \ar@{->>}[d]^-{f_0} \\
&{X_0} \ar@{->>}[r]_-{f_0} &{M.}
}
\end{equation*}
\begin{proposition}
\label{prop:d_X,c_X-inside_R}
Denote by $k_d\colon K[d_X]\rightarrowtail X_1$ a kernel of $d_X,$ so that the crossed semimodule associated with $\underline{X}$ (as in Theorem~\ref{thm:equivalence_crossed_semimodules_schreier_categories}) is $\delta=c_Xk_d\colon K[d_X]\rightarrow X_0.$ Then:
\begin{enumerate}
\item the morphism $\la d_X,c_X\ra$ factorises through $j_0\colon R_{\mathbb{E}_0}\rightarrowtail \mathrm{Eq}(f_0);$
\item if $\la d_X,c_X\ra\colon X_1\rightarrow R_{\mathbb{E}_0}$ is epimorphic, $f_0$ is a cokernel of $\delta.$
\end{enumerate}
\end{proposition}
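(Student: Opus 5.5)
For part (1), we need to show that for every $x\in X_1$ there is $a\in K_0$ with $c_X(x)=k_0(a)+d_X(x)$. We use the Schreier property of $\mathbb{E}_1$ together with the fact that $c_X$ and $d_X$ are morphisms of Schreier extensions. Let $m=f_1(x)$ and choose a representative $u_m$ in $\mathbb{E}_1$, so that $x=k_1(b)+u_m$ for a unique $b\in K_1$. Since $d_X$ and $c_X$ preserve representatives, $d_X(u_m)$ and $c_X(u_m)$ are both representatives of $m$ in $\mathbb{E}_0$. Two representatives of the same fibre differ by an invertible element of the kernel, so $c_X(u_m)=k_0(g)+d_X(u_m)$ with $g$ invertible in $K_0$. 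Both $d_X$ and $c_X$ restrict to homomorphisms $K_1\to K_0$ (they commute with the projections onto $M$), say $d_X k_1=k_0\bar d$ and $c_Xk_1=k_0\bar c$. We then compute
\[
c_X(x)=k_0(\bar c(b))+k_0(g)+d_X(u_m).
\]
On the other hand, $d_X(x)=k_0(\bar d(b))+d_X(u_m)$. We would like to write $c_X(x)=k_0(a)+d_X(x)$ with $a=\bar c(b)+g-\bar d(b)$, but $\bar d(b)$ need not be invertible. This is the main obstacle.

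To get around it, we use the commutativity of $K_0$ and the internal-category structure. First, split $x$ via the Schreier point $(d_X,e_X)$ as $x=k_d(y)+e_Xd_X(x)$ with $y\in K[d_X]$. Then $c_X(x)=c_Xk_d(y)+d_X(x)$, so it suffices to show that $c_Xk_d(y)\in k_0(K_0)$. Since $d_Xk_d(y)=0$, we have $f_1k_d(y)=f_0d_Xk_d(y)=0$, hence $f_0c_Xk_d(y)=f_1k_d(y)=0$, and therefore $c_Xk_d(y)$ lies in the kernel of $f_0$, i.e. in $k_0(K_0)$. So $a$ with $k_0(a)=c_Xk_d(y)$ works, and this gives the factorisation through $j_0$ directly, avoiding the earlier computation. (Indeed, the morphism-of-extensions hypothesis is not even needed here; the Schreier point decomposition suffices.) The factorisation $X_1\to R_{\mathbb{E}_0}$ is a monoid homomorphism because $j_0$ is a monomorphism.

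For part (2), we show that $f_0$ is a cokernel of $\delta=c_Xk_d$. First, $f_0\delta=f_1k_d=f_0d_Xk_d=0$. Now let $g\colon X_0\to Z$ satisfy $g\delta=0$. By \cite[Proposition 5.1]{AMR}, $f_0$ is a coequaliser of $r_1,r_2$, so it suffices to show that $gr_1=gr_2$. Since $\la d_X,c_X\ra$ is epimorphic, it is enough to show that $gd_X=gc_X$. Using the decomposition $x=k_d(y)+e_Xd_X(x)$, we get
\[
gc_X(x)=g\delta(y)+gd_X(x)=gd_X(x).
\]
Hence $g$ factors uniquely through $f_0$, uniqueness coming from $f_0$ being a (regular) epimorphism. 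The only care needed is in the step from $gd_X=gc_X$ to $gr_1=gr_2$: it uses $r_i\la d_X,c_X\ra$ being $d_X$, respectively $c_X$, together with the epimorphism cancellation.
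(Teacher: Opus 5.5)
Your proof is correct and follows the paper's route. Part (1) is the paper's argument, since the decomposition $x=k_d(y)+e_Xd_X(x)$ is exactly what the isomorphism $X_1\cong K[d_X]\rtimes_{\alpha}X_0$ with $\lambda(a,x)=c_Xk_d(a)+x$ encodes. Part (2) is an elementwise unpacking of the paper's pasting of three pushout squares: $d_X$ is a cokernel of $k_d$, which your direct check that $g\delta=0$ implies $gc_X=gd_X$ replaces; $\langle d_X,c_X\rangle$ is epimorphic; and $f_0$ is a coequaliser of $r_1,r_2$. The opening attempt via representatives is unnecessary and can be deleted.
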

\begin{proof}
By Theorem~\ref{thm:equivalence_crossed_semimodules_schreier_categories}, there is an isomorphism $\xi$ making the diagram
\begin{equation*}
\xymatrixrowsep{1pc}
\xymatrix{
{X_1} \ar[dd]_-{\xi}^-{\wr} \ar@<.9ex>[r]^-{d_X} \ar@<-.9ex>[r]_-{c_X} &{X_0} \ar@{=}[dd] \ar@{->>}[rd]^-{f_0} \ar[l]|-{e_X} \\
&\ &M \\
{K[d_X]\rtimes_{\alpha}X_0} \ar@<.9ex>[r]^-{p} \ar@<-.9ex>[r]_-{\lambda} &{X_0} \ar[l]|-{s} \ar@{->>}[ru]_-{f_0}
}
\end{equation*}
commute in $\mathbf{Mon},$ where $\alpha$ is the action associated with the Schreier point $(d_X,e_X).$ Thus  $\la d_X,c_X\ra=\la p,\lambda\ra\xi.$ Since we have $\lambda(a,x)=c_Xk_d(a)+x$ and $f_0c_X\big(k_d(a)\big)$ $=$ $f_0d_X\big(k_d(a)\big)$ $=$ $0,$ it follows that indeed $\la p,\lambda \ra(a,x)=\big(x,c_Xk_d(a)+x\big)\in R_{\mathbb{E}_0},$ proving $(1).$

To prove $(2),$ observe first that since by assumption $(d_X,e_X)$ is a Schreier point, $d_X$ is a cokernel of its kernel $k_d$ (cf.~\cite[Section 4]{P-II} and \cite[Proposition 2.6]{AMR}). Then in the commutative diagram
\begin{equation*}
\xymatrixcolsep{3pc}
\xymatrix{
{K[d_X]} \pullback\ar[d] \ar@{>->}[r]^-{k_d} \ar@{}[rd]|{(i)} &{X_1} \ar@/^2pc/[rr]^-{c_X} \ar[r]^-{\la d_X,c_X\ra} \ar[d]_-{d_X} \ar@{}[rd]|{(ii)} &{R_{\mathbb{E}_0}} \ar[d]_-{r_1} \ar[r]^-{r_2} \ar@{}[rd]|{(iii)} &{X_0} \ar@{->>}[d]^-{f_0} \\
0 \ar[r] &{X_0} \pushout \ar@{=}[r] &{X_0} \pushout \ar@{->>}[r]_-{f_0} &{M} \pushout
}
\end{equation*}
the square $(i)$ is both a pullback and a pushout, $(ii)$ is a pushout because $\la d_X,c_X\ra$ is epimorphic, and $(iii)$ is a pushout because $f_0$ is a quotient of $\mathcal{R}_{\mathbb{E}_0}.$ Thus the combined square $(i)+(ii)+(iii)$ is a pushout, and we are done.
\end{proof}
\begin{remark}
\label{rmk:h}
By point $(1)$ and the cancellativity of $K_0,$ for every $x\in X_1$ we have $c_X(x)=k_0(a)+d_X(x)$ for a unique $a\in K_0;$ this defines a map of sets $h\colon X_1\dashrightarrow K_0,$ $h(x)=a.$ Then, since $K_0$ is commutative and $k_0,$ $d_X$ and $c_X$ are monoid homomorphisms, the composite map $hk_1$ is actually a monoid homomorphism, which we shall likewise denote by $h\colon K_1\rightarrow K_0.$
\end{remark}
Observe that, since $f_0\delta=f_0c_Xk_d=f_0d_Xk_d=0,$ by the universal property of the kernel $k_0$ of $f_0$ the crossed semimodule $\delta=c_Xk_d$ factorises as $\delta=k_0g$
\begin{equation*}
\xymatrix{
{K[d_X]} \ar[rr]^-{\delta} \ar@{.>}[rd]_-g &\ &{X_0} \ar@{->>}[r]^-{f_0} &{M} \\
&{K_0} \ar@{>->}[ru]_-{k_0} &\ &\
}
\end{equation*}
for a unique monoid homomorphism $g\colon K[d_X]\rightarrow K_0,$ which is simply the restriction of the monoid homomorphism $h\colon K_1\rightarrow K_0$ of Remark \ref{rmk:h} to the subobject $K[d_X]$ of $K_1.$

We prove the following:
\begin{proposition}
\label{prop:g_is_Schreier}
If $\la d_X,c_X\ra\colon X_1\rightarrow R_{\mathbb{E}_0}$ is a Schreier extension of monoids, so is $g.$
\end{proposition}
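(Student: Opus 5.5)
The plan is to get $g$ as a pullback of $\la d_X,c_X\ra$ along a suitable monoid homomorphism into $R_{\mathbb{E}_0}$. Then the pullback stability of Schreier extensions recalled from \cite{AMMPS} gives the result directly. The natural candidate for the arrow along which to pull back is
\[
\la 0,k_0\ra\colon K_0\rightarrow R_{\mathbb{E}_0},\qquad b\mapsto \big(0,k_0(b)\big).
\]
This is well defined, because $k_0(b)=k_0(b)+0$ shows that $\big(0,k_0(b)\big)\in R_{\mathbb{E}_0}$. It is clearly a monoid homomorphism.

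\emph{Commutativity of the square.} For $x\in K[d_X]$, that is, $d_X(k_d(x))=0$, we have
\[
\la d_X,c_X\ra k_d(x)=\big(0,\delta(x)\big)=\big(0,k_0g(x)\big)=\la 0,k_0\ra g(x).
\]
So the square with top edge $k_d\colon K[d_X]\rightarrow X_1$, right edge $\la d_X,c_X\ra$, left edge $g$ and bottom edge $\la 0,k_0\ra$ commutes.

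\emph{Pullback property.} It suffices to check this on elements, since pullbacks in $\mathbf{Mon}$ are computed as in $\mathbf{Set}$. Take a pair $(x,b)\in X_1\times K_0$ with $\big(d_X(x),c_X(x)\big)=\big(0,k_0(b)\big)$. The first component gives $d_X(x)=0$, so $x=k_d(a)$ for a unique $a\in K[d_X]$. The second component gives $k_0g(a)=\delta(a)=c_X(x)=k_0(b)$. Since $k_0$ is injective, $g(a)=b$. Hence the comparison map from $K[d_X]$ to the set-theoretic pullback, $a\mapsto (k_d(a),g(a))$, is surjective. It is injective because $k_d$ is. So the square is a pullback.

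\emph{Conclusion.} Since $\la d_X,c_X\ra$ is a Schreier extension by hypothesis, pullback stability yields that $g$ is a Schreier extension. This includes surjectivity of $g$ and the existence of representatives with the unique decomposition property relative to the kernel of $g$.

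I do not expect a genuine obstacle here. The only points needing care are that $\la 0,k_0\ra$ really lands in $R_{\mathbb{E}_0}$, and that the identification of the pullback uses injectivity of $k_0$ and the fact that $k_d$ is a kernel of $d_X$.
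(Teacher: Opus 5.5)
Your proposal is correct and follows the paper's proof: the paper also shows that the square with $k_d$, $g$, $\langle d_X,c_X\rangle$ and $\langle 0,k_0\rangle\colon K_0\to R_{\mathbb{E}_0}$ is a pullback, and then applies pullback stability of Schreier extensions. The only difference is that you check the pullback property elementwise. The paper instead deduces it from the rows $K[d_X]\to X_1\to X_0$ and $K_0\to R_{\mathbb{E}_0}\to X_0$ being short exact with the identity on $X_0$.
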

\begin{proof}
Immediate by the pullback stability of Schreier extensions, because in the commutative diagram
\begin{equation*}
\xymatrix{
{K[d_X]} \ar@{>>}[d]_-g \ar@{>->}[r]^-{k_d} &{X_1} \ar@{>>}[d]_-{\la d_X,c_X\ra} \ar@{->>}[r]^-{d_X} &{X_0} \ar@{=}[d] \\
{K_0} \ar@{>->}[r]_-{\la0,k_0\ra} &{R_{\mathbb{E}_0}} \ar@{->>}[r]_{r_1} &{X_0,}
}
\end{equation*}
whose rows are short exact sequences in $\mathbf{Mon},$ the left-hand square is a pullback.
\end{proof}
Accordingly, if $\la d_X,c_X\ra\colon X_1\rightarrow R_{\mathbb{E}_0}$ is a Schreier extension, in the diagram
\begin{equation}
\begin{aligned}
\label{eqn:factorisation_crossed_semimodule}
\xymatrix{
{A} \ar@{>->}[r]^-{k} &{K[d_X]} \ar[rr]^-{\delta} \ar@{->>}[rd]_-g &\ &{X_0} \ar@{->>}[r]^-{f_0} &{M} \\
&\ &{K_0} \ar@{>->}[ru]_-{k_0} &\ &\
}
\end{aligned}
\end{equation}
whose upper row is the sequence associated with the crossed semimodule $\delta,$ we have that $\delta=k_0g$ is the (regular epimorphism, monomorphism) factorisation of $\delta,$ and the kernel $(A,k)$ of $\delta$ is also a kernel of $g.$

Moreover, the monoids $K_0$ and $K_1$ are commutative and cancellative by assumption, so that $K[d_X]$ is also commutative and cancellative (it is a submonoid of $K_1,$ by the equality $f_0d_X=f_1$). This in turn implies that $A,$ which is a submonoid of $K[d_X],$ is commutative and cancellative.

Thus, we have proved that, starting with a Schreier internal category~\eqref{eqn:schreier_category_SExt_M_with_Es} in $cc\text{-}SExt_M$ such that the induced morphism $\la d_X,c_X\ra\colon X_1\rightarrow R_{\mathbb{E}_0}$ is a Schreier extension of monoids, the corresponding sequence~\eqref{eqn:factorisation_crossed_semimodule} meets the conditions $\mathbf{CS}_r,$ $\mathbf{CS}_l$ of the previous section, i.e. it is a crossed Schreier extension in the sense of Definition \ref{def:crossed_Schreier_extension}.

Conversely, suppose that we have a crossed semimodule $(B\xlongrightarrow{\delta}C,\alpha),$ with $B$ commutative, such that the associated sequence
\begin{equation}
\label{eqn:crossed_sequence_2}
\begin{aligned}
\xymatrix{
A \ar@{>->}[r]^-k &B \ar@{->>}[rd]_-{\rho} \ar[rr]^-{\delta} & \ &C \ar@{->>}[r]^-{\pi} &{M} \\
 &\ &N \ar@{>->}[ru]_-{\nu}&\ &\ &\
}
\end{aligned}
\end{equation}
is a crossed Schreier extension.

It is straightforward that the commutativity of $B$ implies the commutativity of $A$ and $N,$ and moreover:
\begin{lemma}
\label{lemma:B_cancellative}
Under the above assumptions on~\eqref{eqn:crossed_sequence_2}:
\begin{enumerate}
\item the monoid $B$ is cancellative;
\item the composite action $\alpha\nu\colon N\rightarrow\mathrm{End}(B)$ is the trivial action (i.e., $\alpha\nu(n)=1_B$ for all $n\in N$).
\end{enumerate}
\end{lemma}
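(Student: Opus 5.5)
The plan is to prove (1) by pushing a cancellation problem in $B$ down to $N$ along $\rho$ and then lifting it back to $A$ via the Schreier decomposition of $\xymatrix{A \ar@{>->}[r]^-{k} &B \ar@{->>}[r]^-{\rho} &N}$. Then (2) follows from (1) and the Peiffer condition. Throughout, I use that $B$, and hence $A$ and $N$, are commutative. So right cancellativity of $A$ and $N$, granted by $\mathbf{CS}_l$ and $\mathbf{CS}_r$, is full cancellativity. It is also enough to cancel on one side in $B$.

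For (1), suppose $b_1+b=b_2+b$ in $B$.
\begin{enumerate}
\item Applying $\rho$ gives $\rho(b_1)+\rho(b)=\rho(b_2)+\rho(b)$ in $N$. Cancelling in $N$ yields $\rho(b_1)=\rho(b_2)=:n$.
\item Put $m=\rho(b)$ and choose representatives $u_n$, $u_m$, $u_{n+m}$ for the Schreier extension $\rho$. Write $b_i=k(a_i)+u_n$ for $i=1,2$, and $b=k(a)+u_m$.
\item The element $u_n+u_m$ lies in the fibre $\rho^{-1}(n+m)$, so $u_n+u_m=k(c)+u_{n+m}$ for some $c\in A$.
\item Using the commutativity of $B$ to regroup, we get
\[ b_i+b=k(a_i+a+c)+u_{n+m} \qquad (i=1,2). \]
\item Uniqueness of the decomposition with respect to the representative $u_{n+m}$ gives $a_1+a+c=a_2+a+c$ in $A$.
\item Cancellativity of $A$ then gives $a_1=a_2$, hence $b_1=b_2$.
\end{enumerate}
The only delicate point is the regrouping in step 4, where commutativity of $B$ is genuinely needed to move $k(a)$ past $u_n$. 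That is why the hypothesis that $B$ is commutative is imposed.

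For (2), let $n\in N$. Since $\rho$ is a regular epimorphism, hence surjective, write $n=\rho(b')$. Then $\alpha\nu(n)=\alpha(\nu\rho(b'))=\alpha(\delta(b'))$. For any $b\in B$, the Peiffer condition of Definition~\ref{def:crossed_semimodule} gives
\[ \delta(b')\ast b+b'=b'+b=b+b', \]
the second equality by commutativity of $B$. Cancelling $b'$ by (1) gives $\delta(b')\ast b=b$, so $\alpha\nu(n)=1_B$.

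I do not expect a serious obstacle. The main care is in part (1): the representative $u_{n+m}$ must be chosen after the fibres are fixed, and the element $c$ must be shared by both decompositions so that it cancels in $A$.
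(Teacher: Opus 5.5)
Your proof is correct and follows essentially the same route as the paper. In both, you push the equation down to $N$, cancel there to get a common representative, reduce to cancellation in $A$ via uniqueness of the Schreier decomposition, and then derive (2) from the Peiffer condition together with (1).

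One correction to your commentary: commutativity of $B$ is not genuinely needed in step 4. The paper decomposes the whole element $u_n+b$ as $k(a'')+v$ with $v$ a representative, so that $b_i+b=k(a_i+a'')+v$, and right cancellativity of $A$ then finishes the argument without ever moving $k(a)$ past $u_n$.
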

\begin{proof}
\begin{enumerate}
\item We have a Schreier extension $\xymatrix{A \ar@{>->}[r]^-k &B \ar@{->>}[r]^-{\rho} &N}$ where both $A$ and $N$ are cancellative. Let $b_1,b_2,b_3\in B$ be such that $b_2+b_1=b_3+b_1.$ Then $\rho(b_2)+\rho(b_1)=\rho(b_3)+\rho(b_1),$ and since $N$ is cancellative it follows that we can write $b_2=k(a_2)+u$ and $b_3=k(a_3)+u,$ for the same representative $u\in B$ of $\rho(b_2)=\rho(b_3)$ and two unique elements $a_2,a_3\in A.$

Observe now that since $A$ is cancellative, for every $b\in B$ and $a,a'\in A$ one has $k(a)+b=k(a')+b$ if and only if $a=a',$ simply by writing $b=k(a'')+v$ for some representative $v$ of $\rho(b)$ and by cancelling $a''.$

Then the equality $b_2+b_1=b_3+b_1,$ which translates into $k(a_2)+(u+b_1)=k(a_3)+(u+b_1),$ entails that $a_2=a_3,$ so that $b_2=b_3.$
\item Write $c\ast b$ for the action $\alpha(c)(b).$ Then, since $\rho$ is surjective, for every $n\in N$ we have $\nu(n)\ast b=\nu\rho(b')\ast b=\delta(b')\ast b$ for some $b'\in B.$

Recall now that, by Definition~\ref{def:crossed_semimodule}$(2),$ there is an equality $\delta(b_1)\ast b_2+b_1=b_1+b_2$ for every $b_1,b_2\in B,$ which means (since $B$ is commutative and cancellative) that $\delta(b_1)\ast b_2=b_2$ for any $b_1,b_2\in B.$

We conclude that indeed $\nu(n)\ast b=b$ for all $n\in N$ and $b\in B.$
\end{enumerate}
\end{proof}
The Schreier internal category corresponding to $\delta$ by Theorem~\ref{thm:equivalence_crossed_semimodules_schreier_categories} is the category $\xymatrix{{B\rtimes_{\alpha} C} \ar@<.9ex>[r]^-{p} \ar@<-.9ex>[r]_-{\lambda} &{C} \ar[l]|-s}$ in $\mathbf{Mon},$ with $p(b,c)=c,$ $s(c)=(0,c)$ and $\lambda(b,c)=\delta(b)+c.$ Then $\pi p=\pi\lambda$ (because $\pi\delta=0$), and we obtain a commutative diagram
\begin{equation}
\label{eqn:associated_category}
\begin{aligned}
\xymatrix{
{B\rtimes_{\alpha}C} \ar@{->>}[rd]_-{\pi_1} \ar@<.9ex>[rr]^-{p} \ar@<-.9ex>[rr]_-{\lambda} &\ &{C} \ar@{->>}[ld]^-{\pi} \ar[ll]|-{s} \\
&{M}
}
\end{aligned}
\end{equation}
where $\pi_1=\pi p=\pi\lambda.$

Observe that, by Lemma~\ref{lemma:B_cancellative}$(2),$ the semidirect product $B\rtimes_{\alpha\nu}N$ is just the direct product $B\times N,$ so that a kernel of $\pi_1$ is given by $1_B\times\nu\colon B\times N\rightarrowtail B\rtimes_{\alpha}C.$
\begin{proposition}
\label{prop:semidirect_is_Schreier}
Under the above assumptions on \eqref{eqn:crossed_sequence_2}:
\begin{enumerate}
\item the sequence
\begin{equation*}
\xymatrix{{{\mathbb{E}}_1:B\times N} \ar@{>->}[r]^-{1_B\times\nu} &{ B\rtimes_{\alpha}C} \ar@{->>}[r]^-{\pi_1} &M}
\end{equation*}
and the diagram~\eqref{eqn:associated_category} belong to the category $cc\text{-}SExt_M;$
\item the induced morphism $\la p,\lambda\ra\colon B\rtimes_{\alpha}C\rightarrow R_{{\mathbb{E}}_0},$ where $\xymatrixcolsep{1.5pc}\xymatrix{{{\mathbb{E}}_0:N} \ar@{>->}[r]^-{\nu} &{C} \ar@{->>}[r]^-{\pi} &{M,}}$ is a Schreier extension.
\end{enumerate}
\end{proposition}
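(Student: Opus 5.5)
For part (1), the plan is to verify the defining conditions of $cc\text{-}SExt_M$ directly on semidirect-product elements, using the operation $(b,c)+(b',c')=(b+c\ast b',\,c+c')$.

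First, $\mathbb{E}_0$ is a $cc$-Schreier extension. By $\mathbf{CS}_r$, $N$ is right cancellative and $\nu,\pi$ is a Schreier extension. $N$ is commutative because $B$ is, so it is cancellative.

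For $\mathbb{E}_1$:
\begin{itemize}
\item Its kernel $B\times N$ is commutative, and it is cancellative by Lemma~\ref{lemma:B_cancellative}(1) together with the cancellativity of $N$.
\item $\pi_1=\pi p$ is surjective, since $p$ is split by $s$.
\item For $m\in M$, fix a representative $u_m\in C$ of $\mathbb{E}_0$ and propose $(0,u_m)$ as a representative in $\mathbb{E}_1$. If $\pi(c)=m$, then $c=\nu(n)+u_m$ for a unique $n$, and
\[(b,\nu(n))+(0,u_m)=(b+\nu(n)\ast 0,\ \nu(n)+u_m)=(b,c).\]
Comparing components gives uniqueness of $(b,n)$, using that $\nu$ is injective.
\end{itemize}
The diagram~\eqref{eqn:associated_category} commutes by construction. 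To see that $p,\lambda,s$ preserve representatives, it suffices by \cite[Proposition 2.8]{AMR} to check one representative per $m$:
\[p(0,u_m)=u_m,\qquad \lambda(0,u_m)=\delta(0)+u_m=u_m,\qquad s(u_m)=(0,u_m).\]

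For part (2), note that $\langle p,\lambda\rangle(b,c)=(c,\nu\rho(b)+c)$, which lies in $R_{\mathbb{E}_0}$.
\begin{itemize}
\item \emph{Surjectivity.} A general element of $R_{\mathbb{E}_0}$ is $(x,\nu(n)+x)$ with $n\in N$ unique, since $N$ is cancellative. As $\rho$ is surjective, choose $b$ with $\rho(b)=n$; then $(b,x)$ maps to it.
\item \emph{Kernel.} The kernel of $\langle p,\lambda\rangle$ consists of the pairs $(b,0)$ with $\delta(b)=0$, so it is $k(A)\times 0$.
\item \emph{Representatives.} For $(x,\nu(n)+x)$, choose a representative $v_n\in B$ of $n$ for the Schreier extension $k,\rho$ given by $\mathbf{CS}_l$, and propose $(v_n,x)$.
\item \emph{Decomposition.} If $\rho(b)=n$, write $b=k(a)+v_n$ with $a$ unique. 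Then
\[(k(a),0)+(v_n,x)=(k(a)+0\ast v_n,\ x)=(b,x),\]
and uniqueness of $a$ comes from the Schreier property of $\rho$.
\end{itemize}

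The step I expect to be the real obstacle is showing that every element $(b,x)$ of the fibre over $(x,\nu(n)+x)$ really has $\rho(b)=n$. The fibre condition only gives $\nu(\rho(b))+x=\nu(n)+x$ in $C$, and $C$ need not be cancellative. To overcome this:
\begin{itemize}
\item Decompose $x=\nu(n_0)+u$ with $u$ a representative for $\pi$.
\item Since $N$ is commutative, this becomes $\nu(\rho(b)+n_0)+u=\nu(n+n_0)+u$.
\item Uniqueness in the Schreier extension $\mathbb{E}_0$ gives $\rho(b)+n_0=n+n_0$.
\item Cancellativity of $N$ then yields $\rho(b)=n$.
\end{itemize}
With this in hand, the fibres of $\langle p,\lambda\rangle$ are exactly the sets described above, so $\langle p,\lambda\rangle$ is a Schreier extension.
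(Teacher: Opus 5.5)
Your proof is correct and follows the same route as the paper. In (1) you use the decomposition $(b,c)=(b,\nu(n))+s(u)$ with representatives $s(u)$, and in (2) you use the kernel $\langle k,0\rangle$ with representatives $(v,c)$, where $v$ is a representative for $\rho$. The fibre identification you single out as the main obstacle is the uniqueness of $n$ with $(x,\nu(n)+x)\in R_{\mathbb{E}_0}$, which follows from the cancellativity of $N$ as recalled in Section~3; the paper covers it under ``immediate to check'', and your derivation of it is fine.
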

\begin{proof}
\begin{enumerate}
\item For any $(b,c)\in B\rtimes_{\alpha}C$ and any representative $u\in C$ of $\pi(c),$  we have a (clearly unique) decomposition
\begin{equation*}
(b,c)=\big(b,\nu(n)+u\big)=\big(b,\nu(n)\big)+(0,u)=\big(b,\nu(n)\big)+s(u).
\end{equation*}
Thus ${\mathbb{E}}_1$ is a Schreier extension, with representatives $s(u),$ and $s$ preserves the representatives by construction. Since $ps=\lambda s=1_C,$ we conclude that $p$ and $\lambda$ preserve the representatives as well. Finally, since $B$ and $N$ are commutative and cancellative (using Lemma~\ref{lemma:B_cancellative}), the kernel $B\times N$ is also commutative and cancellative.
\item A kernel of $\la p,\lambda\ra$ is given by $\la k,0\ra\colon A\rightarrowtail B\rtimes_{\alpha}C,$ and it is immediate to check that $\xymatrix{{A} \ar@{>->}[r]^-{\la k,0\ra} &{ B\rtimes_{\alpha}C} \ar@{->>}[r]^-{\la p,\lambda\ra} &{R_{{\mathbb{E}}_0}}}$ is a Schreier extension with representatives $(u,c),$ for any $c\in C$ and any representative $u\in B$ for the Schreier extension $\rho.$
\end{enumerate}
\end{proof}
All this motivates the following:
\begin{definition}
\label{def:R-full}
A Schreier internal category \eqref{eqn:schreier_category_SExt_M_with_Es} in $cc\text{-}SExt_M$ is \emph{$\mathcal{R}$-full} if the induced morphism $\la d_X,c_X\ra\colon X_1\longrightarrow R_{\mathbb{E}_0}$ is a Schreier extension of monoids.
\end{definition}
By the results of this section, an $\mathcal{R}$-full Schreier internal category can be equivalently described as a crossed semimodule $(B\xlongrightarrow{\delta}C,\alpha)$ whose domain $B$ is commutative, and such that the corresponding crossed extension
\begin{equation*}
\xymatrix{
A \ar@{>->}[r]^-k &B \ar@{->>}[rd]_-{\rho} \ar[rr]^-{\delta} & \ &C \ar@{->>}[r]^-{\pi} &{M} \\
 &\ &N \ar@{>->}[ru]_-{\nu}&\ &\ &\
}
\end{equation*}
is a crossed Schreier extension in the sense of Definition \ref{def:crossed_Schreier_extension}. In this situation, $A$ and $N$ are necessarily also commutative, and $B$ is necessarily also cancellative (by Lemma \ref{lemma:B_cancellative}).

We now want to show that this correspondence can be made into an actual equivalence of categories, thanks to Theorem \ref{thm:equivalence_crossed_semimodules_schreier_categories}, by introducing appropriate notions of morphisms. Before doing this, we shall comment on how the notion of $\mathcal{R}$-full Schreier internal category relates to the one of  \emph{aspherical abelian groupoid} \cite{aspherical}, in the case of groups.
\begin{remark}
\label{rmk:aspherical_groupoids}
Let $\mathcal{C}$ be a category with finite limits, and denote by $1$ a terminal object in $\mathcal{C}.$ In the terminology of \cite{bourn-direction, aspherical}, one says that an object $X\in\mathcal{C}$ has \emph{global support} if the terminal morphism $X\longrightarrow1$ is a regular epimorphism, and an internal groupoid $\xymatrix{{\underline{X}: X_1\times_{X_0}X_1} \ar[r]^-{m_X} &{X_1} \ar@<.9ex>[r]^-{d_X} \ar@<-.9ex>[r]_-{c_X} &{X_0} \ar[l]|-{e_X}}$ in $\mathcal{C}$ is called \emph{aspherical} when it is \emph{connected} (meaning that the induced morphism $\la d_X,c_X\ra\colon X_1\longrightarrow X_0\times X_0$ is a regular epimorphism) and the object $X_0$ has global support. Moreover, denote by $\mathrm{Mal}(\mathcal{C})$ the category of internal Mal'tsev algebras in $\mathcal{C},$ i.e. pairs $(X,p)$ where $X\in\mathcal{C}$ and $p\colon X\times X\times X\rightarrow X$ is a Mal'tsev operation on $X$ (the morphisms being morphisms in $\mathcal{C}$ which preserve the Mal'tsev operation). If $\mathcal{C}$ is a Mal'tsev category \cite{CLP}, any $X\in\mathcal{C}$ admits at most one Mal'tsev operation, and can therefore be regarded as an object of $\mathrm{Mal}(\mathcal{C})$ in a unique way, provided that such a $p$ exists. Further, if $\mathcal{C}$ is a naturally Mal'tsev category \cite{johnstone}, every $X\in\mathcal{C}$ admits a natural Mal'tsev operation, so that $\mathcal{C}\cong\mathrm{Mal}(\mathcal{C}).$

 Now, in the general case where $\mathcal{C}$ is finitely complete, one can prove that the functor $\mathcal{O}\colon\mathrm{Gpd}(\mathcal{C})\rightarrow\mathcal{C},$ $\underline{X}\mapsto X_0,$ defined by sending a groupoid to its object of objects, is a fibration whose fibres $\mathcal{O}^{-1}(X_0)$ are Mal'tsev categories. Moreover, the fibres $\mathcal{O}^{-1}(X_0)$ are naturally Mal'tsev categories if $\mathcal{C}$ is a Mal'tsev category. In the terminology of \cite{aspherical}, a groupoid $\underline{X}$ is \emph{abelian} if it belongs to $\mathrm{Mal}\big(\mathcal{O}^{-1}(X_0)\big).$ Accordingly, every groupoid in a Mal'tsev category is abelian. (See for example \cite[Section 2.11]{borceux-bourn} for a proof of these facts.)

If $G$ is a group, an object  $f\colon X\longrightarrow G$ in the slice category $\mathcal{C}=\mathbf{Gp}/G$ has global support if and only if it is a surjective group homomorphism, and it is an internal Mal'tsev algebra if and only if its kernel is abelian (cf.~\cite{bourn-direction}). An internal groupoid $\underline{X}_G$ in $\mathcal{C},$ which is automatically abelian, and is the same as an internal category in $\mathcal{C}$ (because $\mathcal{C}$ is a Mal'tsev category, see~\cite{CPP}), is given by a commutative diagram
\begin{equation*}
\xymatrix{
{X_1} \ar[rd]_-{f_1} \ar@<.9ex>[rr]^-{d_X} \ar@<-.9ex>[rr]_-{c_X} &\ &{X_0} \ar[ld]^-{f_0} \ar[ll]|-{e_X} \\
&G
}
\end{equation*}
whose upper row is (the reflexive graph underlying) an internal groupoid in $\mathbf{Gp}.$

Observe that, in this situation, if $f_0$ has global support, the same is true for $f_1,$ because $f_1=f_0d_X$ and $d_X$ is surjective (it is a split epimorphism in $\mathbf{Gp}$). Then the groupoid $\underline{X}_G$ is (abelian and) aspherical precisely when the three morphisms $f_0,$ $f_1$ and $\la d_X,c_X\ra\colon X_1\longrightarrow\mathrm{Eq}(f_0)$ are surjective. Accordingly, $f_0$ and $f_1$ are in this context (trivially) Schreier extensions of $G.$ If moreover $K_1=ker(f_1)$ is abelian, which implies that $K_0=ker(f_0)$ is also abelian (because the split epimorphism $d_X\colon X_1\rightarrow X_0$ induces a split epimorphism ${d^{\star}_X}\colon K_1\rightarrow K_0$ between the kernels), the relation $\mathcal{R}_{\mathbb{E}_0}$ associated with $f_0$ is defined, and it coincides with the kernel pair $\mathcal{E}q(f_0).$

Thus, an $\mathcal{R}$-full Schreier internal category in the full subcategory $AbExt_G\subseteq cc\text{-}SExt_G$ of \emph{group} extensions of $G$ by an abelian kernel, in the sense of Definition \ref{def:R-full}, is the same as an aspherical (abelian) groupoid in $\mathrm{Mal}(\mathbf{Gp}/G).$
\end{remark}
Now, denote by $com\text{-}\mathrm{XSExt}_M$ the category whose objects are the crossed Schreier extensions \eqref{eqn:crossed_sequence_monoids} with fixed codomain $M,$ such that the domain $B$ of the crossed semimodule $\delta$ is commutative, and whose morphisms (following \cite{AMMPS}) are given by commutative diagrams
\begin{equation*}
\xymatrix{
A \ar@{>->}[r]^-k \ar[d]_-{\varphi^\star} &B \ar[r]^-{\delta} \ar[d]_-{\varphi} &C \ar@{->>}[r]^-{\pi} \ar[d]^-{\psi} &M\ar@{=}[d]  \\
{A'} \ar@{>->}[r]_-{k'} &{B'} \ar[r]_-{\delta'} &{C'} \ar@{->>}[r]_-{\pi'} &M
}
\end{equation*}
such that $(\varphi,\psi)$ is a morphism of crossed semimodules, and such that in the diagram
\begin{equation}
\label{eqn:morphism_crossed_Schreier_extensions_factorised}
\begin{aligned}
\xymatrixrowsep{1.3pc}
\xymatrix{
A \ar[dd]_-{\varphi^\star} \ar@{>->}[r]^-k &B \ar[dd]_-{\varphi} \ar@{->>}[rd]_-{\rho} \ar[rr]^-{\delta} & \ &C  \ar[dd]_-{\psi} \ar@{->>}[r]^-{\pi} &{M}  \ar@{=}[dd] \\
 &\ &N \ar[dd]_(.20){\psi^\star} \ar@{>->}[ru]_-{\nu}&\ &\ &\ \\
{A'} \ar@{>->}[r]^-{k'} &{B'} \ar@{->>}[rd]_-{\rho'} \ar[rr]^(.70){\delta'} & \ &{C'} \ar@{->>}[r]^-{\pi'} &{M,} \\
 &\ &{N'} \ar@{>->}[ru]_-{\nu'}&\ &\ &\
}
\end{aligned}
\end{equation}
where $\psi^\star$ is induced by the universal property of the kernel $(N',\nu'),$ both
\begin{equation*}
\begin{aligned}
{\xymatrix{
A \ar@{>->}[r]^-{k} \ar[d]_-{\varphi^\star} &{B} \ar@{->>}[r]^-{\rho} \ar[d]_-{\varphi} &{N} \ar[d]^-{\psi^\star} \\
{A'} \ar@{>->}[r]_-{k'} &{B'} \ar@{->>}[r]_-{\rho'} &{N',}
}}
\
\
{\xymatrix{
N \ar@{>->}[r]^-{\nu} \ar[d]_-{\psi^\star} &{C} \ar@{->>}[r]^-{\pi} \ar[d]_-{\psi} &{M} \ar@{=}[d]\\
{N'} \ar@{>->}[r]_-{\nu'} &{C'} \ar@{->>}[r]_-{\pi'} &{M,}
}}
\end{aligned}
\end{equation*}
are morphisms of $cc$-Schreier extensions.

(A note of warning: from now on, the notation $()^\star$ shall be used tacitly to indicate the morphism induced by the universal property of some appropriate kernel.)

Denote also by $\mathcal{R}\mathrm{SCat}(cc\text{-}SExt_M)$ the category whose objects are the $\mathcal{R}$-full Schreier internal categories
\begin{equation*}
\begin{aligned}
{
\xymatrixcolsep{0.1pc}
\xymatrix{
{{\mathbb{E}}_1}  \ar@<-.99ex>[d]_{d_X} \ar@<.99ex>[d]^{c_X} &{:}\\
{{\mathbb{E}}_0} \ar[u]|{e_X} &{:}
}
}
{\xymatrixcolsep{2.5pc}\xymatrix{
{K_1} \ar@<-.99ex>[d]_{d^\star_X} \ar@<.99ex>[d]^{c^\star_X} \ar@{>->}[r]^-{k_1} &{X_1} \ar@<-.99ex>[d]_{d_X} \ar@<.99ex>[d]^{c_X} \ar@{->>}[r]^-{f_1} &{M} \ar@{=}[d] \\
{K_0} \ar[u]|{e^\star_X} \ar@{>->}[r]_-{k_0} &{X_0} \ar[u]|{e_X} \ar@{->>}[r]_-{f_0} &{M}
}}
\end{aligned}
\end{equation*}
and whose morphisms
\begin{equation}
\label{eqn:morphism_of_R-full}
\begin{aligned}
{\xymatrixcolsep{0.1pc}
\xymatrix{
{\mathcal{E}} \ar[d] &{:}\\
{\mathcal{L}} &{:}
}
}
{\xymatrix{
{{\mathbb{E}}_1} \ar[d]_-{h_1} \ar@<.9ex>[r]^-{d_X} \ar@<-.9ex>[r]_-{c_X} &{{\mathbb{E}}_0} \ar[d]^-{h_0} \ar[l]|{e_X} \\
{{\mathbb{L}}_1} \ar@<.9ex>[r]^-{d_Y} \ar@<-.9ex>[r]_-{c_Y} &{{\mathbb{L}}_0} \ar[l]|{e_Y}
}}
\end{aligned}
\end{equation}
are the internal functors
\begin{equation*}
\xymatrix{
{X_1} \ar[d]_-{h_1} \ar@<.9ex>[r]^-{d_X} \ar@<-.9ex>[r]_-{c_X} &{X_0} \ar[d]^-{h_0} \ar[l]|{e_X} \\
{Y_1} \ar@<.9ex>[r]^-{d_Y} \ar@<-.9ex>[r]_-{c_Y} &{Y_0} \ar[l]|{e_Y}
}
\end{equation*}
in $\mathrm{SCat}(\mathbf{Mon})$ such that
\begin{equation*}
\begin{aligned}
{\xymatrix{
{{\mathbb{E}}_0:K_0} \ar@<1.7ex>[d]_-{h^\star_0} \ar@{>->}[r]^-{k_0} &{X_0} \ar[d]^-{h_0} \ar@{->>}[r]^-{f_0} &M \ar@{=}[d]\\
{{\mathbb{L}}_0:L_0} \ar@{>->}[r]_-{l_0} &{Y_0} \ar@{->>}[r]_-{g_0} &{M,}
}}
\
\
\
{\xymatrix{
{{\mathbb{E}}_1:K_1} \ar@<1.7ex>[d]_-{h^\star_1} \ar@{>->}[r]^-{k_1} &{X_1} \ar[d]^-{h_1} \ar@{->>}[r]^-{f_1} &M \ar@{=}[d]\\
{{\mathbb{L}}_1:L_1} \ar@{>->}[r]_-{l_1} &{Y_1} \ar@{->>}[r]_-{g_1} &{M,}
}}
\end{aligned}
\end{equation*}
and also the diagram
\begin{equation}
\label{eqn:condition_on_R}
\begin{aligned}
\xymatrixcolsep{3pc}
{\xymatrix{
{A} \ar[d]_-{h^\star_1} \ar@{>->}[r]^-{k_X} &{X_1} \ar[d]^-{h_1} \ar@{->>}[r]^-{\la d_X,c_X\ra} &{R_{{\mathbb{E}}_0}} \ar[d]^-{h_0\times h_0}\\
{A'} \ar@{>->}[r]_-{k_Y} &{Y_1} \ar@{->>}[r]_-{\la d_Y,c_Y\ra} &{R_{{\mathbb{L}}_0},}
}}
\end{aligned}
\end{equation}
stemming out of the fact that $\mathcal{E}$ and $\mathcal{L}$ are $\mathcal{R}$-full, are morphisms of Schreier extensions. (Of course, in \eqref{eqn:condition_on_R} $(A,k_X)$ and $(A',k_Y)$ are kernels of $\la d_X,c_X\ra$ and $\la d_Y,c_Y\ra,$ respectively.)

Then, consider the functor
\begin{equation}
\label{eqn:F}
F\colon\mathcal{R}\mathrm{SCat}(cc\text{-}SExt_M)\longrightarrow com\text{-}\mathrm{XSExt}_M
\end{equation}
mapping $\mathcal{E},$ $\mathcal{L}$ and a morphism \eqref{eqn:morphism_of_R-full} to
\begin{equation*}
\xymatrixrowsep{1.8pc}
\xymatrix{
A \ar[dd]_-{h^\star_1} \ar@{>->}[r]^-k &{K[d_X]} \ar[dd]_-{h^\star_1} \ar@{->>}[rd]_-{g} \ar[rr]^-{\delta=c_Xk_d} & \ &{X_0}  \ar[dd]^-{h_0} \ar@{->>}[r]^-{f_0} &{M}  \ar@{=}[dd] \\
 &\ &{K_0} \ar[dd]_(.20){h^\star_0} \ar@{>->}[ru]_-{k_0}&\ &\ &\ \\
{A'} \ar@{>->}[r]^-{k'} &{K[d_Y]} \ar@{->>}[rd]_-{g'} \ar[rr]^(.70){\delta'=c_Yl_d} & \ &{Y_0} \ar@{->>}[r]^-{g_0} &{M,} \\
 &\ &{L_0} \ar@{>->}[ru]_-{l_0}&\ &\ &\
}
\end{equation*}
where $(K[d_X],k_d)$ is a kernel of $d_X,$ $(K[d_Y],l_d)$ is a kernel of $d_Y,$ $(A,k)$ is a kernel of $\delta=c_Xk_d$ and $(A',k')$ is a kernel of $\delta'=c_Yl_d.$

We already know that $F$ is well defined on the objects, and by Theorem \ref{thm:equivalence_crossed_semimodules_schreier_categories} $F(h_1,h_0)$ is indeed a morphism of crossed extensions. Moreover, the diagram
\begin{equation*}
{\xymatrix{
{K_0} \ar[d]_-{h^\star_0} \ar@{>->}[r]^-{k_0} &{X_0} \ar[d]^-{h_0} \ar@{->>}[r]^-{f_0} &M \ar@{=}[d]\\
{L_0} \ar@{>->}[r]_-{l_0} &{Y_0} \ar@{->>}[r]_-{g_0} &{M,}
}}
\end{equation*}
is a morphism of Schreier extensions, by our definition of morphism in the category $\mathcal{R}\mathrm{SCat}(cc\text{-}SExt_M),$ and the fact that the same is true for
\begin{equation*}
{\xymatrix{
{A} \ar[d]_-{h^\star_1} \ar@{>->}[r]^-{k} &{K[d_X]} \ar[d]^-{h^\star_1} \ar@{->>}[r]^-{g} &{K_0} \ar[d]^-{h^\star_0}\\
{A'} \ar@{>->}[r]_-{k'} &{K[d_Y]} \ar@{->>}[r]_-{g'} &{L_0}
}}
\end{equation*}
comes at once from Proposition \ref{prop:g_is_Schreier} and the fact that \eqref{eqn:condition_on_R} is a morphism of Schreier extensions.

Thus, $F$ is well defined also on the arrows, and its functoriality is established by the fact that it is essentially the same as the functor $\mathrm{SCat}(\mathbf{Mon})\rightarrow\mathbf{Xsmod}$ in Theorem \ref{thm:equivalence_crossed_semimodules_schreier_categories}.

Conversely, define
\begin{equation}
\label{eqn:G}
G\colon com\text{-}\mathrm{XSExt}_M\longrightarrow\mathcal{R}\mathrm{SCat}(cc\text{-}SExt_M)
\end{equation}
by sending \eqref{eqn:morphism_crossed_Schreier_extensions_factorised} to
\begin{equation*}
{\xymatrix{
{{\mathbb{E}}_1} \ar[d]_-{\varphi\rtimes\psi} \ar@<.9ex>[r]^-{p} \ar@<-.9ex>[r]_-{\lambda} &{{\mathbb{E}}_0} \ar[d]^-{\psi} \ar[l]|{s} \\
{{\mathbb{L}}_1} \ar@<.9ex>[r]^-{p'} \ar@<-.9ex>[r]_-{\lambda'} &{{\mathbb{L}}_0,} \ar[l]|{s'}
}}
\end{equation*}
where
\begin{itemize}
\item[] $\xymatrix{{{\mathbb{E}}_0:N} \ar@{>->}[r]^-{\nu} &{C} \ar@{->>}[r]^-{\pi} &{M,}}$
\item[] $\xymatrixcolsep{4.5pc}\xymatrix{{{\mathbb{E}}_1:B\times N } \ar@{>->}[r]^-{1_B\times\nu} &{B\rtimes_{\alpha}C} \ar@{->>}[r]^-{\pi_1=\pi p=\pi\lambda} &{M,}}$
\item[] $\xymatrix{{{\mathbb{L}}_0:N'} \ar@{>->}[r]^-{\nu'} &{C'} \ar@{->>}[r]^-{\pi'} &{M,}}$
\item[] $\xymatrixcolsep{4.5pc}\xymatrix{{{\mathbb{L}}_1:B'\times N'} \ar@{>->}[r]^-{1_{B'}\times\nu'} &{B'\rtimes_{\alpha'}C'} \ar@{->>}[r]^-{\pi'_1=\pi' p'=\pi'\lambda'} &{M,}}$
\end{itemize}
and $p,$ $s,$ $\lambda$ are as in \eqref{eqn:associated_category} (and similarly for $p',$ $s'$ and $\lambda'$).

Then we know from Theorem \ref{thm:equivalence_crossed_semimodules_schreier_categories} that if $\alpha\colon C\rightarrow\mathrm{End}(B)$ and $\alpha'\colon C'\rightarrow\mathrm{End}(B')$ are the actions in the crossed semimodules $\delta$ and $\delta',$ the diagram
\begin{equation*}
\xymatrix{
{B\rtimes_{\alpha}C} \ar[d]_-{\varphi\rtimes\psi} \ar@<.9ex>[r]^-{p} \ar@<-.9ex>[r]_-{\lambda} &{C} \ar[d]^-{\psi} \ar[l]|-{s} \\
{B'\rtimes_{\alpha'}C'} \ar@<.9ex>[r]^-{p'} \ar@<-.9ex>[r]_-{\lambda'} &{C'} \ar[l]|-{s'}
}
\end{equation*}
is a morphism of internal Schreier categories in $\mathbf{Mon},$ and by the assumption
\begin{equation*}
\xymatrix{
N \ar@{>->}[r]^-{\nu} \ar[d]_-{\psi^\star} &{C} \ar@{->>}[r]^-{\pi} \ar[d]^-{\psi} &{M} \ar@{=}[d]\\
{N'} \ar@{>->}[r]_-{\nu'} &{C'} \ar@{->>}[r]_-{\pi'} &{M}
}
\end{equation*}
is a morphism of Schreier extensions. Moreover, ${\mathbb{E}}_1$ is a Schreier extension by Proposition \ref{prop:semidirect_is_Schreier}$(1),$ with representatives $s(u)=(0,u)$ for any representative $u\in C$ for ${\mathbb{E}}_0,$ and similarly for ${\mathbb{L}}_1.$ Thus, the fact that $\psi$ preserves the representatives in the diagram above entails that
\begin{equation*}
\xymatrixcolsep{3pc}
\xymatrix{
{B\times N } \ar[d]_-{(\varphi\rtimes\psi)^\star=\varphi\times \psi^\star} \ar@{>->}[r]^-{1_B\times\nu} &{B\rtimes_{\alpha}C}  \ar[d]^-{\varphi\rtimes\psi} \ar@{->>}[r]^-{\pi_1} &{M} \ar@{=}[d] \\
{B'\times N'} \ar@{>->}[r]_-{1_{B'}\times\nu'} &{B'\rtimes_{\alpha'}C'} \ar@{->>}[r]_-{\pi'_1} &{M}
}
\end{equation*}
is also a morphism of Schreier extensions.

Finally, the fact that by assumption
\begin{equation*}
\xymatrix{
A \ar@{>->}[r]^-{k} \ar[d]_-{\varphi^\star} &{B} \ar@{->>}[r]^-{\rho} \ar[d]^-{\varphi} &{N} \ar[d]^-{\psi^\star}\\
{A'} \ar@{>->}[r]_-{k'} &{B'} \ar@{->>}[r]_-{\rho'} &{N'}
}
\end{equation*}
is a morphism of Schreier extensions guarantees that the diagram
\begin{equation*}
\xymatrixcolsep{3pc}
\xymatrix{
A \ar@{>->}[r]^-{\la k,0\ra} \ar[d]_-{\varphi^\star} &{B\rtimes_{\alpha}C} \ar@{->>}[r]^-{\la p,\lambda\ra} \ar[d]^-{\varphi\rtimes\psi} &{R_{{\mathbb{E}}_0}} \ar[d]^-{\psi\times\psi}\\
{A'} \ar@{>->}[r]_-{\la k',0\ra} &{B'\rtimes_{\alpha'}C'} \ar@{->>}[r]_-{\la p',\lambda'\ra}  &{R_{{\mathbb{L}}_0},}
}
\end{equation*}
whose rows are Schreier extensions by Proposition \ref{prop:semidirect_is_Schreier}$(2),$ is also a morphism of Schreier extensions. (Recall that any couple $(u,c),$ such that $c\in C$ and $u\in B$ is a representative for $\rho,$ is a representative for the upper row, and similarly for the lower one.)

Accordingly, as was for $F,$ $G$ is a well defined functor, and by Theorem \ref{thm:equivalence_crossed_semimodules_schreier_categories} we have:
\begin{theorem}
\label{thm:equivalence_RSCat-XSExt}
For any monoid $M,$ the above functors \eqref{eqn:F} and \eqref{eqn:G} yield an equivalence of categories
\begin{equation*}
\mathcal{R}\mathrm{SCat}(cc\text{-}SExt_M)\cong com\text{-}\mathrm{XSExt}_M
\end{equation*}
between the category of $\mathcal{R}$-full Schreier internal categories in $cc\text{-}SExt_M,$ in the sense of Definition \ref{def:R-full} with the morphisms \eqref{eqn:morphism_of_R-full}, and the category of crossed Schreier extensions
\begin{equation*}
\xymatrix{
A \ar@{>->}[r]^-k &B \ar[r]^-{\delta} &C \ar@{->>}[r]^-{\pi} &M
}
\end{equation*}
in the sense of \cite{AMMPS}, with $B$ commutative.
\end{theorem}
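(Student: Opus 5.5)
The plan is to deduce the equivalence from the equivalence $\mathbf{Xsmod}\cong\mathrm{SCat}(\mathbf{Mon})$ of Theorem~\ref{thm:equivalence_crossed_semimodules_schreier_categories}. The preceding discussion already shows that $F$ and $G$ are well-defined functors. So it remains to produce natural isomorphisms $FG\cong 1$ and $GF\cong 1$, and to check that their components are morphisms in the respective categories, i.e. that they satisfy the Schreier-preservation conditions. An isomorphism of $\mathrm{SCat}(\mathbf{Mon})$ or of $\mathbf{Xsmod}$ is automatically an isomorphism in our categories once we know it and its inverse preserve representatives. Moreover, an isomorphism commuting with the projections to $M$, or with the maps to $R_{\mathbb{E}_0}$, maps representatives to representatives, because it induces a bijection on fibres and on kernels. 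I would therefore aim to show that each required component is an isomorphism compatible with all the structure maps, and then read off preservation of representatives from this.

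For $FG$, start from a crossed Schreier extension with crossed semimodule $(B\xrightarrow{\delta}C,\alpha)$. Then $G$ gives $B\rtimes_\alpha C\rightrightarrows C$ over $M$ via $\pi$, and $F$ returns $K[p]\xrightarrow{\lambda k_p}C$. Here $K[p]=\{(b,0)\}\cong B$, with $\lambda(b,0)=\delta(b)$, and the action is conjugation by $s(c)=(0,c)$, which recovers $\alpha$. This is exactly the unit of Theorem~\ref{thm:equivalence_crossed_semimodules_schreier_categories}. Next, the factorisation $\delta=\nu\rho$ is recovered as $k_0 g$ with $K_0=N$, $k_0=\nu$, and $g=\rho$ by uniqueness of the factorisation through the kernel of $\pi$. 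The kernel $A$ is unchanged. Hence the components are $(1_A\text{ up to iso},\,B\cong K[p],\,1_C)$, and naturality follows from that in Theorem~\ref{thm:equivalence_crossed_semimodules_schreier_categories}.

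For $GF$, start from $\mathcal{E}$ in $\mathcal{R}\mathrm{SCat}(cc\text{-}SExt_M)$. The comparison $\xi\colon X_1\to K[d_X]\rtimes_\alpha X_0$ from Theorem~\ref{thm:equivalence_crossed_semimodules_schreier_categories}, together with $1_{X_0}$, is an isomorphism of internal categories in $\mathbf{Mon}$. It commutes with the maps to $M$, since $\pi_1=f_0p$ and $f_0p\xi=f_0d_X=f_1$. It also commutes with $\langle d_X,c_X\rangle$ and $\langle p,\lambda\rangle$, as in the proof of Proposition~\ref{prop:d_X,c_X-inside_R}. The main point to verify is that $\xi$ and $\xi^{-1}$ are morphisms of Schreier extensions in the three relevant senses: for $\mathbb{E}_1$, for $\mathbb{E}_0$ (trivially, since the component there is the identity), and for diagram~\eqref{eqn:condition_on_R}. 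As noted above, an isomorphism over $M$ (respectively over $R_{\mathbb{E}_0}$) restricts to bijections of fibres and isomorphisms of kernels, so it sends the unique decompositions $x=k(a)+u$ to decompositions of the same form, with invertibility of $a$ preserved. Hence representatives go to representatives in both directions.

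The step I expect to be the main obstacle is the identification in $GF$ of the kernel $K_1$ of $f_1$ with $K[d_X]\times K_0$, compatibly with $\mathbb{E}_1$ as built by $G$ (that is, with $1_B\times\nu$). This rests on Lemma~\ref{lemma:B_cancellative}(2), which says the semidirect product over $N=K_0$ is direct. It also requires checking that the representatives $s(u)=e_X(u)$ coincide with the given representatives of $\mathbb{E}_1$, which holds because $e_X$ preserves representatives. Once this is settled, naturality of both isomorphisms is inherited from Theorem~\ref{thm:equivalence_crossed_semimodules_schreier_categories}, and this proves the theorem.
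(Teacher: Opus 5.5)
Your proposal is correct and follows the paper's route. The paper likewise checks that $F$ and $G$ are well-defined functors and then deduces the equivalence directly from Theorem~\ref{thm:equivalence_crossed_semimodules_schreier_categories}. It leaves implicit the verification you spell out: the components of the two comparison isomorphisms commute with the relevant quotient maps (to $M$, to $N$, or to $R_{\mathbb{E}_0}$), so they and their inverses automatically preserve representatives.

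The step you flag as the main obstacle is in fact automatic. Since $\xi$ is an isomorphism over $M$, it restricts to an isomorphism of kernels $K_1\cong K[d_X]\times K_0$. The direct-product form of $\ker\pi_1$ was already settled via Lemma~\ref{lemma:B_cancellative}(2) when $G$ was constructed.
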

\section{Definition of the $1$-dimensional direction functor for $cc\text{-}SExt_M$}
Let $G$ be a group. It is well known that the process of associating with a crossed extension \eqref{eqn:crossed_sequence} the $G$-module occurring from the induced action \eqref{eqn:induced_action_crossed_sequence} is functorial, yielding a functor
\begin{equation}
\label{eqn:action_groups}
\mathrm{XExt}_G\longrightarrow\mathrm{Mod}_G.
\end{equation}
By the results of \cite{AMMPS} recalled in Section \ref{sec:crossed_semimodules}, the same procedure is possible when $G$ is replaced with a monoid $M,$ and the category $\mathrm{XExt}_G$ of crossed extensions ending in $G$ is replaced with the category $\mathrm{XSExt}_M$ of crossed Schreier extensions ending in $M.$ Coherently, the codomain of \eqref{eqn:action_groups} has to be replaced with the category $can\text{-}\mathcal{S}\mathrm{mod}_M$ of cancellative $M$-semimodules (i.e., pairs $(A,\alpha)$ where $A$ is a commutative and cancellative monoid and $\alpha$ is a monoid action of $M$ on $A,$ with action-preserving monoid homomorphisms), and we end up with a functor
\begin{equation}
\label{eqn:action_monoids}
\mathrm{XSExt}_M\longrightarrow can\text{-}\mathcal{S}\mathrm{mod}_M.
\end{equation}
(See \emph{infra}, Proposition \ref{prop:functor_D_1}, for the functoriality of this construction.)

Now, it is shown in \cite{aspherical} that the functor \eqref{eqn:action_groups} enters the bigger picture of a general functor
\begin{equation}
\label{eqn:1-direction-bourn}
d_1\colon\mathrm{AAGpd}(\mathcal{C})\longrightarrow\mathbf{Ab}(\mathcal{C}),
\end{equation}
defined for a Barr-exact category $\mathcal{C},$ which is a $1$-dimensional development of the ($0$-dimensional) direction functor
\begin{equation}
\label{eqn:0-direction-bourn}
d=d_0\colon\mathrm{AutMal}({\mathcal{C}}_g)\longrightarrow\mathbf{Ab}(\mathcal{C})
\end{equation}
introduced in \cite{bourn-direction}. (Here $\mathrm{AutMal}({\mathcal{C}}_g)\subseteq\mathrm{Mal}({\mathcal{C}}_g)$ denotes the full subcategory of the Mal'tsev algebras $(X,p)$ whose Mal'tsev operation $p$ is \emph{autonomous}, meaning that it is itself a morphism of Mal'tsev algebras: see \cite{bourn-direction} for further details.)

The category appearing as the domain of $d_1$ is the category of aspherical abelian ($1$-)groupoids in the full subcategory $\mathcal{C}_g$ of objects with global support in a Barr-exact category $\mathcal{C},$ in the sense recalled in Remark \ref{rmk:aspherical_groupoids}.

The denomination $1$\emph{-dimensional} comes from the fact that, when $\mathcal{C}$ is a naturally Mal'tsev category, $d_1$ corresponds to the case $n=1$ of a family of direction functors $\{d_n\}_{n\in\mathbb{N}},$ where each $d_n$ is defined on the category on aspherical (abelian) $n$-groupoids in $\mathcal{C}$ (see \cite{Bourn-Rodelo}).

In this section, we shall prove that the functor \eqref{eqn:action_monoids} can also be framed in terms of a suitable $1$-dimensional version of the ($0$-dimensional) direction functor
\begin{equation}
\label{eqn:direction_functor}
d=d_0\colon cc\text{-}SExt_M\longrightarrow\mathbf{CMon}(cc\text{-}SExt_M)
\end{equation}
introduced in \cite{AMR}, and which was seen to provide a generalisation to $cc\text{-}SExt_M$ of the classical direction functor \cite{bourn-direction} applied to the slice category $\mathbf{Gp}/G.$ This new functor will similarly provide a generalisation of the functor \eqref{eqn:1-direction-bourn}, when the latter is applied to $\mathcal{C}=\mathbf{Gp}/G.$
\subsection{The $0$-dimensional direction functor for $cc\text{-}SExt_M$}
We begin by recalling the definition and main properties of \eqref{eqn:direction_functor}.

For any object $\xymatrix{{\mathbb{E}:K} \ar@{>->}[r]^-{k} &X \ar@{->>}[r]^-{f} &M}$ in $cc\text{-}SExt_M,$ one considers the \emph{Chasles relation} $\Ch{E}$ on $R_{\mathbb{E}},$ defined by:
\begin{equation*}
\big(x,k(a)+x\big)\Ch{E}\big(y,k(b)+y\big) \Longleftrightarrow (x,y)\in R_{\mathbb{E}} \ \text{ and } \ a=b.
\end{equation*}
It is a reflexive and transitive relation in $\mathbf{Mon}$ (and it is symmetric if and only if $\rel{E}$ is), represented by the graph
\begin{equation*}
\xymatrixcolsep{4pc}
\xymatrix{\Ch{E}: \P{E} \ar@<.9ex>[r]^-{\pi_1=\la r_1p_1,p\ra} \ar@<-.9ex>[r]_-{p_2} &{\R{E}} \ar[l]|-{\sigma_0}}
\end{equation*}
where $P_{\mathbb{E}}$ is a pullback
\begin{equation*}
\xymatrix{
{\P{E}} \pullback \ar[r]^-{p_2} \ar[d]_-{p_1}  &{\R{E}} \ar[d]^-{r_1} \\
{\R{E}} \ar[r]_{r_2}  &{X,}
}
\end{equation*}
$\sigma_0$ is the morphism defined by $\sigma_0\big(x,k(a)+x\big)=\big(x,x,k(a)+x\big),$ and moreover $p:P_{\mathbb{E}}\longrightarrow X,$ $p\big(x,k(a)+x,k(a)+k(b)+x\big)=k(b)+x,$ is the connector arising from the fact that the relation $\rel{E}$ is self-centralising (see \cite[Proposition 5.7]{AMR}).

Denoting by $\xymatrixcolsep{1.5pc}\xymatrix{{\gamma\colon R_{\mathbb{E}}} \ar@{->>}[r] &{df}}$ a quotient of $\Ch{E},$ one gets by the universal property of the coequalisers $\gamma$ and $f$ a commutative diagram
\begin{equation*}
\xymatrixcolsep{3pc}
\xymatrixrowsep{3pc}
\xymatrix{
{\P{E}} \ar@<-.9ex>[d]_-{p_1} \ar@<.9ex>[d]^-{\pi_2} \ar@<.9ex>[r]^-{\pi_1} \ar@<-.9ex>[r]_-{p_2} &{\R{E}} \ar[l]|{\sigma_0} \ar@<-.9ex>[d]_-{r_1} \ar@<.9ex>[d]^-{r_2} \ar@{->>}[r]^-{\gamma} &{df} \ar@<-.5ex>[d]_-{\overline{f}}\\
{\R{E}} \ar[u]|{\sigma_1} \ar@<.9ex>[r]^-{r_1} \ar@<-.9ex>[r]_-{r_2} &{X} \ar[l]|{s_0} \ar[u]|{s_0} \ar@{->>}[r]_-f &M \ar@<-.5ex>[u]_-{\overline{s}}
}
\end{equation*}
where $\overline{f}\colon\gamma\big(x,k(a)+x\big)\mapsto f(x)$ and $\overline{s}\colon m=f(x)\mapsto\gamma(x,x).$ The morphisms $\pi_2$ and $\sigma_1$ in the left-hand side are given by $\pi_2=\la p,r_2p_2\ra$ and $\sigma_1\colon\big(x,k(a)+x\big)\mapsto\big(x,k(a)+x,k(a)+x\big),$ respectively.

One can prove, then, that the couple $(\overline{f},\overline{s})$ is a Schreier point on $M$ (\cite[Proposition 6.3]{AMR}), and thanks to the equivalence $cc\text{-}SPt_M\cong\mathbf{CMon}(cc\text{-}SExt_M)$ between the categories of Schreier points on $M$ having a commutative and cancellative kernel and of internal commutative monoids in $cc\text{-}SExt_M$ (\cite[Theorem 6.5]{AMR}), the point $(\overline{f},\overline{s})$ may be also described as an internal commutative monoid $\overline{f}$ in $cc\text{-}SExt_M,$ with neutral element $\overline{s}$ and monoid operation
\begin{equation}
\label{eqn:monoid_operation_on_direction}
\begin{aligned}
\begin{split}
\gamma\big(x,k(a)+x\big)\cdot\gamma\big(y,k(b)+y\big)&=\gamma\big(x,k(a)+k(b)+x\big)\\
&=\gamma\big(y,k(a)+k(b)+y\big),
\end{split}
\end{aligned}
\end{equation}
defined whenever $(x,y)\in\mathrm{Eq}(f)$ (see \cite[Corollary 6.8]{AMR}).

Thus, we define on the objects $d(\mathbb{E})=(\overline{f},\overline{s}),$ and by resorting to the universal property of the coequaliser to extend the definition of $d$ to the arrows, one obtains a functor \eqref{eqn:direction_functor}, which is called the \emph{direction functor for} $cc$\emph{-Schreier extensions} (because it extends the classical direction functor \cite{bourn-direction} applied to the slice category $\mathbf{Gp}/G,$ as already mentioned; see \cite{AMR} for details).

The functor $d$ admits an equivalent description in terms of actions. Indeed, recall that any $cc$-Schreier extension $\xymatrix{{\mathbb{E}:K} \ar@{>->}[r]^-{k} &X \ar@{->>}[r]^-{f} &M}$ determines a monoid action of $M$ on $K$:
\begin{equation}
\label{eqn:action_induced_by_Schreier_ext}
\eta\colon M\times K\dashrightarrow K, \ (m,a)\mapsto m\ast a \ \text{ with } u_m+k(a)=k(m\ast a)+u_m,
\end{equation}
where $u_m$ is any representative of $m$ (see for example \cite[Remark 4.20]{P-II} or \cite[Section 3]{AMR}). At the same time, there is an equivalence between Schreier points on $M$ and monoid actions of $M,$ by which a Schreier point $\xymatrix{B \ar@<.5ex>[r]^-g &M \ar@<.5ex>[l]^-s}$ corresponds to the action
\begin{equation}
\label{eqn:action_induced_by_Schreier_point}
M\times K[g]\dashrightarrow K[g], \ (m,a)\mapsto m\bullet a \ \text{ with } s(m)+k(a)=k(m\bullet a)+s(m),
\end{equation}
where $(K[g],k)$ is a kernel of $g$ (see \cite{schreier_book}, Proposition 5.2.2).

In the case of $d(\mathbb{E})=(\overline{f},\overline{s}),$ the two actions \eqref{eqn:action_induced_by_Schreier_ext}, \eqref{eqn:action_induced_by_Schreier_point} coincide (\cite[Proposition 6.10]{AMR}), and the functor $d$ can be described as the action functor
\begin{equation}
\label{eqn:d_action_guise}
d\colon cc\text{-}SExt_M\longrightarrow can\text{-}\mathcal{S}\mathrm{mod}_M, \  \mathbb{E}\longmapsto (K,\eta).
\end{equation}
Finally, it is proven in \cite[Section 7]{AMR} that $d$ is a conservative cofibration, which preserves finite products, and preserves and reflects monomorphisms and regular epimorphisms. As a consequence, the fibres of $d$ are endowed with a natural symmetric monoidal structure (\cite[Corollary 7.9]{AMR}), and it follows that the commutative monoid obtained from the equivalence classes of connected components of the fibres $d^{-1}(K,\eta)$ are precisely Patchkoria's second cohomology monoids $H^2(M,K,\eta),$ as described in \cite{P-I, P-II}.
\subsection{The $1$-dimensional direction functor for $cc\text{-}SExt_M$}
Now, to understand the idea underlying the construction that follows, recall from \cite{Bourn-Rodelo} that the functor $d_1$ in \eqref{eqn:1-direction-bourn} is defined from the functor $d=d_0$ in \eqref{eqn:0-direction-bourn} as follows. If $\mathcal{C}$ is a naturally Mal'tsev and Barr-exact category, then the image $d_1(\underline{X})$ of an aspherical (abelian) groupoid $\xymatrix{{\underline{X}: X_1} \ar@<.9ex>[r]^-{d_X} \ar@<-.9ex>[r]_-{c_X} &{X_0} \ar[l]|-{e_X}}$ under the functor \eqref{eqn:1-direction-bourn} (namely, its \emph{direction}) is given by a kernel $A=Ker\big(d(\la d_X,c_X\ra)\big)$ in $\mathbf{Ab}(\mathcal{C})$ of the regular epimorphism $\xymatrixcolsep{1pc}\xymatrix{{d(\la d_X,c_X\ra)\colon d(X_1)} \ar@{->>}[r] &{d(X_0)\times d(X_0)}}$ obtained by applying $d$ to the induced morphism $\xymatrix{\la d_X,c_X\ra\colon X_1 \ar@{->>}[r] &{X_0\times X_0.} }$

We know from Remark \ref{rmk:aspherical_groupoids} that, in the case where $\mathcal{C}=\mathrm{Mal}(\mathbf{Gp}/G)$ (so that $\mathcal{C}_g\cong AbExt_G$), such an aspherical groupoid $\underline{X}$ is precisely an $\mathcal{R}$-full Schreier internal category in $AbExt_G,$ in the sense of Definition \ref{def:R-full}: thus, our first step in the attempt to define a $1$-dimensional analogue $d_1$ for the functor \eqref{eqn:direction_functor} will be to make precise the idea of applying $d$ to the Schreier extension $\xymatrixcolsep{1.5pc}\xymatrix{{\la d_X,c_X\ra\colon X_1}\ar@{->>}[r] &{R_{ {\mathbb{E}}_0 }} }$ associated to an $\mathcal{R}$-full Schreier internal category \eqref{eqn:schreier_category_SExt_M_with_Es} in $cc\text{-}SExt_M.$

Fix an object $\xymatrix{{\mathbb{E}:K} \ar@{>->}[r]^-{k} &{X} \ar@{->>}[r]^-{f} &M}$ in $cc\text{-}SExt_M.$ By \cite[Proposition 5.6]{AMR}, the relation $\xymatrix{ {\mathcal{R}_{\mathbb{E}}:R_{\mathbb{E}}}\ar@<.9ex>[r]^-{r_1} \ar@<-.9ex>[r]_-{r_2} &{X} \ar[l]|-{s_0}}$ determines another monoid extension
\begin{equation}
\label{eqn:R-hat}
\xymatrixcolsep{4pc}
\xymatrix{{\widehat{R}_{\mathbb{E}}:K\times K} \ar@{>->}[r]^-{\widehat{k}} &{R_{\mathbb{E}}} \ar@{->>}[r]^-{\widehat{f}=fr_1=fr_2} &M}
\end{equation}
on $M,$ with $\widehat{k}(a,b)=\big(k(a),k(b)+k(a)\big),$ which is a Schreier extensions with representatives $(u_m,u_m)$ (where $u_m$ is a representative of $\mathbb{E}$).

A few considerations are in order, about this Schreier extension.
\begin{remarks}
\label{rmks:R-hat}
\begin{enumerate}
\item If $\mathbb{E}$ is a Schreier point on $M,$ with $f$ split by $t\colon M\longrightarrow X,$ then in \eqref{eqn:R-hat} $\widehat{f}=fr_1=fr_2$ is split by $s_0t\colon M\longrightarrow R_{\mathbb{E}},$ so that $\widehat{R}_{\mathbb{E}}$ is also a Schreier point.
\item Every pair $(u_m,v_m),$ with $u_m$ and $v_m$ (possibly different) representatives for $m$ in $\mathbb{E},$ is actually a representative for $m$ in $\widehat{R}_{\mathbb{E}},$ because in this case $(u_m,v_m)=\widehat{k}(0,a)+(u_m,u_m)$ for a unique invertible element $a\in K.$
\item We already observed that if $(\mathrm{Eq}(f),\rho_1,\rho_2)$ is a kernel pair of $f$ in $\mathbf{Mon},$ the monoid $R_{\mathbb{E}}$ is a subobject $\xymatrixcolsep{1pc}\xymatrix{{j\colon R_{\mathbb{E}}} \ar@{>->}[r] &{\mathrm{Eq}(f).}}$ Then the Schreier extension \eqref{eqn:R-hat}, which responds to the idea of looking at the monoid $R_{\mathbb{E}}$ as an object of $cc\text{-}SExt_M,$ allows to expand $j$ into a monomorphism
\begin{equation*}
\begin{aligned}
{
\xymatrixcolsep{0.1pc}
\xymatrix{
{\widehat{R}_{\mathbb{E}}} &{:}\\
{\mathbb{E}\times\mathbb{E}} &{:}
}
}
{
\xymatrixcolsep{3pc}
\xymatrix{
{K\times K} \ar@{>->}[r]^-{\widehat{k}} \ar@{>->}[d]_-{\la1_K,+\ra} &{R_{\mathbb{E}}} \ar@{>->}[d]^-j \ar@{->>}[r]^-{\widehat{f}} &M \ar@{=}[d] \\
{K\times K} \ar@{>->}[r]_-{k\times_Mk} &{\mathrm{Eq}(f)} \ar@{->>}[r]_{f\rho_1=f\rho_2} &{M}
}
}
\end{aligned}
\end{equation*}
in $cc\text{-}SExt_M,$ which realises $\widehat{R}_{\mathbb{E}}$ as a subobject of $\mathbb{E}\times\mathbb{E}.$ (Observe that $\la1_K,+\ra,$ which is indeed an injective monoid homomorphism when $K$ is commutative and cancellative, is surjective if and only if $K$ is a group: thus $\widehat{R}_{\mathbb{E}}=\mathbb{E}\times\mathbb{E}$ if and only if $K$ is a group, if and only if $\rel{E}$ is symmetric.)

Now, a subobject of an object $(A,\eta)$ in $can\text{-}\mathcal{S}\mathrm{mod}_M$ is a submonoid $B\subseteq A$ which is closed with respect to the action $\eta,$ together with the restricted action: thus, since the direction functor \eqref{eqn:d_action_guise} preserves monomorphisms and the kernels of $\widehat{f}$ and $f\rho_1=f\rho_2$ are the same, it follows that $d(\widehat{R}_{\mathbb{E}})=d(\mathbb{E}\times\mathbb{E}).$ Since moreover $d$ preserves finite products, we see that $d(\widehat{R}_{\mathbb{E}})=d(\mathbb{E}\times\mathbb{E})\cong d(\mathbb{E})\times d(\mathbb{E}),$ and we conclude that the $M$-action on $K\times K$ determined by \eqref{eqn:R-hat} is component-wise the action induced by $\mathbb{E}.$
\end{enumerate}
\end{remarks}
Then, if $\xymatrix{{\mathbb{E}_0:K_0} \ar@{>->}[r]^-{k_0} &{X_0} \ar@{->>}[r]^-{f_0} &{M}}$ and $\xymatrix{{\mathbb{E}_1:K_1} \ar@{>->}[r]^-{k_1} &{X_1} \ar@{->>}[r]^-{f_1} &{M}}$ are $cc$-Schreier extensions on $M$ and $\xymatrix{
{\mathcal{E}:{{\mathbb{E}}_1}\times_{{\mathbb{E}}_0}{{\mathbb{E}}_1}} \ar[r]^-{m_X} &{{\mathbb{E}}_1} \ar@<.9ex>[r]^-{d_X} \ar@<-.9ex>[r]_-{c_X} &{{\mathbb{E}}_0} \ar[l]|-{e_X}
}$ is a Schreier internal category in $cc\text{-}SExt_M,$ since by assumption $d_X$ and $c_X$ are morphisms of Schreier extensions, it follows that the induced morphism $\la d_X,c_X\ra\colon X_1\longrightarrow R_{{\mathbb{E}}_0}$ can be regarded as a morphism
\begin{equation}
\label{eqn:aspherical_in_cc-SExt_M}
\begin{aligned}
{
\xymatrixcolsep{0.1pc}
\xymatrix{
{{\mathbb{E}}_1} \ar[d]_-{\la d_X,c_X \ra} &{:}\\
{\widehat{R}_{{\mathbb{E}}_0}} &{:}
}
}
{
\xymatrixcolsep{3pc}
\xymatrix{
{K_1} \ar[d]_-{\la d^\star_X,h\ra} \ar@{>->}[r]^-{k_1} &{X_1} \ar[d]_-{\la d_X,c_X\ra} \ar@{->>}[r]^-{f_1} &M \ar@{=}[d] \\
{K_0\times K_0} \ar@{>->}[r]_-{\widehat{k_0}} &{R_{{\mathbb{E}}_0}} \ar@{->>}[r]_-{\widehat{f_0}} &{M}
}
}
\end{aligned}
\end{equation}
in $cc\text{-}SExt_M.$ In this diagram, as in Section 3, $d^\star_X\colon K_1\longrightarrow K_0$ denotes the restriction of $d_X\colon X_1\longrightarrow X_0$ to the kernels, and $h\colon K_1\longrightarrow K_0$ denotes the monoid homomorphism introduced in Remark \ref{rmk:h}, realising the equality $c_X\big(k_1(a)\big)=k_0\big(h(a)\big)+d_X\big(k_1(a)\big)$ for every $a\in K_1.$

Next, observe that, for every $\mathbb{E}\in cc\text{-}SExt_M,$ the relation $\rel{E}$ yields morphisms of Schreier extensions
\begin{equation*}
\begin{aligned}
{
\xymatrixcolsep{0.1pc}
\xymatrix{
{\widehat{R}_{{\mathbb{E}}}} \ar@<.99ex>[d]^-{r_2} \ar@<-.99ex>[d]_-{r_1} &{:} \\
{\mathbb{E}} \ar[u]|-{s_0} &{:}
}
}
{
\xymatrixcolsep{3pc}
\xymatrix{
{K\times K} \ar@<.99ex>[d]^-{+} \ar@<-.99ex>[d]_-{p_1} \ar@{>->}[r]^-{\widehat{k}} &{R_{{\mathbb{E}}}}  \ar@<.99ex>[d]^-{r_2} \ar@<-.99ex>[d]_-{r_1} \ar@{->>}[r]^-{\widehat{f}} &{M} \ar@{=}[d] \\
{K} \ar[u]|-{t_0} \ar@{>->}[r]_-k &{X}\ar[u]|-{s_0}  \ar@{->>}[r]_-{f} &{M,}
}
}
\end{aligned}
\end{equation*}
with $p_1=r^\star_1$ the projection on the first factor and $t_0=s^\star_0=\la 1_K,0\ra,$ and since $\rel{E}$ is a reflexive and transitive relation and $(r_1,s_0)$ is a Schreier point (by \cite[Proposition 5.5]{AMR}) we conclude that
\begin{equation}
\label{eqn:R-eq}
\xymatrix{{\mathcal{R}(\mathbb{E}):\widehat{R}_{{\mathbb{E}}}} \ar@<.99ex>[r]^-{r_1} \ar@<-.99ex>[r]_-{r_2} &{\mathbb{E}} \ar[l]|-{s_0}}
\end{equation}
is a Schreier internal category in $cc\text{-}SExt_M;$ furthermore, it is trivially $\mathcal{R}$-full, since the induced morphism $\la r_1,r_2\ra$ is just the identity on $R_{\mathbb{E}}.$

This gives us a functor
\begin{equation}
\label{eqn:R}
\mathcal{R}\colon cc\text{-}SExt_M\longrightarrow\mathcal{R}\mathrm{SCat}(cc\text{-}SExt_M), \ \mathbb{E} \mapsto \mathcal{R}(\mathbb{E}),
\end{equation}
defined on the morphisms by sending
\begin{equation*}
\xymatrix{
{\mathbb{E}:K} \ar@<1.7ex>[d]_-{\varphi^\star} \ar@{>->}[r]^-k &X \ar[d]^-{\varphi} \ar@{->>}[r]^-f &M \ar@{=}[d]\\
{\mathbb{E}^\prime:K^\prime} \ar@{>->}[r]_-{k^\prime} &{X^\prime} \ar@{->>}[r]_-{f^\prime} &{M}
}
\end{equation*}
to
\begin{equation*}
\xymatrix{
{R_{\mathbb{E}}}\ar@<.9ex>[r]^-{r_1} \ar@<-.9ex>[r]_-{r_2} \ar[d]_-{R(\varphi)} &{X} \ar[d]^-{\varphi} \ar[l]|-{s_0} \\
{R_{\mathbb{E}'}}\ar@<.9ex>[r]^-{{r_1}^\prime} \ar@<-.9ex>[r]_-{{r_2}^\prime} &{X^\prime,} \ar[l]|-{{s_0}^\prime}
}
\end{equation*}
where $R(\varphi)=\varphi\times\varphi\colon R_{\mathbb{E}}\longrightarrow R_{\mathbb{E}'},$ $\big(x,k(a)+x\big)\longmapsto\big(\varphi(x),k'\varphi^\star(a)+\varphi(x)\big).$ Similarly, we define a functor
\[\mathcal{R}_S\colon cc\text{-}SExt_M\longrightarrow\mathrm{SCat}(cc\text{-}SExt_M), \ \mathbb{E} \mapsto \mathcal{R}_S(\mathbb{E})=\mathcal{R}(\mathbb{E}),
\]
whose codomain is the category of (non necessarily $\mathcal{R}$-full) Schreier internal categories in $cc\text{-}SExt_M,$ obtained by composition of \eqref{eqn:R} with the forgetful functor
\begin{equation*}
U\colon\mathcal{R}\mathrm{SCat}(cc\text{-}SExt_M)\longrightarrow\mathrm{SCat}(cc\text{-}SExt_M)
\end{equation*}
(which acts as the identity but ``forgets'' $\mathcal{R}$-fullness).

Moreover, by denoting with
\begin{equation}
\label{eqn:O_R}
\mathcal{O}\colon\mathcal{R}\mathrm{SCat}(cc\text{-}SExt_M)\longrightarrow cc\text{-}SExt_M
\end{equation}
and
\begin{equation}
\label{eqn:O_S}
\mathcal{O}_S\colon\mathrm{SCat}(cc\text{-}SExt_M)\longrightarrow cc\text{-}SExt_M
\end{equation}
the ``object of objects'' functors defined by
\begin{equation*}
\xymatrix{ {(\mathcal{E}:{\mathbb{E}}_1} \ar@<.9ex>[r]^-{d_X} \ar@<-.9ex>[r]_-{c_X} &{{\mathbb{E}}_0)} \ar[l]|-{e_X}
}\longmapsto {\mathbb{E}}_0,
\end{equation*}
the fact that \eqref{eqn:aspherical_in_cc-SExt_M} is a morphism of Schreier extensions shows that for every ${\mathbb{E}}_0\in cc\text{-}SExt_M$ the internal category $\mathcal{R}({\mathbb{E}}_0),$ defined as in \eqref{eqn:R-eq}, is a terminal object both in the fibre $\mathcal{O}_S^{-1}({\mathbb{E}}_0)$ of the functor \eqref{eqn:O_S} and in the fibre $\mathcal{O}^{-1}({\mathbb{E}}_0)$ of the functor \eqref{eqn:O_R}: indeed, for any $\xymatrix{ {\mathcal{E}:{\mathbb{E}}_1} \ar@<.9ex>[r]^-{d_X} \ar@<-.9ex>[r]_-{c_X} &{{\mathbb{E}}_0} \ar[l]|-{e_X}
}$ in $(\mathcal{R})\mathrm{SCat}(cc\text{-}SExt_M),$ \eqref{eqn:aspherical_in_cc-SExt_M} provides exactly the terminal morphism
\begin{equation*}
\xymatrixrowsep{2.5pc}
\xymatrix{
{{\mathbb{E}}_1} \ar@<.9ex>[rd]^-{d_X} \ar@<-.9ex>[rd]_-{c_X} \ar[rr]^-{\la d_X,c_X\ra} &\ &{\widehat{R}_{{\mathbb{E}}_0}} \ar@<.9ex>[dl]^-{r_2} \ar@<-.9ex>[dl]_-{r_1} \\
&{{\mathbb{E}}_0.} \ar[lu]|-{e_X} \ar[ru]|-{s_0} &\
}
\end{equation*}
We can derive two consequences from these facts.

First, it is well known that if $\mathcal{C}$ is a finitely complete category then the \emph{object of objects} functor
\begin{equation}
\label{eqn:O_general}
\mathcal{O}\colon\mathrm{Cat}(\mathcal{C})\longrightarrow\mathcal{C}, \ \xymatrix{{\big(X_1} \ar@<.9ex>[r]^-{d_X} \ar@<-.9ex>[r]_-{c_X} &{X_0\big)} \ar[l]|-{e_X} }\longmapsto X_0,
\end{equation}
admits a right adjoint $\nabla$ defined by sending an object $X\in\mathcal{C}$ to the internal category $\xymatrix{{X\times X} \ar@<.9ex>[r]^-{p_1} \ar@<-.9ex>[r]_-{p_2} &X \ar[l]|-{\Delta_X}}$ given by the total relation on $X.$

In the case of $\mathcal{C}=cc\text{-}SExt_M,$ which is not finitely complete because it does not have all pullbacks (cf. \cite[Appendix]{AMR}), it is still true that every object $\mathbb{E}$ has a binary product $\mathbb{E}\times\mathbb{E},$ but the resulting internal category $\nabla(\mathbb{E})$ is not a Schreier internal category, in general, so that the above adjunction is lost when $\mathcal{O}$ is considered on $\mathrm{SCat}(\mathcal{C}).$ The property is restored by replacing $\nabla$ with $\mathcal{R}$:
\begin{proposition}
There is a commutative diagram of categories and functors, forming adjunctions
\begin{equation*}
\begin{aligned}
\xymatrix{
&{\mathrm{SCat}(cc\text{-}SExt_M)} \ar@<-.99ex>[d]_-{\mathcal{O}_S} \ar@{}[d]|-{\dashv} \\
{\mathcal{R}\mathrm{SCat}(cc\text{-}SExt_M)} \ar@/^1.5pc/[ru]^-{U} \ar@<.99ex>[r]^-{\mathcal{O}}  &{cc\text{-}SExt_M.} \ar@<.99ex>[l]^-{\mathcal{R}} \ar@<-.99ex>[u]_-{\mathcal{R}_S}   \ar@{}[l]|-{\bot}
}
\end{aligned}
\end{equation*}
\end{proposition}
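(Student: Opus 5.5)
Two things need proving: commutativity of the diagram, i.e.\ $U\mathcal{R}=\mathcal{R}_S$ and $\mathcal{O}_SU=\mathcal{O}$, and the two adjunctions $\mathcal{O}\dashv\mathcal{R}$ and $\mathcal{O}_S\dashv\mathcal{R}_S$. Commutativity is immediate from the definitions: $\mathcal{R}_S$ was defined as $U\mathcal{R}$, and $U$ is the identity on underlying internal categories, so it does not change the object of objects. For the adjunctions, the plan is to exhibit unit and counit and check the triangle identities, reducing everything to the terminal-object property of $\mathcal{R}(\mathbb{E}_0)$ in the fibres already observed.

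The counit $\varepsilon\colon\mathcal{O}\mathcal{R}\Rightarrow 1$ is the identity, since $\mathcal{O}\mathcal{R}(\mathbb{E})=\mathbb{E}$ by construction. The unit $\eta_{\mathcal{E}}\colon\mathcal{E}\to\mathcal{R}\mathcal{O}(\mathcal{E})=\mathcal{R}(\mathbb{E}_0)$ is the internal functor $(\la d_X,c_X\ra,1_{X_0})$ given by diagram \eqref{eqn:aspherical_in_cc-SExt_M}. First I will check that it is a morphism in the relevant category.
\begin{itemize}
\item It is an internal functor, because $r_1\la d_X,c_X\ra=d_X$, $r_2\la d_X,c_X\ra=c_X$, $\la d_X,c_X\ra e_X=s_0$, and $\mathcal{R}(\mathbb{E}_0)$ is a preorder, so compatibility with composition follows from the target being a relation.
\item Its components are morphisms of Schreier extensions, as noted for \eqref{eqn:aspherical_in_cc-SExt_M}.
\item In the $\mathcal{R}$-full case, the extra condition \eqref{eqn:condition_on_R} must hold. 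Its lower row is the identity of $R_{\mathbb{E}_0}$, whose kernel is $0$. So the square is trivially a morphism of Schreier extensions: representatives map to $0$-decompositions, and every element of the identity extension is a representative.
\end{itemize}

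Naturality of $\eta$ in $\mathcal{E}$ follows from the equalities $(h_0\times h_0)\la d_X,c_X\ra=\la d_Y,c_Y\ra h_1$, which is just the functor property $d_Yh_1=h_0d_X$, $c_Yh_1=h_0c_X$.

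For the triangle identities:
\begin{itemize}
\item $\varepsilon_{\mathcal{O}\mathcal{E}}\circ\mathcal{O}(\eta_{\mathcal{E}})=1_{X_0}$, since the object component of $\eta$ is the identity.
\item $\mathcal{R}(\varepsilon_{\mathbb{E}})\circ\eta_{\mathcal{R}\mathbb{E}}=1$, since $\eta_{\mathcal{R}(\mathbb{E})}=(\la r_1,r_2\ra,1)$ is the identity.
\end{itemize}
Alternatively, I can phrase the argument as a bijection: a morphism $\mathcal{E}\to\mathcal{R}(\mathbb{E}')$ is uniquely determined by its object component $h_0\colon\mathbb{E}_0\to\mathbb{E}'$, with arrow component forced to be $(h_0\times h_0)\la d_X,c_X\ra$ because $R_{\mathbb{E}'}\rightarrowtail X'\times X'$ is monic.

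The main obstacle is checking that, for an arbitrary $h_0$ in $cc\text{-}SExt_M$, this forced arrow component is an admissible morphism. Two things need checking.
\begin{itemize}
\item It must land in $R_{\mathbb{E}'}$, which holds because $(x,k(a)+x)\mapsto(h_0(x),k'h_0^\star(a)+h_0(x))$.
\item It must be a morphism of Schreier extensions $\mathbb{E}_1\to\widehat{R}_{\mathbb{E}'}$. I will verify this by noting that, for a representative $u$ of $\mathbb{E}_1$, $d_X(u)$ and $c_X(u)$ are representatives of $\mathbb{E}_0$; hence $h_0d_X(u)$ and $h_0c_X(u)$ are representatives of $\mathbb{E}'$, and the pair is a representative of $\widehat{R}_{\mathbb{E}'}$ by Remarks~\ref{rmks:R-hat}(2).
\end{itemize}
For the $\mathcal{R}$-full version one also needs the condition \eqref{eqn:condition_on_R} with target $\la r_1,r_2\ra=1$, which is trivial as above. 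Once both points are settled, the same argument applies verbatim to $\mathcal{O}_S\dashv\mathcal{R}_S$, where the extra condition is simply absent.
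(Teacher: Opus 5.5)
Your proof is correct and takes essentially the same route as the paper. The paper defines the adjunction bijection by $\varphi\mapsto R(\varphi)\circ !_{\mathbb{E}_0}$, using the terminal morphism of the fibre as the unit; its arrow component is exactly your $(h_0\times h_0)\la d_X,c_X\ra$, and you simply spell out the checks the paper leaves implicit (admissibility, uniqueness via joint monicity of $r_1,r_2$, and triviality of condition \eqref{eqn:condition_on_R} when the target is $\mathcal{R}(\mathbb{L}_0)$).
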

\begin{proof}
Both adjunctions are defined by sending a morphism $\varphi\colon {\mathbb{E}}_0\longrightarrow {\mathbb{L}}_0,$ whose domain is the object of objects of $\xymatrix{ {\mathcal{E}:{\mathbb{E}}_1} \ar@<.9ex>[r]^-{d_X} \ar@<-.9ex>[r]_-{c_X} &{{\mathbb{E}}_0,} \ar[l]|-{e_X}
}$ to the morphism
\begin{equation*}
\xymatrix{ {\mathcal{E}} \ar[r]^-{!_{{\mathbb{E}}_0}} &{\mathcal{R}({\mathbb{E}}_0)} \ar[r]^-{R(\varphi)} &{\mathcal{R}({\mathbb{L}}_0)} }
\end{equation*}
where $!_{{\mathbb{E}}_0}$ is the terminal morphism in the appropriate fibre $\mathcal{O}_S^{-1}({\mathbb{E}}_0)$ or $\mathcal{O}^{-1}({\mathbb{E}}_0).$
\end{proof}
The second aspect is that we are now in the position to examine more closely the difference between the notion of $\mathcal{R}$-fullness and the one of (Schreier) aspherical internal category in $cc\text{-}SExt_M,$ as it would come from a direct translation from \cite{aspherical}.

Recall indeed (as we have already mentioned in Remark \ref{rmk:aspherical_groupoids} for the case of groupoids), that the fact that an internal category $\xymatrix{{\underline{X}:X_1} \ar@<.9ex>[r]^-{d_X} \ar@<-.9ex>[r]_-{c_X} &{X_0} \ar[l]|-{e_X}}$ (in a category $\mathcal{C}$ with finite products) is \emph{connected}, i.e. that $\la d_X,c_X\ra\colon X_1\longrightarrow X_0\times X_0$ is a regular epimorphism, can be expressed by saying that $\underline{X}$ has global support in the fibre $\mathcal{O}^{-1}(X_0)$ of the functor \eqref{eqn:O_general}, i.e. by looking at $\la d_X,c_X\ra$ as a morphism from $\underline{X}$ into the terminal object $\xymatrix{{\nabla{X_0}:X_0\times X_0} \ar@<.9ex>[r]^-{p_1} \ar@<-.9ex>[r]_-{p_2} &{X_0} \ar[l]|-{\Delta_{X_0}}}$ of $\mathcal{O}^{-1}(X_0).$

In the case of Schreier internal categories in $\mathcal{C}= cc\text{-}SExt_M,$ it is easy to see that if in \eqref{eqn:aspherical_in_cc-SExt_M} the monoid homomorphism $\la d_X,c_X\ra\colon X_1\longrightarrow R_{{\mathbb{E}}_0}$ is surjective, then $\la d^\star_X,h\ra\colon K_1\rightarrow K_0\times K_0$ is also surjective (the left square in \eqref{eqn:aspherical_in_cc-SExt_M} being a pullback), and it follows by \cite[Proposition A.4]{AMR} that in this situation the morphism $\la d_X,c_X\ra\colon{\mathbb{E}}_1\rightarrow\widehat{R}_{{\mathbb{E}}_0}$ is a regular epimorphism in $cc\text{-}SExt_M.$ We maintain that this is the ``correct'' notion of \emph{connectedness} for an object in $\mathrm{SCat}(cc\text{-}SExt_M),$ since it follows by Proposition \ref{prop:d_X,c_X-inside_R} that $\la d_X,c_X\ra$ can never be a regular epimorphism as a morphism with codomain ${\mathbb{E}}_0\times{\mathbb{E}}_0,$ unless $K_0$ is a group (in which case ${\mathbb{E}}_0\times{\mathbb{E}}_0=\widehat{R}_{{\mathbb{E}}_0}$). Thus, if $\mathcal{E}$ is $\mathcal{R}$-full (so that $\la d_X,c_X\ra\colon X_1\longrightarrow R_{{\mathbb{E}}_0}$ is, moreover, a Schreier extension), $\mathcal{E}$ is also connected, in this sense.

But further, back to the case of a general $\mathcal{C},$ recall that an internal category $\underline{X}$ is \emph{aspherical}, in the sense of \cite{aspherical}, if it is connected and its object of objects $X_0$ has global support in $\mathcal{C}$: for $\mathcal{C}= cc\text{-}SExt_M,$ this condition is trivially satisfied by any (Schreier) internal category $\mathcal{E},$ since for every ${\mathbb{E}}_0$ the terminal morphism
\begin{equation*}
\xymatrix{
{{\mathbb{E}}_0:K_0} \ar@<1.7ex>@{->>}[d] \ar@{>->}[r]^-{k_0} &{X_0} \ar@{->>}[d]^-{f_0} \ar@{->>}[r]^-{f_0} &M \ar@{=}[d]\\
{{\mathds{1}}_M:0 \hspace{0.3cm} } \ar@{>->}[r] &{M} \ar@{=}[r] &{M}
}
\end{equation*}
is clearly a regular epimorphism.

Thus, an $\mathcal{R}$-full Schreier internal category $\mathcal{E}$ is also aspherical, if we transplant in the above sense the notion of \cite{aspherical} to $cc\text{-}SExt_M,$ but the converse is false, because asking for $\la d_X,c_X\ra\colon X_1\longrightarrow R_{{\mathbb{E}}_0}$ to be a Schreier extension is strictly stronger than asking for it to be a regular epimorphism. Of course, the difference disappears when $M=G$ is a group and we look at $AbExt_G\subseteq cc\text{-}SExt_G.$

Finally, as we have seen, we do need the stronger notion of $\mathcal{R}$-fullness to describe crossed Schreier extensions in terms of internal structures in $cc\text{-}SExt_M$ (Theorem \ref{thm:equivalence_RSCat-XSExt}), which motivates the introduction of Definition \ref{def:R-full}.

Coming back to the main goal of this section, the idea is that the morphism \eqref{eqn:aspherical_in_cc-SExt_M} plays the role which in the general definition of \eqref{eqn:1-direction-bourn}, for a naturally Mal'tsev category $\mathcal{C},$ is played by the regular epimorphism $\xymatrixcolsep{1.5pc}\xymatrix{{\la d_Y,c_Y\ra\colon Y_1}  \ar@{->>}[r] &{Y_0\times Y_0}}$ associated with an aspherical (abelian) groupoid $\xymatrix{{\underline{Y}: Y_1} \ar@<.9ex>[r]^-{d_Y} \ar@<-.9ex>[r]_-{c_Y} &{Y_0.} \ar[l]|-{e_Y}}$

Accordingly, we want to compute the kernel in $\mathbf{CMon}(cc\text{-}SExt_M)\cong cc\text{-}SPt_M\cong can\text{-}\mathcal{S}\mathrm{mod}_M$ of the morphism resulting by the application of the direction functor \eqref{eqn:direction_functor} to \eqref{eqn:aspherical_in_cc-SExt_M}. This can be achieved directly, using the properties of $d.$ Indeed, the desired kernel $\mathbb{P}$ is given by a pullback
\begin{equation}
\label{eqn:ker(d_X,c_X)}
\begin{aligned}
\xymatrix{
{\mathbb{P}} \pullback\ar[r] \ar[d] &{d({\mathbb{E}}_1)} \ar[d]^-{d(\la d_X,c_X \ra)} \\
{{\mathds{1}}_M} \ar@{>->}[r]_{\widetilde{s_0}} &{d(\widehat{R}_{{\mathbb{E}}_0})}
}
\end{aligned}
\end{equation}
in $\mathbf{CMon}(cc\text{-}SExt_M),$ where $\xymatrix{{{\mathds{1}}_M:0} \ar[r] &M \ar@{=}[r] &{M}}$ and $\widetilde{s_0}$ is the unit of the internal monoid $d(\widehat{R}_{{\mathbb{E}}_0})\in\mathbf{CMon}(cc\text{-}SExt_M)$ (cf.~\cite[Theorem 6.5]{AMR}). In particular, $\mathbb{P}$ is then a subobject of $d({\mathbb{E}}_1)=(K_1,\eta_1)\in can\text{-}\mathcal{S}\mathrm{mod}_M,$ meaning that it is determined by the restriction of the action $\eta_1\colon M\rightarrow\mathrm{End}(K_1)$ induced by $\mathbb{E}_1$ to a submonoid $A$ of $K_1$ which is closed under $\eta_1.$ According to \eqref{eqn:ker(d_X,c_X)}, $A$ is given by the pullback
\begin{equation*}
\xymatrix{
A \pullback\ar@{>->}[r] \ar[d] &{K_1} \ar[d]^-{\la d^\star_X,h\ra} \\
{0} \ar[r] &{K_0\times K_0}
}
\end{equation*}
in $\mathbf{Mon},$ and is thus a kernel of $\la d^\star_X,h\ra$: but this is the same as a kernel of $\la d_X,c_X\ra\colon X_1\longrightarrow R_{{\mathbb{E}}_0}$ (because in diagram \eqref{eqn:aspherical_in_cc-SExt_M} the left hand square is a pullback), which is in turn the same as a kernel of the crossed semimodule $\delta=c_Xk_d\colon K[d_X]\longrightarrow X_0$ corresponding by Theorem \ref{thm:equivalence_crossed_semimodules_schreier_categories} to the Schreier internal category $\xymatrix{{X_1} \ar@<.9ex>[r]^-{d_X} \ar@<-.9ex>[r]_-{c_X} &{X_0.} \ar[l]|-{e_X} }$ (Here, as always, $(K[d_X],k_d)$ denotes a kernel of $d_X.$)

Our next step is to show that the above $M$-semimodule structure on $A,$ resulting as the kernel of $d(\la d_X,c_X \ra)\colon d({\mathbb{E}}_1)\longrightarrow d(\widehat{R}_{{\mathbb{E}}_0})$ in $\mathbf{CMon}(cc\text{-}SExt_M),$ coincides with the one induced by the crossed Schreier extension
\begin{equation}
\label{eqn:crossed_ext_2}
\begin{aligned}
\xymatrixrowsep{1.5pc}
\xymatrix{
{A} \ar@{>->}[r]^-{k} &{K[d_X]} \ar[rr]^-{\delta=c_Xk_d} \ar@{->>}[rd]_-g &\ &{X_0} \ar@{->>}[r]^-{f_0} &{M} \\
&\ &{K_0} \ar@{>->}[ru]_-{k_0} &\ &\
}
\end{aligned}
\end{equation}
corresponding to $\mathcal{E}$ in the equivalence of Theorem \ref{thm:equivalence_RSCat-XSExt}.
\begin{proposition}
\label{prop:same_action}
The $M$-action on $A$ induced by \eqref{eqn:crossed_ext_2} is the restriction to $A$ of the $M$-action on $K_1$ induced by the Schreier extension ${\mathbb{E}}_1.$
\end{proposition}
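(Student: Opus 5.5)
We unwind both actions on an element $a\in A$ and compare them directly. Recall from Section~\ref{sec:crossed_semimodules} (via \cite{AMMPS}) how the action induced by \eqref{eqn:crossed_ext_2} is defined. Given $m\in M$, choose $x\in X_0$ with $f_0(x)=m$; then $m\cdot a=x\ast a$. Here $\ast$ is the action of $X_0$ on $K[d_X]$ associated with the Schreier point $(d_X,e_X)$. By Theorem~\ref{thm:equivalence_crossed_semimodules_schreier_categories} it is characterised by
\begin{equation*}
e_X(x)+k_dk(a)=k_dk(x\ast a)+e_X(x).
\end{equation*}
Since $\mathbf{CS}_r$ holds, $x\ast a\in A$, and since $\mathbf{CS}_l$ also holds, the result does not depend on the choice of $x$ in $f_0^{-1}(m)$. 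On the other side, the action $\eta_1$ of $M$ on $K_1$ induced by $\mathbb{E}_1$ is given by \eqref{eqn:action_induced_by_Schreier_ext}: for any representative $u_m$ of $m$ in $\mathbb{E}_1$ we have $u_m+k_1(b)=k_1(m\ast_1 b)+u_m$, and this is independent of the chosen representative.

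The key step is a good choice of $x$ and $u_m$. Take a representative $v_m\in X_0$ of $m$ for $\mathbb{E}_0$. Since $e_X$ is a morphism of Schreier extensions, it preserves representatives, so $u_m:=e_X(v_m)$ is a representative of $m$ in $\mathbb{E}_1$. Now identify $A$ with a submonoid of $K_1$ through the composite $K[d_X]\rightarrowtail K_1$. This composite exists because $f_1k_d=f_0d_Xk_d=0$, and it satisfies $k_1\circ(K[d_X]\rightarrowtail K_1)=k_d$. With $x=v_m$, the defining equation of $\ast$ reads
\begin{equation*}
u_m+k_1(a)=k_1(v_m\ast a)+u_m,
\end{equation*}
with $v_m\ast a\in A\subseteq K_1$. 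Cancellativity of $K_1$ gives uniqueness of the element $b$ with $u_m+k_1(a)=k_1(b)+u_m$, because $u_m$ is a representative. Hence $m\ast_1 a=v_m\ast a=m\cdot a$, which is exactly the claim. As a byproduct, this argument also recovers that $A$ is closed under $\eta_1$, which was already noted after \eqref{eqn:ker(d_X,c_X)}.

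The main obstacle is bookkeeping rather than substance. First, one has to make sure that the identifications of $A$ inside $K[d_X]$ and inside $K_1$ are compatible; this uses the kernel inclusions and the fact that the left square of \eqref{eqn:aspherical_in_cc-SExt_M} is a pullback. Second, one has to check that the induced action of \eqref{eqn:crossed_ext_2} may legitimately be computed using an arbitrary preimage $x$ of $m$ in $X_0$, in particular a representative. That is exactly the well-definedness result of \cite{AMMPS} recalled in Section~\ref{sec:crossed_semimodules}, so we may invoke it directly.
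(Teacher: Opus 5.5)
Your proposal is correct and follows the paper's proof. Both rest on the fact that $e_X$ preserves representatives, so the defining equation of the action of the Schreier point $(d_X,e_X)$ becomes the defining equation of $\eta_1$. The only difference is that you evaluate directly at a representative $v_m$ by invoking the well-definedness result of \cite{AMMPS}. The paper instead takes an arbitrary preimage $x_0=k_0(a_0)+v_m$ and cancels the extra term $e^\star_X(a_0)$ using commutativity and cancellativity of $K_1$, which makes its argument independent of that well-definedness.
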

\begin{proof}
There is no loss in generality in realising $A$ and $K[d_X]$ as submonoids of $K_1,$ so that $A\subseteq K[d_X]$ and we can write $k_1(a)$ for elements $a\in K[d_X].$

The action $\eta_1\colon M\times K_1\dashrightarrow K_1,$ $(m,a_1)\mapsto\eta_1(m)(a_1),$ induced by ${\mathbb{E}}_1$ is defined by the condition $u_m+k_1(a_1)=k_1\big(\eta_1(m)(a_1)\big)+u_m,$ where $u_m$ is any representative of $m$ in ${\mathbb{E}}_1.$

At the same time, the $M$-action on $A$ induced by the crossed Schreier extension \eqref{eqn:crossed_ext_2} is obtained as follows (cf. Section 2):
\begin{itemize}
\item let $\alpha$ denote the given action of $X_0$ on $K[d_X]$ in the crossed semimodule $\delta;$ then, since ${\mathbb{E}}_0$ is a $cc$-Schreier extension, $\alpha$ restricts to an action (which we shall still call $\alpha$) of $X_0$ to $A;$
\item next, being $\xymatrixcolsep{1.5pc}\xymatrix{A \ar@{>->}[r]^-k &{K[d_X]} \ar@{->>}[r]^-g &{K_0}}$ also a $cc$-Schreier extension, there results an action $\beta\colon M\times A\dashrightarrow A,$ $\big(m=f_0(x_0),a\big)\mapsto \beta(m)(a)=\alpha(x_0)(a).$
\end{itemize}
Then, since we know from the proof of Theorem \ref{thm:equivalence_crossed_semimodules_schreier_categories} that the $X_0$-action on $K[d_X]$ is the one corresponding to the Schreier point $(d_X,e_X),$ we see that the element $\beta(m)(a)=\alpha(x_0)(a)$ is defined by the property
\begin{equation*}
e_X(x_0)+k_1(a)=k_1\big(\beta(m)(a)\big)+e_X(x_0).
\end{equation*}
Now, consider the $cc$-Schreier extension ${\mathbb{E}}_0,$ and write $x_0=k_0(a_0)+v_m,$ for some representative $v_m$ of $m=f_0(x_0)=f_1e_X(x_0)$: then the above equation can be written as
\begin{equation*}
e_Xk_0(a_0)+e_X(v_m)+k_1(a)=k_1\big(\beta(m)(a)\big)+e_Xk_0(a_0)+e_X(v_m),
\end{equation*}
i.e.
\begin{equation}
\label{eqn:act_2}
k_1e^\star_X(a_0)+e_X(v_m)+k_1(a)=k_1\big(\beta(m)(a)+e^\star_X(a_0)\big)+e_X(v_m).
\end{equation}
Since $e_X\colon{\mathbb{E}}_0\longrightarrow{\mathbb{E}}_1$ is a morphism of Schreier extensions, $e_X(v_m)$ is a representative of $m$ in ${\mathbb{E}}_1,$ so that
\begin{equation*}
	e_X(v_m)+k_1(a)=k_1\big(\eta_1(m)(a)\big)+e_X(v_m).
\end{equation*}
Thus, \eqref{eqn:act_2} gives
\begin{equation*}
k_1\big(e^\star_X(a_0)+\eta_1(m)(a)\big)+e_X(v_m)=k_1\big(\beta(m)(a)+e^\star_X(a_0)\big)+e_X(v_m),
\end{equation*}
and since $e_X(v_m)$ is a representative and $K_1$ is commutative and cancellative, we conclude that $\beta(m)(a)=\eta_1(m)(a).$
\end{proof}
Eventually, using a terminology akin to that of \cite{aspherical, Bourn-Rodelo}, we can define:
\begin{definition}
\label{def:1-direction}
The cancellative $M$-semimodule $(A,\eta_1)$ is the \emph{direction} of the $\mathcal{R}$-full Schreier internal category $\mathcal{E}.$ It will be denoted by $(A,\eta_1)=d_1(\mathcal{E}).$
\end{definition}
By the above, if $\xymatrix{{\mathcal{E}:{{\mathbb{E}}_1}} \ar@<.9ex>[r]^-{d_X} \ar@<-.9ex>[r]_-{c_X} &{{\mathbb{E}}_0} \ar[l]|-{e_X}
}$ is an $\mathcal{R}$-full Schreier internal category in $cc\text{-}SExt_M,$ with object of arrows $\xymatrix{{\mathbb{E}_1:K_1} \ar@{>->}[r]^-{k_1} &{X_1} \ar@{->>}[r]^-{f_1} &{M}}$ and object of objects $\xymatrix{{\mathbb{E}_0:K_0} \ar@{>->}[r]^-{k_0} &{X_0} \ar@{->>}[r]^-{f_0} &{M,}}$ its direction $d_1(\mathcal{E})$ can be described equivalently as:
\begin{itemize}
\item the monoid $A\cong K[d_X]\cap K[c_X]\subseteq K_1$ resulting from the intersection of the kernels of the domain and codomain morphisms of $\xymatrix{{\underline{X}: X_1} \ar@<.9ex>[r]^-{d_X} \ar@<-.9ex>[r]_-{c_X} &{X_0,} \ar[l]|-{e_X}}$ together with the (restriction of the) $M$-action $\eta_1$ on $K_1$ induced by ${\mathbb{E}}_1;$
\item the internal commutative monoid $\xymatrixcolsep{2.5pc}\xymatrix{{A\rtimes_{\eta_1} M} \ar@<.5ex>[r]^-{p_M} &{M} \ar@<.5ex>[l]^-{\la 0,1_M\ra}}$ in $cc\text{-}SExt_M,$ where $A$ is a kernel of $\la d_X,c_X\ra\colon X_1\longrightarrow R_{{\mathbb{E}}_0}$ and $\eta_1$ is the (restriction of the) action induced by ${\mathbb{E}}_1;$
\item the kernel of the crossed Schreier extension \eqref{eqn:crossed_ext_2}, with the action described in Proposition~\ref{prop:same_action}, corresponding to $\mathcal{E}$ by Theorem \ref{thm:equivalence_RSCat-XSExt}.
\end{itemize}
\begin{remarks}
\label{rmk:d_1-is-d_1}
\begin{enumerate}
\item Let $\xymatrix{
{\mathcal{E}:{{\mathbb{E}}_1}} \ar@<.9ex>[r]^-{d_X} \ar@<-.9ex>[r]_-{c_X} &{{\mathbb{E}}_0} \ar[l]|-{e_X}
}$ be as above. By \eqref{eqn:aspherical_in_cc-SExt_M} and Remark \ref{rmks:R-hat}$(3),$ we have for the induced morphism $\la d_X,c_X\ra$ a commutative triangle
\begin{equation}
\label{eqn:fact_<d_X,c_X>}
\begin{aligned}
\xymatrixcolsep{1.3pc}
\xymatrixrowsep{1.5pc}
\xymatrix{
{{\mathbb{E}}_1} \ar[rr]^-{\la d_X,c_X\ra} \ar[rd]_-{\la d_X,c_X\ra} &\ &{{\mathbb{E}}_0\times{\mathbb{E}}_0} \\
&{\widehat{R}_{{\mathbb{E}}_0}.} \ar@{>->}[ru]_-{j_0} &\
}
\end{aligned}
\end{equation}
Since $d$ preserves finite products and monomorphisms, by applying $d$ to \eqref{eqn:fact_<d_X,c_X>} we get a factorisation
\begin{equation*}
\xymatrixcolsep{1.3pc}
\xymatrix{
{d({\mathbb{E}}_1)} \ar[rr]^-{\la d(d_X),d(c_X)\ra} \ar[rd]_-{d(\la d_X,c_X)\ra} &\ &{d({\mathbb{E}}_0)\times d({\mathbb{E}}_0)} \\
&{d(\widehat{R}_{{\mathbb{E}}_0})} \ar@{>->}[ru]_-{d(j_0)} &\
}
\end{equation*}
which shows that when $j_0$ is an isomorphism (i.e., $K_0$ is a group) and $\mathcal{E}$ is $\mathcal{R}$-full, then $d(\la d_X,c_X\ra)$ is precisely the regular epimorphism
\begin{equation*}
\xymatrixcolsep{1pc}\xymatrix{{\la d(d_X),d(c_X)\ra\colon d({\mathbb{E}}_1)} \ar@{->>}[r] &{d({\mathbb{E}}_0)\times d({\mathbb{E}}_0).}}
\end{equation*}
This fact, together with Remark~\ref{rmk:aspherical_groupoids}, shows that, when $M=G$ is a group, the direction $d_1(\mathcal{E})$ of an $\mathcal{R}$-full Schreier category in $AbExt_G\subseteq cc\text{-}SExt_G$ is the same as the direction of an aspherical groupoid in $\mathcal{C}=\mathrm{Mal}(\mathbf{Gp}/G),$ in the sense of \cite{aspherical, Bourn-Rodelo}.
\item We have seen that the actual computation of $d(\la d_X,c_X\ra)$ is not needed to construct $d_1(\mathcal{E}).$ Nonetheless, it is not difficult to prove that for every $\mathbb{E}\in cc\text{-}SExt_M$ one has $d\Big(\widehat{R}_{\mathbb{E}}\Big)\cong\widehat{R}_{d(\mathbb{E})}$ as Schreier points on $M,$ because both points correspond to the same $M$-action on $K\times K$ (which is component-wise the action induced by $\mathbb{E},$ cf. Remark~\ref{rmks:R-hat}$(3)$), so that one can compute $d(\la d_X,c_X\ra)\colon d({\mathbb{E}}_1)\longrightarrow d(\widehat{R}_{{\mathbb{E}}_0})$ explicitly as
\begin{equation*}
d(\la d_X,c_X\ra)=\la d(d_X),d(c_X) \ra\colon d({\mathbb{E}}_1)\longrightarrow \widehat{R}_{d({\mathbb{E}}_0)}.
\end{equation*}
\end{enumerate}
\end{remarks}

We now address the question of the functoriality of
\begin{equation}
\resizebox{0.9\textwidth}{!}
{\label{eqn:new_functor_d_1}
$d_1\colon\mathcal{R}\mathrm{SCat}(cc\text{-}SExt_M)\longrightarrow \mathbf{CMon}(cc\text{-}SExt_M)\cong cc\text{-}SPt_M\cong can\text{-}\mathcal{S}\mathrm{mod}_M,$}
\end{equation}
defined on the objects as in Definition \ref{def:1-direction}: to do that, we shall first look at the wider construction \eqref{eqn:action_monoids}.
\begin{proposition}
\label{prop:functor_D_1}
The process of associating to a crossed Schreier extension (in the general sense of Definition \ref{def:crossed_Schreier_extension})
\begin{equation*}
\xymatrix{
A \ar@{>->}[r]^-k &B \ar[r]^-{\delta} &C \ar@{->>}[r]^-{\pi} &M
}
\end{equation*}
its induced $M$-action on $A$ is functorial, yielding a functor
\begin{equation}
D_1\colon \mathrm{XSExt}_M\longrightarrow can\text{-}\mathcal{S}\mathrm{mod}_M.
\end{equation}
\end{proposition}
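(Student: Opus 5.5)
On objects, $D_1$ sends a crossed Schreier extension to $(A,\beta)$, where $\beta(m)(a)=\alpha(c)(a)$ for any $c$ with $\pi(c)=m$. By the results of \cite{AMMPS} recalled in Section~\ref{sec:crossed_semimodules}, $\beta$ is a well-defined monoid action of $M$ on $A$ (using $\mathbf{CS}_r$ and $\mathbf{CS}_l$). Moreover, $A$ is commutative and right cancellative; commutativity follows from the centrality of $k$, so $A$ is cancellative and $(A,\beta)\in can\text{-}\mathcal{S}\mathrm{mod}_M$. Thus the only real content is the definition and functoriality on arrows.

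On arrows, given a morphism of crossed Schreier extensions as in \eqref{eqn:morphism_crossed_Schreier_extensions_factorised}, with components $\varphi^\star,\varphi,\psi,1_M$, I would set $D_1(\varphi,\psi)=\varphi^\star\colon A\to A'$. This is a monoid homomorphism, being induced by the universal property of the kernel $(A',k')$ of $\delta'$, since $\delta'\varphi k=\psi\delta k=0$. It remains to check that $\varphi^\star$ is equivariant. Take $m\in M$ and $a\in A$, and choose $c\in C$ with $\pi(c)=m$; $\pi$ is surjective, being a cokernel in $\mathbf{Mon}$ with Schreier kernel (alternatively, $\pi$ is a Schreier extension, hence surjective). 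Then $\pi'\psi(c)=\pi(c)=m$, so $\psi(c)$ is a legitimate choice for computing $\beta'(m)$. Using the second square in \eqref{eqn:mor_crossed_semimod}, which says $\varphi(\alpha(c)(b))=\alpha'(\psi(c))(\varphi(b))$, and the injectivity of $k'$:
\[
k'\varphi^\star(\beta(m)(a))=\varphi k(\alpha(c)(a))=\alpha'(\psi(c))(\varphi k(a))=\alpha'(\psi(c))(k'\varphi^\star(a))=k'\big(\beta'(m)(\varphi^\star(a))\big).
\]
Here I use that the restricted action on $A$ is literally the restriction of $\alpha$ along $k$ (and likewise for $\alpha'$ along $k'$). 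Cancelling $k'$ gives $\varphi^\star\beta(m)=\beta'(m)\varphi^\star$.

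Functoriality then comes from the uniqueness in the universal property of kernels: identities are sent to identities, and $(\varphi_2\varphi_1)^\star=\varphi_2^\star\varphi_1^\star$. The main point to be careful about is the well-definedness step, namely that $\beta'(m)$ may be computed with the particular lift $\psi(c)$ rather than a representative. This is exactly the independence of the choice of lift proved in \cite{AMMPS}, so it will be invoked rather than reproved. Note also that the Schreier-morphism conditions on arrows are not needed for this functoriality, only the crossed semimodule morphism axioms.
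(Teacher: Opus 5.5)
Your proposal is correct and is essentially the paper's proof: you define $D_1$ on arrows by $\varphi^\star$, compute $\beta'(m)$ using the lift $\psi(c)$ of $m$ (legitimate by the independence of the choice of lift from \cite{AMMPS}), and conclude from the action square in \eqref{eqn:mor_crossed_semimod} restricted to the kernels $A$ and $A'$. The extra details you supply are points the paper leaves implicit: that $(A,\beta)$ lies in $can\text{-}\mathcal{S}\mathrm{mod}_M$, and that identities and composites are preserved by uniqueness of kernel-induced maps.
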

(Observe that the difference with $com\text{-}\mathrm{XSExt}_M$ is that in this case we are not assuming $B$ to be commutative, though the commutativity of the kernel $A$ is always guaranteed by the axioms of the crossed semimodule $\delta.$)
\begin{proof}
Recall that a morphism in $\mathrm{XSExt}_M$ is given by a commutative diagram
\begin{equation*}
\xymatrix{
A \ar@{>->}[r]^-k \ar[d]_-{\varphi^\star} &B \ar[r]^-{\delta} \ar[d]_-{\varphi} &C \ar@{->>}[r]^-{\pi} \ar[d]^-{\psi} &M\ar@{=}[d]  \\
{A'} \ar@{>->}[r]_-{k'} &{B'} \ar[r]_-{\delta'} &{C'} \ar@{->>}[r]_-{\pi'} &M
}
\end{equation*}
in which $(\varphi,\psi)$ is a morphism of crossed semimodules, and such that in the factorisation
\begin{equation*}
\xymatrixrowsep{1.3pc}
\xymatrix{
A \ar[dd]_-{\varphi^\star} \ar@{>->}[r]^-k &B \ar[dd]_-{\varphi} \ar@{->>}[rd]_-{\rho} \ar[rr]^-{\delta} & \ &C  \ar[dd]_-{\psi} \ar@{->>}[r]^-{\pi} &{M}  \ar@{=}[dd] \\
 &\ &N \ar[dd]_(.20){\psi^\star} \ar@{>->}[ru]_-{\nu}&\ &\ &\ \\
{A'} \ar@{>->}[r]^-{k'} &{B'} \ar@{->>}[rd]_-{\rho'} \ar[rr]^(.70){\delta'} & \ &{C'} \ar@{->>}[r]^-{\pi'} &{M} \\
 &\ &{N'} \ar@{>->}[ru]_-{\nu'}&\ &\ &\
}
\end{equation*}
both $(\varphi^\star,\varphi,\psi^\star)$ and $(\psi^\star,\psi,1_M)$ are morphisms of Schreier extensions.

Then, denoting by $\beta\colon M\longrightarrow\mathrm{End}(A)$ and $\beta'\colon M\longrightarrow\mathrm{End}(A')$ the induced actions, the point is to show that $\varphi^\star\colon (A,\beta)\longrightarrow(A',\beta')$ is a morphism in $\mathcal{S}\mathrm{mod}_M,$ i.e. that $\varphi^\star$ is equivariant under the actions: the functoriality of $D_1$ will follow immediately.

We know that for every $m\in M$ and $a\in A$ we can write $\beta(m)(a)=\alpha(c)(a),$ where $\alpha\colon C\longrightarrow\mathrm{End}(A)$ is the action given by the crossed semimodule $\delta$ (using the fact that $\pi$ is a Schreier extension and $N$ is cancellative) and $c\in C$ is any element such that $\pi(c)=m.$

But then, if $\pi(c)=m,$ we also have $m=\pi'\big(\psi(c)\big),$ so that (realising $k$ and $k'$ as inclusions, for simplicity)
\begin{equation*}
\begin{split}
\beta'(m)\big(\varphi^\star(a)\big)&=\alpha'\big(\psi(c)\big)\big(\varphi^\star(a)\big)\\
&=\varphi^\star\big(\alpha(c)(a)\big)\\
&=\varphi^\star\big(\beta(m)(a)\big),
\end{split}
\end{equation*}
and we are done.

Observe that we have used the fact that since the right-hand side diagram in \eqref{eqn:mor_crossed_semimod} commutes by assumption, and $\alpha,$ $\alpha'$ restrict to actions on $A$ and $A',$ the diagram
\begin{equation*}
\xymatrix{
{C\times A} \ar[d]_-{\psi\times\varphi^\star} \ar@{-->}[r]^-{\alpha} &A \ar[d]^-{\varphi^\star} \\
{C'\times A'} \ar@{-->}[r]_-{\alpha'} &{A'}
}
\end{equation*}
is also commutative.
\end{proof}
Of course, then, the functor $D_1$ restricts to a functor ${D_1}_{\vert}$ on the full subcategory $com\text{-}\mathrm{XSExt}_M\subseteq \mathrm{XSExt}_M.$

We can thus define $d_1$ in \eqref{eqn:new_functor_d_1} as the only functor making the following commute:
\begin{equation}
\label{def:d_1}
\begin{aligned}
\xymatrixcolsep{2.5pc}
\xymatrixrowsep{3pc}
\xymatrix{
{\mathcal{R}\mathrm{SCat}(cc\text{-}SExt_M)} \ar[d]^-{\wr}_-{F} \ar[r]^-{d_1} &{\mathbf{CMon}(cc\text{-}SExt_M)} \\
{com\text{-}\mathrm{XSExt}_M} \ar[r]_-{{D_1}_{\vert}} &{can\text{-}\mathcal{S}\mathrm{mod}_M,} \ar[u]_-{\wr}
}
\end{aligned}
\end{equation}
where $F$ (as in \eqref{eqn:F}) is the equivalence of Theorem \ref{thm:equivalence_RSCat-XSExt} and the equivalence $can\text{-}\mathcal{S}\mathrm{mod}_M\cong\mathbf{CMon}(cc\text{-}SExt_M)$ $(~\cong cc\text{-}SPt_M)$ is the content of \cite[Theorem 6.5]{AMR} (see also the discussion preceding Proposition 6.10 in the same \cite{AMR}).

Observe that it follows by the equivalent descriptions for the direction of an $\mathcal{R}$-full Schreier internal category $\mathcal{E}$ in $cc\text{-}SExt_M,$ that, at the level of objects, the functor $d_1$ defined by means of the commutative diagram \eqref{def:d_1} provides exactly the direction $d_1(\mathcal{E})$ in the sense of Definition \ref{def:1-direction}; and indeed, since $d_1$ and the restricted functor ${D_1}_{\vert}$ are equivalent by construction, from now on we shall denote also the latter as $d_1.$
\begin{definition}
The functor $d_1$ in \eqref{eqn:new_functor_d_1} is called the $1$\emph{-dimensional direction functor} on $cc\text{-}SExt_M.$
\end{definition}
By Remark \ref{rmk:d_1-is-d_1}$(1),$ as was our goal, this functor extends to $cc$-Schreier extensions of monoids the classical $1$-dimensional direction functor
\begin{equation*}
d_1\colon\mathrm{AAGpd}\big(\mathrm{Mal}(\mathcal{C})\big)\longrightarrow\mathbf{Ab}\big(\mathrm{Mal}(\mathcal{C})\big)
\end{equation*}
of \cite{aspherical, Bourn-Rodelo}, for the slice categories $\mathcal{C}=\mathbf{Gp}/G$ of the category of groups.
\section{Properties of the functors $D_1$ and $d_1$}
We shall now study the main properties of the functor
\begin{equation*}
D_1\colon \mathrm{XSExt}_M\longrightarrow can\text{-}\mathcal{S}\mathrm{mod}_M,
\end{equation*}
as defined in Proposition \ref{prop:functor_D_1}, and of its restriction
\begin{equation*}
d_1\colon com\text{-}\mathrm{XSExt}_M\cong \mathcal{R}\mathrm{SCat}(cc\text{-}SExt_M\longrightarrow \mathbf{CMon}(cc\text{-}SExt_M)\cong can\text{-}\mathcal{S}\mathrm{mod}_M.
\end{equation*}

Our goal is to show that these functors satisfy the conditions of the following well known general theorem:
\begin{theorem}
\label{thm:cofibration_monoidal}
Let $\mathcal{U}$ be a category with finite products and ${F}\colon\mathcal{U}\longrightarrow\mathcal{V}$ a cofibration which preserves finite products. Suppose also that the cocartesian morphisms in $\mathcal{U}$ are stable under finite products. Then, if $(M,\omega\colon M\times M\longrightarrow M,\varepsilon\colon1\longrightarrow M)$ is an internal monoid in $\mathcal{V},$ there results a monoidal structure on the fibre ${F}^{-1}(M),$ which is symmetric as soon as the internal monoid $M$ is commutative.
\end{theorem}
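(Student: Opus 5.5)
The plan is to build the tensor product on the fibre by pushing forward along the monoid operation, i.e.\ by cocartesian lifting. Given objects $X,Y\in F^{-1}(M)$, their product $X\times Y$ exists in $\mathcal{U}$, and since $F$ preserves finite products we have $F(X\times Y)\cong M\times M$. Since $F$ is a cofibration, we choose a cocartesian lifting
\[
\overline{\omega}_{X,Y}\colon X\times Y\longrightarrow X\otimes Y
\]
of $\omega\colon M\times M\to M$ at $X\times Y$. We then define $X\otimes Y$ to be its codomain, which lies in $F^{-1}(M)$. Similarly, the unit $I$ is the codomain of a cocartesian lifting of $\varepsilon\colon 1\to M$ at the terminal object $1$ of $\mathcal{U}$, which lies over the terminal object of $\mathcal{V}$ by product preservation.

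Functoriality of $\otimes$ on fibre morphisms $u\colon X\to X'$ and $v\colon Y\to Y'$ comes from the universal property of cocartesian maps. The composite $\overline{\omega}_{X',Y'}\circ(u\times v)$ lies over $\omega\circ(1\times 1)=\omega$. So there is a unique vertical map $u\otimes v\colon X\otimes Y\to X'\otimes Y'$ through which it factors. Uniqueness then gives $1\otimes 1=1$ and compatibility with composition.

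For the associator, we compare $(X\otimes Y)\otimes Z$ and $X\otimes(Y\otimes Z)$. Each is the target of a morphism out of $X\times Y\times Z$, namely $\overline{\omega}\circ(\overline{\omega}\times 1)$ and $\overline{\omega}\circ(1\times\overline{\omega})$. This is where the hypothesis that cocartesian morphisms are stable under finite products is used: $\overline{\omega}_{X,Y}\times 1_Z$ is cocartesian over $\omega\times 1$, and composites of cocartesian maps are cocartesian. Hence both composites are cocartesian, over $\omega(\omega\times1)$ and $\omega(1\times\omega)$ respectively. These two maps of $\mathcal{V}$ are equal by associativity of $M$. Two cocartesian liftings of the same morphism at the same domain have codomains related by a unique vertical isomorphism compatible with the liftings, and this isomorphism is the associator. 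The unitors are obtained the same way. We identify $X\cong 1\times X$ and use that $\overline{\varepsilon}\times 1_X\colon 1\times X\to I\times X$ is cocartesian over $\varepsilon\times 1$. Then $\overline{\omega}\circ(\overline{\varepsilon}\times1)$ is cocartesian over $\omega(\varepsilon\times1)=1_M$, and so is the identity of $X$; comparing them yields $I\otimes X\cong X$. When $M$ is commutative, the symmetry $X\otimes Y\cong Y\otimes X$ arises in the same way by comparing $\overline{\omega}_{X,Y}$ with $\overline{\omega}_{Y,X}\circ\mathrm{tw}$. The latter is cocartesian, since isomorphisms are cocartesian, and lies over $\omega\circ\mathrm{tw}=\omega$.

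Naturality of these isomorphisms and the coherence axioms (pentagon, triangle, hexagons, and $\sigma^2=1$) are all handled by the uniqueness clause of cocartesian factorisations. Each side of a coherence diagram is a vertical map out of the codomain of a cocartesian morphism from an iterated product such as $X\times Y\times Z\times W$. Both sides composed with that cocartesian morphism equal one and the same lifting-comparison, so they coincide.

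I expect the main obstacle to be bookkeeping rather than substance. One issue is that the canonical isomorphisms $F(X\times Y)\cong F(X)\times F(Y)$ and the associativity isomorphisms of products in $\mathcal{U}$ must be tracked so that "lies over $\omega$" is literally meaningful. The other is verifying that all the iterated maps $\overline{\omega}\times 1$ remain cocartesian, which is exactly what the stability hypothesis provides. Alternatively, the whole argument can be packaged by observing that $F^{-1}$ defines a pseudofunctor $\mathcal{V}\to\mathbf{Cat}$ (via pushforward $\omega_!$) that preserves finite products, so it carries internal (commutative) monoids to (symmetric) monoidal categories.
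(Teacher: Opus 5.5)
Your argument is correct and is essentially the standard proof in Bourn's paper, which the authors cite instead of proving the theorem themselves: $X\otimes Y$ is the codomain of a cocartesian lift of $\omega$ at $X\times Y$, and the unit is a cocartesian lift of $\varepsilon$ at $1$. All constraint isomorphisms, naturality and coherence come from uniqueness of vertical factorisations through cocartesian maps, and product-stability guarantees that iterated lifts such as $\overline{\omega}\circ(\overline{\omega}\times 1)$ stay cocartesian. One small correction, to your closing aside only: the pseudofunctor $A\mapsto F^{-1}(A)$ need not preserve finite products, since the comparison $F^{-1}(A)\times F^{-1}(B)\to F^{-1}(A\times B)$, $(X,Y)\mapsto X\times Y$, already fails to be an equivalence for $\mathcal{V}=1$, where the fibre is all of $\mathcal{U}$; it is, however, lax monoidal with respect to products, and that is all you need to transport (commutative) monoids to (symmetric) pseudomonoids.
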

(See for example \cite{bourn-direction} for a proof.)

Accordingly, we shall deduce that the fibres of $D_1$ and $d_1$ are naturally symmetric monoidal categories, and we shall investigate the commutative monoid structure resulting on the sets of their connected components. This will allow us to take a step forward in the construction of the cohomology theory for monoids, with coefficients in semimodules, which began in \cite{AMR}.

First, we need to understand how finite products are defined in $\mathrm{XSExt}_M.$

As for the empty product, it is immediate that for any monoid $M$ the trivial sequence
\begin{equation*}
\xymatrix{
0 \ar@{=}[r] &0 \ar[r] &M \ar@{=}[r]  &M
}
\end{equation*}
is a terminal object in both $\mathrm{XSExt}_M$ and $com\text{-}\mathrm{XSExt}_M,$ which is then mapped by $D_1$ and $d_1$ to the trivial $M$-semimodule $0$ (i.e., to a terminal object in $ can\text{-}\mathcal{S}\mathrm{mod}_M$).

Next, suppose that we are given two crossed Schreier extensions
\begin{equation*}
\xymatrixrowsep{1.5pc}
\xymatrix{
{\mathcal{E}:A} \ar@{>->}[r]^-k &B \ar@{->>}[rd]_-{\rho} \ar[rr]^-{\delta} & \ &C \ar@{->>}[r]^-{\pi} &{M} \\
 &\ &N \ar@{>->}[ru]_-{\nu}&\ &\ &\
}
\end{equation*}
and
\begin{equation*}
\xymatrixrowsep{1.5pc}
\xymatrix{
{\mathcal{E}':A'} \ar@{>->}[r]^-{k'} &{B'} \ar@{->>}[rd]_-{\rho'} \ar[rr]^-{\delta'} & \ &{C'} \ar@{->>}[r]^-{\pi'} &{M} \\
 &\ &{N'} \ar@{>->}[ru]_-{\nu'}&\ &\ &\
}
\end{equation*}
and denote by $\alpha\colon C\longrightarrow \mathrm{End}(B)$ and  $\alpha'\colon C'\longrightarrow \mathrm{End}(B')$ the actions in the crossed semimodules $\delta$ and $\delta',$ respectively.

Then $\delta\times\delta'\colon C\times C'\longrightarrow B\times B'$ is again a crossed semimodule, under the product action
\begin{equation}
\label{eqn:prodact1}
\xymatrix{ {\alpha\times\alpha'\colon C\times C'} \ar[r] &{\mathrm{End}(B)\times \mathrm{End}(B')} \ar@{>->}[r] &{\mathrm{End}(B\times B'),}}
\end{equation}
and
\begin{equation*}
\xymatrixrowsep{1.5pc}
\xymatrix{
{A\times A'} \ar@{>->}[r]^-{k\times k'} &{B\times B'} \ar@{->>}[rd]_-{\rho\times\rho'} \ar[rr]^-{\delta\times\delta'} & \ &{C\times C'} \ar@{->>}[r]^-{\pi\times \pi'} &{M\times M} \\
 &\ &{N\times N'} \ar@{>->}[ru]_-{\nu\times \nu'}&\ &\ &\
}
\end{equation*}
is a crossed Schreier extension.

By \cite{AMMPS}, pulling back $\pi\times \pi'$ along the diagonal $\Delta_M=\la 1_M,1_M\ra$ yields a commutative diagram
\begin{equation}
\label{eqn:prod_x}
\begin{aligned}
\xymatrixrowsep{1.3pc}
\xymatrix{
{A\times A'} \ar@{=}[dd] \ar@{>->}[r]^-{k\times k'} &{B\times B'} \ar@{=}[dd] \ar@{->>}[rd]_-{\rho\times\rho'} \ar[rr]^-{\delta''} & \ &{C''} \ar@{->>}[r]^-{\pi''} \pullback \ar@{>->}[dd]_-{\iota''} &{M} \ar@{>->}[dd]^-{\Delta_M} \\
 &\ &{N\times N'} \ar@{=}[dd]  \ar@{>->}[ru]_-{\nu''}&\ &\ &\ \\
 {A\times A'} \ar@{>->}[r]^-{k\times k'} &{B\times B'} \ar@{->>}[rd]_-{\rho\times\rho'} \ar[rr]^(.70){\delta\times\delta'} & \ &{C\times C'} \ar@{->>}[r]^-{\pi\times \pi'} &{M\times M} \\
 &\ &{N\times N'} \ar@{>->}[ru]_-{\nu\times\nu'}&\ &\ &\
}
\end{aligned}
\end{equation}
in which the upper row, as well as the lower one, is a crossed Schreier extension. The action of $C''$ on $B\times B'$ in the crossed semimodule $\delta''$ is given by the composition
\begin{equation}
\label{eqn:prodact2}
(\alpha\times \alpha')\iota''\colon C''\longrightarrow\mathrm{End}(B\times B),
\end{equation}
and the whole diagram \eqref{eqn:prod_x} is actually a morphism of crossed Schreier extensions, in the sense of \cite{AMMPS}.

Observe that $C''$ can be realised as the pullback
\begin{equation*}
\xymatrix{
{C''} \ar[r]^-{\pi_2} \ar[d]_-{\pi_1} \pullback &{C'} \ar[d]^-{\pi'} \\
{C} \ar[r]_-{\pi} &{M,}
}
\end{equation*}
by which description one has $\iota''=\la\pi_1,\pi_2 \ra,$ $\pi''=\pi'\pi_2=\pi\pi_1,$ $\nu''=\nu\times_M\nu'$ and $\delta''=\delta\times_M\delta'.$

It is straightforward, then, that
\begin{equation}
\label{eqn:product_XES}
\xymatrix{
{A\times A'} \ar@{>->}[r]^-{k\times k'} &{B\times B'} \ar[r]^-{\delta''} &{C''} \ar@{->>}[r]^-{\pi''} &{M}
}
\end{equation}
is a product of $\mathcal{E}$ and $\mathcal{E}'$ in $\mathrm{XSExt}_M,$ with projections
\begin{equation*}
\begin{aligned}
{\xymatrixcolsep{0.1pc}
\xymatrixrowsep{2.35pc}
\xymatrix{
{\mathcal{E}\times\mathcal{E}'} \ar[dd]_-{p_1} &{:}\\
{} \\
{\mathcal{E}} &{:}
}}
{\xymatrix{
{A\times A'} \ar@{=}[d] \ar@{>->}[r]^-{k\times k'} &{B\times B'} \ar[r]^-{\delta''} \ar@{=}[d] &{C''} \ar@{->>}[r]^-{\pi''} \ar[d]\pullback \ar@{>->}[d]_-{\iota''} &{M} \ar@{>->}[d]^-{\Delta_M} \\
 {A\times A'} \ar[d]_-{p_1} \ar@{>->}[r]^-{k\times k'} &{B\times B'} \ar[d]^-{p_1} \ar[r]^-{\delta\times\delta'} &{C\times C'} \ar[d]^-{p_1} \ar@{->>}[r]^-{\pi\times \pi'} &{M\times M} \ar[d]^-{p_1} \\
 {A} \ar@{>->}[r]_-{k} &{B} \ar[r]_-{\delta} &{C} \ar@{->>}[r]_-{\pi} &{M}
}}
\end{aligned}
\end{equation*}
and
\begin{equation*}
\begin{aligned}
{\xymatrixcolsep{0.1pc}
\xymatrixrowsep{2.35pc}
\xymatrix{
{\mathcal{E}\times\mathcal{E}'} \ar[dd]_-{p_2} &{:}\\
{} \\
{\mathcal{E}'} &{:}
}}
{\xymatrix{
{A\times A'} \ar@{=}[d] \ar@{>->}[r]^-{k\times k'} &{B\times B'} \ar[r]^-{\delta''} \ar@{=}[d] &{C''} \ar@{->>}[r]^-{\pi''} \ar[d]\pullback \ar@{>->}[d]_-{\iota''} &{M} \ar@{>->}[d]^-{\Delta_M} \\
 {A\times A'} \ar[d]_-{p_2} \ar@{>->}[r]^-{k\times k'} &{B\times B'} \ar[d]^-{p_2} \ar[r]^-{\delta\times\delta'} &{C\times C'} \ar[d]^-{p_2} \ar@{->>}[r]^-{\pi\times \pi'} &{M\times M} \ar[d]^-{p_2} \\
 {A'} \ar@{>->}[r]_-{k'} &{B'} \ar[r]_-{\delta'} &{C'} \ar@{->>}[r]_-{\pi'} &{M.}
}}
\end{aligned}
\end{equation*}
Since $B\times B'$ is commutative if so are $B$ and $B',$ the product \eqref{eqn:product_XES} belongs to $com\text{-}\mathrm{XSExt}_M$ as soon as both $\mathcal{E}$ and $\mathcal{E}'$ are in $com\text{-}\mathrm{XSExt}_M$: by this, and what was said above about the terminal object, we conclude that $com\text{-}\mathrm{XSExt}_M$ is closed under finite products in $\mathrm{XSExt}_M.$

Finally, it follows by \eqref{eqn:prodact2} and \eqref{eqn:prodact1} that the $M$-action on $A\times A'$ induced by \eqref{eqn:product_XES} is given component-wise by the actions induced by $\mathcal{E}$ and $\mathcal{E}',$ and we have:
\begin{proposition}
The functors $D_1$ and $d_1$ preserve finite products.
\end{proposition}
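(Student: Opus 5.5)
We need to check that $D_1$ sends the terminal object to a terminal object and binary products to binary products. The statement for $d_1$ then follows at once: $com\text{-}\mathrm{XSExt}_M$ is closed under finite products in $\mathrm{XSExt}_M$, as observed above. Also, $d_1$ coincides with the restriction ${D_1}_{\vert}$ up to the equivalences $F$ of Theorem~\ref{thm:equivalence_RSCat-XSExt} and $can\text{-}\mathcal{S}\mathrm{mod}_M\cong\mathbf{CMon}(cc\text{-}SExt_M)$, and equivalences preserve finite products.

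For the empty product, the trivial sequence $0\to0\to M\to M$ is terminal in $\mathrm{XSExt}_M$, and its kernel is $0$ with the only possible action. This is terminal in $can\text{-}\mathcal{S}\mathrm{mod}_M$.

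For binary products, take $\mathcal{E},\mathcal{E}'$ and their product \eqref{eqn:product_XES} with projections $p_1,p_2$. Finite products in $can\text{-}\mathcal{S}\mathrm{mod}_M$ are computed as products of the underlying monoids with the componentwise action, since a homomorphism into $A\times A'$ is equivariant if and only if both components are. So it suffices to show three things. First, $D_1(\mathcal{E}\times\mathcal{E}')$ has underlying monoid $A\times A'$. Second, its induced action $\beta''$ is componentwise. Third, $D_1(p_1)$ and $D_1(p_2)$ are the product projections. The third point is clear: by definition, $D_1$ sends a morphism to its restriction $(\,)^\star$ on kernels, and the kernel components of $p_1,p_2$ are the projections $A\times A'\to A$ and $A\times A'\to A'$.

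The only real step is the second point. Take $m\in M$ and $(a,a')\in A\times A'$. Choose $c\in C$ and $c'\in C'$ with $\pi(c)=m=\pi'(c')$; these exist because $\pi$ and $\pi'$ are surjective. Then $(c,c')\in C''$ and $\pi''(c,c')=m$. By the description of the induced action recalled in Section~\ref{sec:crossed_semimodules}, $\beta''(m)$ is computed from any preimage of $m$ in $C''$. By \eqref{eqn:prodact2} and \eqref{eqn:prodact1}, the action of $C''$ on $B\times B'$ is the product action restricted along $\iota''$. Hence
\[
\beta''(m)(a,a')=(\alpha\times\alpha')(c,c')(a,a')=\big(\alpha(c)(a),\alpha'(c')(a')\big)=\big(\beta(m)(a),\beta'(m)(a')\big).
\]
Here the last equality holds because $\pi(c)=m$ and $\pi'(c')=m$, so $\alpha(c)(a)=\beta(m)(a)$ and $\alpha'(c')(a')=\beta'(m)(a')$. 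This uses that $\beta$ does not depend on the choice of preimage, which was proven in \cite{AMMPS}.

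I expect the main (mild) obstacle to be making sure that the upper row of \eqref{eqn:prod_x} really is a crossed Schreier extension. This is needed so that $\beta''$ is defined at all, and it is exactly what \cite{AMMPS} guarantees. Granting that, the computation above finishes the proof.
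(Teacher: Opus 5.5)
Your proposal is correct and takes the same route as the paper: the paper also forms the product in $\mathrm{XSExt}_M$ by pulling back $\delta\times\delta'$ along $\Delta_M$, sends the terminal object to $0$, and concludes from \eqref{eqn:prodact2} and \eqref{eqn:prodact1} that the induced action on $A\times A'$ is componentwise. You additionally spell out the choice of preimages $(c,c')\in C''$ and the behaviour of the projections, which the paper leaves implicit.
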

We come to the question of determining the cocartesian morphisms for $D_1$ and $d_1.$

Recall that an arrow $f\colon Y\longrightarrow X$ in a category $\mathcal{U}$ is \emph{cocartesian} for a functor $F\colon\mathcal{U}\longrightarrow\mathcal{V}$ (briefly, $f$ is $F$\emph{-cocartesian}) if for every $g\colon Y\longrightarrow Z$ in $\mathcal{U}$ and for every $v\colon F(X)\longrightarrow F(Z)$ in $\mathcal{V}$ such that $F(g)=v F(f),$ there exists a unique $h_{v}\colon X\longrightarrow Z$ satisfying $F(h_{v})=v$ and $h_{v}f=g.$ As an immediate consequence, if $f\colon Y\longrightarrow X$ and $f'\colon Y\longrightarrow X'$ are both $F$-cocartesian and share the same domain $Y,$ one must have $f'=gf$ for a unique isomorphism $g\colon X\xlongrightarrow{\sim}X'$ in $\mathcal{U}.$

Now, it is proven in \cite{AMMPS} that for every crossed Schreier extension
\begin{equation*}
\xymatrix{
{\mathcal{E}:A} \ar@{>->}[r]^-k &B \ar[r]^-{\delta} &C \ar@{->>}[r]^-{\pi} &{M} \\
}
\end{equation*}
and for every morphism $f\colon(A,\eta)\longrightarrow(A',\eta')$ in $can\text{-}\mathcal{S}\mathrm{mod}_M,$ where $\eta$ is the $M$-action induced on $A$ by $\mathcal{E}$ (so that $(A,\eta)=D_1(\mathcal{E})$), there is a universal construction, called the \emph{push forward} of $\mathcal{E}$ along $f,$ which allows to expand $f$ into a morphism
\begin{equation}
\label{eqn:pushforward}
\xymatrixrowsep{1.3pc}
\begin{aligned}
\xymatrix{
A \ar[dd]_-{f} \ar@{>->}[r]^-k &B \ar[dd]_-{\vartheta_f} \ar@{->>}[rd]_-{\rho} \ar[rr]^-{\delta} & \ &C  \ar@{=}[dd] \ar@{->>}[r]^-{\pi} &{M}  \ar@{=}[dd] \\
 &\ &N \ar@{=}[dd] \ar@{>->}[ru]_-{\nu}&\ &\ &\ \\
{A'} \ar@{>->}[r]^-{k_f} &{Q_f} \ar@{->>}[rd]_-{\rho_f} \ar[rr]^(.70){\delta_f} & \ &{C} \ar@{->>}[r]^-{\pi} &{M} \\
 &\ &{N} \ar@{>->}[ru]_-{\nu}&\ &\ &\
}
\end{aligned}
\end{equation}
of crossed Schreier extensions. The codomain
\begin{equation*}
\xymatrix{
{\mathcal{E}_f:A'} \ar@{>->}[r]^-{k_f} &{Q_f} \ar[r]^-{\delta_f} &C \ar@{->>}[r]^-{\pi} &{M}
}
\end{equation*}
of this morphism is obtained by considering a coequaliser $(Q_f,q)$ of $\la0,1_B\ra k=\la 0,k\ra$ and $\la 1_{A'},0\ra f=\la f,0\ra$
\begin{equation}
\label{eqn:costr_Q}
\begin{aligned}
\xymatrix{
A \ar[d]_-{f} \ar[r]^-k &{B} \ar[d]^-{\la0,1_B\ra} \ar@/^1.5pc/@{.>}[rdd]^-{\vartheta_f} &\ \\
{A'} \ar@/_1.5pc/@{.>}[rrd]_-{k_f} \ar[r]_-{\la1_{A'},0\ra} &{A'\times B} \ar@{->>}[rd]^-{q} &\ \\
&\ &{Q_f}
}
\end{aligned}
\end{equation}
and the crossed semimodule $\delta_f\colon Q_f\longrightarrow C$ results by the equalities $\delta p_B\la0,k\ra=\delta k=0=\delta p_B\la f,0\ra$ (where, of course, $p_B$ denotes the projection on $B$) and the universal property of the coequaliser.
\begin{remark}
If in the above construction one has $\mathcal{E}\in com\text{-}\mathrm{XSExt}_M,$ so that the monoid $B$ is both commutative and cancellative (by Lemma \ref{lemma:B_cancellative}$(1)$), the $C$-action $\alpha$ on $B$ in the crossed semimodule $\delta$ yields a well defined action $\omega\colon M\longrightarrow\mathrm{End}(B),$ $\omega(m)(b)=\alpha(c)(b),$ where $c\in C$ is any element such that $\pi(c)=m.$

Then the product $A'\times B$ in \eqref{eqn:costr_Q} is actually a biproduct of $(A',\eta')$ and $(B,\omega)$ in $can\text{-}\mathcal{S}\mathrm{mod}_M,$ and it follows that the push forward $\mathcal{E}_f$ is also in $ com\text{-}\mathrm{XSExt}_M,$ and that in the diagram \eqref{eqn:pushforward} the left hand square is a (pullback and a) pushout.
\end{remark}

The main point, for us, is that the universal property of the push forward, as discussed in \cite{AMMPS}, consists precisely in the fact that the morphism \eqref{eqn:pushforward} is cocartesian for $D_1.$ This gives us the foremost property of $D_1$:
\begin{proposition}
The functor $D_1$ is a cofibration whose cocartesian morphisms are the morphisms
\begin{equation}
\label{eqn:coc_mor}
\begin{aligned}
{\xymatrixcolsep{0.1pc}
\xymatrix{
{\mathcal{E}} \ar[d]_-{(\varphi,\psi)} &{:}\\
{\mathcal{E}'} &{:}
}}
{\xymatrix{
{A} \ar@{>->}[r]^-k \ar[d]_-{\varphi^\star} &B \ar[r]^-{\delta} \ar[d]_-{\varphi} &C \ar@{->>}[r]^-{\pi} \ar[d]^-{\psi} &M\ar@{=}[d]  \\
{A'} \ar@{>->}[r]_-{k'} &{B'} \ar[r]_-{\delta'} &{C'} \ar@{->>}[r]_-{\pi'} &M
}}
\end{aligned}
\end{equation}
in $\mathrm{XSExt}_M$ such that $\psi$ is an isomorphism of monoids.
\end{proposition}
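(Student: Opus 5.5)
The argument splits into two halves. First I show that every morphism $(\varphi,\psi)$ in $\mathrm{XSExt}_M$ with $\psi$ an isomorphism is $D_1$-cocartesian. Then I deduce the cofibration property and the converse inclusion from the existence of push forwards, together with the uniqueness of cocartesian lifts up to isomorphism.

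\emph{Step 1: these morphisms are cocartesian.} By \cite{AMMPS}, for every $\mathcal{E}$ and every $f\colon D_1(\mathcal{E})\to(A',\eta')$, the push forward \eqref{eqn:pushforward} is $D_1$-cocartesian. It has $\psi=1_C$, so $D_1$ is a cofibration. Now take $(\varphi,\psi)\colon\mathcal{E}\to\mathcal{E}'$ with $\psi$ an isomorphism, and put $f=\varphi^\star=D_1(\varphi,\psi)$.

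I first reduce to the case $\psi=1_C$. Transport $\mathcal{E}'$ along $\psi^{-1}$: take the crossed semimodule $\psi^{-1}\delta'\colon B'\to C$ with action $\alpha'\psi$ and cokernel $\pi'\psi$. Since $\pi'\psi=\pi$, this is a crossed Schreier extension isomorphic to $\mathcal{E}'$ via $(1_{B'},\psi)$. The induced morphism from $\mathcal{E}$ then has identity middle component.

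Next, by the universal property of the push forward, $(\varphi,1_C)$ factors as $(\vartheta_f,1_C)$ followed by a comparison $(\theta,1_C)\colon\mathcal{E}_f\to\mathcal{E}'$. Here $\theta\colon Q_f\to B'$ is induced by the coequaliser \eqref{eqn:costr_Q} from $\la k',\varphi\ra\colon A'\times B\to B'$, and we have $D_1(\theta,1_C)=1_{A'}$. Since isomorphisms are cocartesian and cocartesian morphisms compose, it suffices to show that $\theta$ is an isomorphism.

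For this I work with the Schreier decompositions. Both $B'$ and $Q_f$ are Schreier extensions of the same $N'=N$ by $A'$, because $\psi^\star$ is now the identity on $N$. The map $\theta$ is a morphism of Schreier extensions with identity on both the kernels and the quotients, and it preserves representatives, because $\varphi$ does and $\vartheta_f$ maps representatives to representatives. A short-five-lemma argument for Schreier extensions then shows that $\theta$ is bijective. Concretely, every $b'=k'(a')+\varphi(u)$ has the unique preimage $k_f(a')+\vartheta_f(u)$.

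\emph{Step 2: the converse.} Suppose $(\varphi,\psi)$ is cocartesian. The push forward $\mathcal{E}\to\mathcal{E}_{\varphi^\star}$ is also cocartesian over the same arrow $\varphi^\star$. By the uniqueness remark preceding the statement, the two differ by an isomorphism $\mathcal{E}_{\varphi^\star}\cong\mathcal{E}'$ in $\mathrm{XSExt}_M$. Its $C$-component $\chi\colon C\to C'$ is therefore an isomorphism, and $\psi=\chi\cdot 1_C$ is an isomorphism as well.

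The main obstacle is the bijectivity of $\theta$ in Step 1, in particular surjectivity. For that I need to check that $\varphi$ carries a representative of each $n\in N$ for $\rho$ to a representative for $\rho'$, and this is exactly the hypothesis that $(\varphi^\star,\varphi,\psi^\star)$ is a morphism of Schreier extensions. Injectivity should then follow from the cancellativity of $A'$ and the uniqueness of decompositions in the push forward.
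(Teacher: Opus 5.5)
Your proposal is correct and follows essentially the paper's proof. The shared skeleton is: the cofibration property from the push forward; the converse via uniqueness of cocartesian lifts over $\varphi^\star$; and, for $\psi$ invertible, factoring $(\varphi,\psi)$ through $(\vartheta_{\varphi^\star},1_C)$ and showing the comparison is an isomorphism.

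The differences are minor. The paper skips your transport step and applies the Short Five Lemma for Schreier extensions \cite[Proposition 4.5]{P-II} directly, using that $\psi^\star$ is an isomorphism and the kernel component is $1_{A'}$. Your explicit preimage $k_f(a')+\vartheta_f(u)$ is a hands-on proof of that lemma. Note that injectivity only needs uniqueness of decompositions in $B'$, not cancellativity of $A'$.
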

\begin{proof}
The fact that the construction of the cocartesian morphism \eqref{eqn:pushforward} can be carried out for every morphism $f\colon D_1(\mathcal{E})\longrightarrow(A',\eta')$ in the codomain $can\text{-}\mathcal{S}\mathrm{mod}_M$ of $D_1$ guarantees that $D_1$ is a cofibration.

Next, if \eqref{eqn:coc_mor} is cocartesian for $D_1$ and $(\vartheta_{\varphi^\star},1_C)\colon\mathcal{E}\longrightarrow\mathcal{E}_{\varphi^\star}$ is the push forward \eqref{eqn:pushforward} of $\mathcal{E}$ along $\varphi^\star=D_1(\varphi,\psi),$ the fact that $(\vartheta_{\varphi^\star},1_C)$ is also cocartesian on $\varphi^\star$ implies that there is a unique isomorphism $(\varphi',\psi')$
\begin{equation*}
\xymatrixrowsep{2.5pc}
\xymatrixcolsep{2.5pc}
\xymatrix{
{\mathcal{E}} \ar[r]^-{(\varphi,\psi)} \ar[rd]_-{(\vartheta_{\varphi^\star},1_C)} &{\mathcal{E}'} \ar@{.>}[d]^-{\exists!(\varphi',\psi')}_-{\wr} \\
&{\mathcal{E}_{\varphi^\star}}
}
\end{equation*}
in $\mathrm{XSExt}_M$ such that $(\vartheta_{\varphi^\star},1_C)=(\varphi',\psi')\circ(\varphi,\psi)=(\varphi'\varphi,\psi'\psi),$ so that $\psi={\psi'}^{-1}$ is an isomorphism in $\mathbf{Mon}.$

Conversely, suppose that in \eqref{eqn:coc_mor} the monoid homomorphism $\psi$ is an isomorphism, and consider the unique factorisation $(\varphi,\psi)=(\varphi',\psi')\circ(\vartheta_{\varphi^\star},1_C)$
\begin{equation*}
\centering
\xymatrixcolsep{2.5pc}
\xymatrix{
&{\mathcal{E}} \ar[r]^-{(\vartheta_{\varphi^\star},1_C)} \ar[d]_-{(\varphi,\psi)} &{\mathcal{E}_{\varphi^\star}} \ar@{.>}[dl]^{\exists! (\varphi',\psi')}\\
&{\mathcal{E}'}  &\
}
\xymatrix{
&\ \ar@{--}[d] \\
&\
}
\xymatrix{
&{(A,\eta)}  \ar[d]_-{\varphi^\star}  \ar[r]^-{\varphi^\star} &{(A',\eta')}  \ar@{=}[dl] \\
&{(A',\eta')} &\
}
\end{equation*}
through the push forward $\mathcal{E}_{\varphi^\star}.$ Then both $\psi'=\psi$ and the induced morphisms $\psi^\star$ and ${\varphi'}^\star=1_{A'}$
\begin{equation*}
\xymatrixrowsep{1.3pc}
\xymatrix{
{A'} \ar@{=}[dd] \ar@{>->}[r]^-{k_{\varphi^\star}} &{Q_{\varphi^\star}} \ar[dd]_-{\varphi'} \ar@{->>}[rd]_-{\rho_{\varphi^\star}} \ar[rr]^-{\delta_{\varphi^\star}} & \ &C  \ar[dd]_-{\psi}^-{\wr} \ar@{->>}[r]^-{\pi} &{M}  \ar@{=}[dd] \\
 &\ &N \ar[dd]_(.25){\psi^\star}^(.25){\wr} \ar@{>->}[ru]_-{\nu}&\ &\ &\ \\
{A'} \ar@{>->}[r]^-{k'} &{B'} \ar@{->>}[rd]_-{\rho'} \ar[rr]^(.70){\delta'} & \ &{C'} \ar@{->>}[r]^-{\pi'} &{M,} \\
 &\ &{N'} \ar@{>->}[ru]_-{\nu'}&\ &\ &\
}
\end{equation*}
are isomorphisms.

Since by definition of morphism in $\mathrm{XSExt}_M$ the diagram
\begin{equation*}
\xymatrix{
{A'} \ar@{>->}[r]^-{k} \ar@{=}[d] &{Q_{\varphi^\star}} \ar@{->>}[r]^-{\rho_{\varphi^\star}} \ar[d]^-{\varphi'} &{N} \ar[d]_-{\wr}^-{\psi^\star}\\
{A'} \ar@{>->}[r]_-{k'} &{B'} \ar@{->>}[r]_-{\rho'} &{N'}
}
\end{equation*}
is a morphism of Schreier extensions, by the Short Five Lemma for Schreier extensions of monoids (\cite[Proposition 4.5]{P-II}) $\varphi'$ is also an isomorphism.

Accordingly, $(\varphi',\psi')$ is an isomorphism in $\mathrm{XSExt}_M,$ and since $(\vartheta_{\varphi^\star},1_C)$ is cocartesian for $D_1$ and $(\varphi,\psi)=(\varphi',\psi')\circ(\vartheta_{\varphi^\star},1_C),$ we conclude that the same is true for $(\varphi,\psi).$
\end{proof}
By the equivalence $com\text{-}\mathrm{XSExt}_M\cong \mathcal{R}\mathrm{SCat}(cc\text{-}SExt_M)$ of \eqref{eqn:G}, we derive the analogous result for $d_1$:
\begin{corollary}
The direction functor $d_1\colon\mathcal{R}\mathrm{SCat}(cc\text{-}SExt_M)\longrightarrow can\text{-}\mathcal{S}\mathrm{mod}_M$ is a cofibration. A morphism in $\mathcal{R}\mathrm{SCat}(cc\text{-}SExt_M)$ is $d_1$-cocartesian if and only if it is $\mathcal{O}$-invertible, where $\mathcal{O}$ is the ``object of objects'' functor \eqref{eqn:O_R}.
\end{corollary}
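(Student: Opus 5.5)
The plan is to transport the preceding Proposition along the equivalence $F\colon\mathcal{R}\mathrm{SCat}(cc\text{-}SExt_M)\to com\text{-}\mathrm{XSExt}_M$ of Theorem~\ref{thm:equivalence_RSCat-XSExt}. By the definition in diagram~\eqref{def:d_1}, $d_1$ is (up to the equivalence $can\text{-}\mathcal{S}\mathrm{mod}_M\cong\mathbf{CMon}(cc\text{-}SExt_M)$) the composite ${D_1}_{\vert}\circ F$. Cofibrations and cocartesian arrows are invariant under precomposition with an equivalence: if $F$ is an equivalence with pseudo-inverse $G$, then an arrow $u$ is $({D_1}_{\vert}F)$-cocartesian if and only if $F(u)$ is ${D_1}_{\vert}$-cocartesian. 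This holds because $F$ is full, faithful and essentially surjective, so the lifting problems defining cocartesianness correspond bijectively. Therefore it suffices to prove both assertions for ${D_1}_{\vert}$ on $com\text{-}\mathrm{XSExt}_M$.

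The first step is to show that ${D_1}_{\vert}$ is a cofibration. The push forward construction~\eqref{eqn:pushforward} starts from $\mathcal{E}\in com\text{-}\mathrm{XSExt}_M$ and a morphism $f\colon D_1(\mathcal{E})\to(A',\eta')$. By the Remark preceding the Proposition, $\mathcal{E}_f$ again lies in $com\text{-}\mathrm{XSExt}_M$. Since $com\text{-}\mathrm{XSExt}_M$ is a full subcategory and the push forward is $D_1$-cocartesian in the larger category, it is a fortiori ${D_1}_{\vert}$-cocartesian: the test objects and arrows in the subcategory are among those in $\mathrm{XSExt}_M$, and the induced unique arrow lies in the full subcategory. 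Hence ${D_1}_{\vert}$ is a cofibration.

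The second step is to characterise the ${D_1}_{\vert}$-cocartesian arrows as those with $\psi$ invertible. I would repeat the argument of the Proposition verbatim inside $com\text{-}\mathrm{XSExt}_M$. For a cocartesian arrow $(\varphi,\psi)$, compare it with the push forward along $\varphi^\star$, which lies in the subcategory. Two cocartesian arrows over the same base with the same domain differ by an isomorphism, so $\psi$ is invertible. Conversely, if $\psi$ is invertible, factor through the push forward and use the Short Five Lemma \cite[Proposition 4.5]{P-II} to get an isomorphism. Both directions use only arrows between objects of the full subcategory, so the characterisation carries over unchanged.

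The final step translates the condition on $\psi$ back through $G$ and $F$. For a morphism $(h_1,h_0)\colon\mathcal{E}\to\mathcal{L}$ in $\mathcal{R}\mathrm{SCat}(cc\text{-}SExt_M)$, the component $\psi$ of $F(h_1,h_0)$ is $h_0\colon X_0\to Y_0$. So $(h_1,h_0)$ is $d_1$-cocartesian iff $h_0$ is a monoid isomorphism. It remains to see that this is the same as $\mathcal{O}(h_1,h_0)=h_0$ being an isomorphism $\mathbb{E}_0\to\mathbb{L}_0$ in $cc\text{-}SExt_M$. The inverse $h_0^{-1}$ commutes with the maps to $M$, and it preserves representatives: if $u$ is a representative in $\mathbb{L}_0$, then $u=h_0(v)$ with $v$ in the right fibre, and $h_0$ induces an isomorphism of kernels sending the decomposition of elements in $h_0(v)$'s fibre back. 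Concretely, $h_0^{-1}$ restricted to kernels is the inverse of $h_0^\star$, and uniqueness of decompositions transfers. I expect this verification, that a morphism of $cc$-Schreier extensions which is a bijection is an isomorphism in $cc\text{-}SExt_M$, together with the bookkeeping of the equivalence, to be the only mildly delicate point. Everything else is formal.
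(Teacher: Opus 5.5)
Your proposal is correct and follows the paper's route: the paper proves this corollary simply by transporting the preceding Proposition along the equivalence $F$ of Theorem~\ref{thm:equivalence_RSCat-XSExt}, under which $\psi=h_0=\mathcal{O}(h_1,h_0)$. Your additional checks are exactly the details the paper leaves implicit. These are that push forwards stay in the full subcategory $com\text{-}\mathrm{XSExt}_M$ (by the Remark), that the characterisation survives the restriction, and that a bijective morphism in $cc\text{-}SExt_M$ is already an isomorphism there.
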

Observe that if
\begin{equation*}
\xymatrix{
A \ar@{>->}[r]^-k \ar[d]_-{\varphi^\star} &B \ar[r]^-{\delta} \ar[d]_-{\varphi} &C \ar@{->>}[r]^-{\pi} \ar[d]_-{\wr}^-{\psi} &M\ar@{=}[d]  \\
{A'} \ar@{>->}[r]_-{k'} &{B'} \ar[r]_-{\delta'} &{C'} \ar@{->>}[r]_-{\pi'} &M
}
\end{equation*}
and
\begin{equation*}
\xymatrix{
K \ar@{>->}[r]^-l \ar[d]_-{\vartheta^\star} &U \ar[r]^-{\sigma} \ar[d]_-{\vartheta} &V \ar@{->>}[r]^-{\tau} \ar[d]_-{\wr}^-{\zeta} &M\ar@{=}[d]  \\
{K'} \ar@{>->}[r]_-{l'} &{U'} \ar[r]_-{\sigma'} &{V'} \ar@{->>}[r]_-{\tau'} &M
}
\end{equation*}
are cocartesian morphisms (for $D_1$ or $d_1$), then their product $(\varphi,\psi)\times(\vartheta,\zeta)$ in $(com\text{-})\mathrm{XSExt}_M$ is given by
\begin{equation*}
\xymatrix{
{A\times K} \ar@{>->}[r]^-{k\times l} \ar[d]_-{\varphi^\star\times\vartheta^\star} &{B\times U} \ar[r]^-{\delta''} \ar[d]_-{\varphi\times\vartheta} &P \ar@{->>}[r]^-{p} \ar[d]_-{\wr}^-{\xi} &M\ar@{=}[d]  \\
{A'\times K'} \ar@{>->}[r]_-{k'\times l} &{B'\times U'} \ar[r]_-{\sigma''} &{P'} \ar@{->>}[r]_-{p'} &{M,}
}
\end{equation*}
where the isomorphism $\xi\colon P\longrightarrow P'$ results from the pullbacks
\begin{equation*}
\xymatrix{
P \pullback \ar@{.>}[rd]|{\xi}\ar[r]^-{\sim} \ar[d]_-{\wr} &{\cdot} \ar[r] \ar[d]_-{\wr} \pullback &{V} \ar@/^1.5pc/[dd]^-{\tau} \ar[d]_-{\wr}^-{\zeta} \\
{\cdot} \ar[d] \ar[r]^-{\sim}  \pullback &{P'} \ar[d]  \ar[r] \pullback &{V'} \ar[d]^-{\tau'} \\
C \ar@/_1.5pc/[rr]_-{\pi} \ar[r]^-{\sim}_-{\psi} &{C'} \ar[r]_-{\pi'} &{M.}
}
\end{equation*}
Accordingly, $(\varphi,\psi)\times(\vartheta,\zeta)$ is also cocartesian, and we have:
\begin{proposition}
The cocartesian morphisms for $D_1$ (resp., for $d_1$) are stable under finite products.
\end{proposition}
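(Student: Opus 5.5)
The plan is to use the characterisation of cocartesian morphisms in the preceding Proposition: a morphism $(\varphi,\psi)$ in $\mathrm{XSExt}_M$ is $D_1$-cocartesian if and only if $\psi$ is an isomorphism of monoids. Stability under finite products then reduces to checking that the product of two such morphisms again has an invertible third component. The empty product needs a separate check: its identity $1_{\mathcal{T}}$ on the terminal object $\mathcal{T}=(0\to 0\to M\to M)$ has third component $1_M$, so it is cocartesian.

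For binary products, take cocartesian $(\varphi,\psi)\colon\mathcal{E}\to\mathcal{E}'$ and $(\vartheta,\zeta)\colon\mathcal{F}\to\mathcal{F}'$. By the explicit description of products in $\mathrm{XSExt}_M$ (the construction \eqref{eqn:product_XES}), the product morphism has components $\varphi^\star\times\vartheta^\star$, $\varphi\times\vartheta$ and $\xi$. Here $\xi\colon C\times_M V\to C'\times_M V'$ is the morphism induced between the pullbacks by $\psi$ and $\zeta$, using $\pi'\psi=\pi$ and $\tau'\zeta=\tau$. This follows from the universal property of the product and from the fact that the projections are the composites displayed above.

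The key step is to show that $\xi=\psi\times_M\zeta$ is an isomorphism. Concretely, $\xi(c,v)=(\psi(c),\zeta(v))$. Its inverse is $(c',v')\mapsto(\psi^{-1}(c'),\zeta^{-1}(v'))$, which lands in $C\times_M V$ because $\pi\psi^{-1}=\pi'$ and $\tau\zeta^{-1}=\tau'$. Diagrammatically, this is the pasting of pullbacks shown in the figure, where each isomorphism pulls back to an isomorphism. Once $\xi$ is invertible, the preceding Proposition shows that the product morphism is $D_1$-cocartesian.

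For $d_1$, recall that $com\text{-}\mathrm{XSExt}_M$ is closed under finite products in $\mathrm{XSExt}_M$ and that $d_1$ is the restriction of $D_1$. The cocartesian morphisms for $d_1$ are again exactly those with invertible $\psi$: push forwards stay in $com\text{-}\mathrm{XSExt}_M$ by the Remark, and the same argument as in the Proposition applies. So the same computation works. Transported through the equivalence $F$ of Theorem~\ref{thm:equivalence_RSCat-XSExt}, this becomes the statement that products of $\mathcal{O}$-invertible morphisms are $\mathcal{O}$-invertible. The only mild obstacle is confirming that the product morphism really has third component $\xi$, and this follows from uniqueness in the universal property of the pullback $C'\times_M V'$.
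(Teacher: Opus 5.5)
Your proposal is correct and follows the paper's own argument. You reduce, via the characterisation of cocartesian morphisms (invertible third component, equivalently $\mathcal{O}$-invertibility for $d_1$), to showing that the induced map $\xi\colon C\times_M V\to C'\times_M V'$ is an isomorphism, which the paper obtains by pasting pullbacks along the isomorphisms $\psi$ and $\zeta$; your elementwise inverse and your check of the empty product simply make explicit what the paper leaves implicit.
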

Thus both functors $D_1,$ $d_1$ fall under the scope of Theorem \ref{thm:cofibration_monoidal}, and since $can\text{-}\mathcal{S}mod_M\cong\mathbf{CMon}(can\text{-}\mathcal{S}mod_M)$ (because every cancellative $M$-semimodule $(A,\eta)$ is an internal monoid in $\mathcal{S}mod_M$ under the monoid operation $+\colon A\times A\rightarrow A$ on $A$ itself) one concludes with the following:
\begin{corollary}
For any $(A,\eta)\in can\text{-}\mathcal{S}\mathrm{mod}_M,$ the fibres $D_1^{-1}(A,\eta)$ and $d_1^{-1}(A,\eta)$ admit a natural structure of symmetric monoidal category.
\end{corollary}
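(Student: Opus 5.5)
This corollary should follow by applying Theorem~\ref{thm:cofibration_monoidal} directly, once each of its hypotheses has been checked for $D_1$ and for $d_1$. I take $\mathcal{U}=\mathrm{XSExt}_M$ (respectively $com\text{-}\mathrm{XSExt}_M\cong\mathcal{R}\mathrm{SCat}(cc\text{-}SExt_M)$) and $\mathcal{V}=can\text{-}\mathcal{S}\mathrm{mod}_M$. The hypotheses are the following.
\begin{enumerate}
\item $\mathcal{U}$ has finite products. The terminal object is the trivial sequence $0\to0\to M\to M$. Binary products are given by \eqref{eqn:product_XES}, obtained by pulling back $\pi\times\pi'$ along $\Delta_M$. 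For $d_1$ we use that $com\text{-}\mathrm{XSExt}_M$ is closed under these products.
\item The functor is a cofibration. This is the preceding proposition and its corollary, with cocartesian lifts given by the push forward \eqref{eqn:pushforward}.
\item The functor preserves finite products. This is the proposition stating that $D_1$ and $d_1$ preserve finite products: the induced action on $A\times A'$ is component-wise.
\item Cocartesian morphisms are stable under finite products. This is the last proposition: a product of morphisms with invertible $\psi,\zeta$ again has an invertible middle component $\xi$.
\end{enumerate}

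The remaining input is an internal commutative monoid structure on the given object $(A,\eta)$ of $\mathcal{V}$. The only point needing care is that the addition $+\colon A\times A\to A$ is a morphism in $can\text{-}\mathcal{S}\mathrm{mod}_M$. This holds because $A$ is commutative, so $+$ is a monoid homomorphism, and each $\eta(m)$ is a monoid endomorphism, so $\eta(m)(a+b)=\eta(m)(a)+\eta(m)(b)$ and $+$ is equivariant for the product action. The unit $0\colon 0\to A$ is likewise equivariant, since $\eta(m)(0)=0$. Associativity, unitality and commutativity of the internal structure are inherited from $A$ itself, because products in $can\text{-}\mathcal{S}\mathrm{mod}_M$ are computed on underlying monoids. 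This gives the identification $can\text{-}\mathcal{S}\mathrm{mod}_M\cong\mathbf{CMon}(can\text{-}\mathcal{S}\mathrm{mod}_M)$ invoked just before the statement.

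Theorem~\ref{thm:cofibration_monoidal} then makes $D_1^{-1}(A,\eta)$ and $d_1^{-1}(A,\eta)$ monoidal categories, and the structure is symmetric because $(A,+,0)$ is commutative. Concretely, the tensor of $\mathcal{E}$ and $\mathcal{E}'$ in the fibre is the push forward of $\mathcal{E}\times\mathcal{E}'$ along $+\colon A\times A\to A$. The unit is the push forward of the terminal object along $0\to A$.

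For $d_1$ one could alternatively transport the structure from $com\text{-}\mathrm{XSExt}_M$ along the equivalence $F$ of Theorem~\ref{thm:equivalence_RSCat-XSExt}, since $d_1$ is defined through $F$ in \eqref{def:d_1}. This is not necessary, as the hypotheses have been verified directly. There is no real obstacle. The one subtle point is that $com\text{-}\mathrm{XSExt}_M$ must be closed in $\mathrm{XSExt}_M$ under both finite products and push forwards, so that the restricted functor is still a cofibration with the same cocartesian morphisms. This is guaranteed by the Remark following \eqref{eqn:costr_Q}.
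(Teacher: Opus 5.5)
Your proposal is correct and follows the paper's argument: you check the hypotheses of Theorem~\ref{thm:cofibration_monoidal} for $D_1$ and $d_1$ and combine them with the fact that $(A,\eta)$ is an internal commutative monoid in $can\text{-}\mathcal{S}\mathrm{mod}_M$ via $+$. Those hypotheses are finite products in the domain, the cofibration property via push forwards, preservation of finite products, and stability of cocartesian morphisms under products; your extra checks (equivariance of $+$, closure of $com\text{-}\mathrm{XSExt}_M$ under push forwards) only make explicit what the paper leaves implicit.
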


\end{document}